\DeclarePairedDelimiter\ceil{\lceil}{\rceil}
\theoremstyle{plain}
\newtheorem{thm}{Theorem}[section]
\newtheorem{cor}{Corollary}[section]
\newtheorem{prop}{Proposition}
\newtheorem{lem}{Lemma}[section]
\newtheorem{thmx}{Theorem}
\theoremstyle{definition}
\newtheorem{defn}{Definition}[section]
\theoremstyle{remark}
\newtheorem{rem}{\textit{Remark}}[section]
\numberwithin{equation}{section}
\let\c@equation\c@thm
\numberwithin{equation}{section}
\DeclareMathOperator{\dist}{\mathrm{dist}}
\DeclareMathOperator{\supp}{\mathrm{supp}}
\newcommand\underrel[3][]{\mathrel{\mathop{#3}\limits_{%
			\ifx c#1\relax\mathclap{#2}\else#2\fi}}}
\title[Zakharov-Kuznetsov  Equation]{On the propagation of regularity for solutions of the Zakharov-Kuznetsov  equation}
\author{Argenis. J.  Mendez}
\address{Centro de  Modelamiento Matemático, Universidad de Chile, Santiago de Chile.}
\email{amendez@dim.uchile.cl}
\thanks{}
\subjclass{Primary: 35Q53. Secondary: 35Q05}
\keywords{Zakharov-Kuznetsov. Smoothing effect. Propagation of regularity. Half spaces}	
\date{August, 2020.}
\begin{document}

\begin{abstract}
In this work, we study some special properties of smoothness concerning to the initial value problem associated with the Zakharov-Kuznetsov-(ZK) equation in the $n-$ dimensional setting, $n\geq 2.$

It is known that the solutions of the ZK equation in the $2d$ and $3d$ cases verify special regularity properties. More precisely, the regularity of the initial data on a family of half-spaces propagates with infinite speed. Our objective in this work is to extend this analysis to the case in that the regularity of the initial data is measured on a fractional scale.
  To describe this phenomenon we present  new  localization formulas  that allow us to  portray  the regularity of the solution on a certain class of subsets of the euclidean space.
\end{abstract}

\maketitle


\section{Introduction}
In this work we  are  interested in to describe some regularity  properties of    solutions  to the initial value problem (IVP) associated to  the  \emph{Zakharov-Kuznetsov } (ZK) equation
\begin{equation}\label{zk4}
\left\{
\begin{array}{ll}
\partial_{t}u+\partial_{x_{1}}\Delta u+u\partial_{x_{1}}u=0, &\,t\in\mathbb{R} \\
u(x,0)=u_{0}(x),&x=(x_{1},x_{2},\dots,x_{n})\in \mathbb{R}^{n} ,  n\geq 2, \\
\end{array} 
\right.
\end{equation}
where $\Delta=\partial_{x_{1}}^{2}+\partial_{x_{2}}^{2}+\cdots+\partial_{x_{n}}^{2}$ is the \emph{ n-dimensional  Laplacian}.  

This equation  was deduced  by Zakharov and Kuznetsov  \cite{ZK} to describe the ionic-acoustic  waves uniformly magnetized plasma  in the two dimensional and three dimensional cases. More precisely, this equation was derived  as a long  wave small amplitude limit  of the Euler-Poisson system in the ''cold-plasma'' approximation. Later on, this  long wave limit was  rigorously  described  by Lannes, Linares and Saut \cite{LLS}.  Also the ZK equation  has been derived from the  Vlasov-Poisson system  in a combined  cold ions and long wave limit by Kwan \cite{HK}. 

The ZK equation  has a Hamiltonian structure  and  it has at least   formally  three conserved quantities,  namely
\begin{equation*}
\mathcal{I}_{1}[u](t)=\int_{\mathbb{R}^{n}} u(x,t)\,\mathrm{d}x=\mathcal{I}_{1}[u](0),\,\, \mathcal{I}_{2}[u](t)=\int_{\mathbb{R}^{n}} (u(x,t))^{2}\,\mathrm{d}x= \mathcal{I}_{2}[u](0)
\end{equation*}
and
\begin{equation*}
\mathcal{I}_{3}[u](t)=\frac{1}{2}\int_{\mathbb{R}^{n}}\left|\nabla u(x,t)\right|^{2}\,\mathrm{d}x-\frac{1}{3}\int_{\mathbb{R}^{n}}(u(x,t))^{3}\,\mathrm{d}x =\mathcal{I}_{3}[u](0).
\end{equation*}
Due to its  physical relevance the ZK equation  have called the attention  in the recent years. Nevertheless, before describe the main goal in this work we require to describe the space solution  where the property to be described has  sense  from the mathematical point of view.  In this  direction,  we   give a brief description of the  Initial Value Problem (IVP) issues  associated to  \eqref{zk4}.

Since   
the IVP  for the ZK equation  have been  broadly studied in the recent years, the ZK literature have been increasing more and more.  So that,  trying  to describe  the major part of the results associated to \eqref{zk4} is a difficult task. Thus, we    present  a short review that summarize the IVP   issues   according to the physical  dimension. In the particular case that   the physical  dimension is  $n=2,$
Faminskii \cite{Fami} proved global well-posedness in $H^{j}(\mathbb{R}^{2}),\,j\in \mathbb{Z}^{+},\,j\geq 1,$ later 
 Linares and Pastor \cite{LPAS1}  proved global well-posedness in $H^{s}(\mathbb{R}^{2}), s>3/4,$  additionally and  independently   and simultaneously, Gr\"{u}nrock and Herr \cite{grum3} and Molinet and Pilod \cite{MP} proved local well posedness in $H^{\frac{1}{2}+}(\mathbb{R}^{2}).$  Additionally,     it was proved  recently Kinoshita \cite{Kinoshita} local well posedness in $H^{-\frac{1}{4}+}(\mathbb{R}^{2})$   that   according to the scaling argument  it is optimal up to the end-point.

Concerning  to the case in which the  dimension  is  $n=3,$    Linares and Saut \cite{LS} proved local well-posedness in $H^{s}(\mathbb{R}^{3}),s>9/8.$ Later, \cite{LS} Ribaud and Vento \cite{RV1}  proved local well-posedness in  $H^{s}(\mathbb{R}^{2}),s>1,$ 
Molinet and Pilod \cite{MP}   prove global that the results in  \cite{RV1}  can be extended  globally in time. More recently,   Herr and Kinoshita \cite{HERRK}  have  proved   that for dimension $n\geq 3,$  the IVP   associated to \eqref{zk4} is locally well-posedness on $H^{s}(\mathbb{R}^{n}),s>\frac{n}{2}-2.$ Additionally, Herr and Kinoshita \cite{HERRK} also establish that in dimension $n=3$  and for real solutions,  the IVP  associated to \eqref{zk4} is globally well posed in $L^{2}(\mathbb{R}^{3})$  and in dimension $n=4$ 
it is globally well-posed for real-valued initial data in $H^{1}
(\mathbb{R}^{4})$ 
with sufficiently small $L^{2}(\mathbb{R}^{4})$-norm.

Although  of the improvements concerning to the local and global theory,  the  regularity properties associated to the solutions of \eqref{zk4} we intend to describe in this work      depends strongly on  having a priori- estimates for  $\|\nabla u\|_{L^{1}_{T}L^{\infty}}<\infty.$ So that,    before  we  firstly  describe the  space where the properties  to be described have sense. We recall a result  that  becomes a direct  consequence  of combining  energy estimates, the commutator estimates in \cite{KATOP2}, the Sobolev embedding and  the arguments in \cite{BS}.
\begin{thm}\label{t2}
	Given $u_{0}\in H^{s}(\mathbb{R}^{n})$ with $s>\frac{n}{2}+1,$ there exist  $T=T\left(\|u_{0}\|_{H^{s}}\right)>0,$ and a unique solution  $u=u(x,t)$ of the IVP \ref{zk4} such that 
	\begin{equation}
	u\in C\left([0,T]:H^{s}(\mathbb{R}^{n})\right).
	\end{equation}
	Moreover, the map data-solution $u_{0}\longmapsto u(x,t)$ from $H^{s}(\mathbb{R}^{n})$ into $C\left([0,T]:\right.$ $\left. H^{s}(\mathbb{R}^{n})\right)$  is locally  continuous.
\end{thm}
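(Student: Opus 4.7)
The plan is to follow a standard energy-method approach combined with Bona--Smith approximation, which is exactly the strategy alluded to in the paragraph preceding the statement. The linear part $\partial_{x_1}\Delta$ is skew-adjoint, so at the level of $L^2$ energy the dispersive term gives no contribution, and the nonlinearity $u\partial_{x_1}u$ is antisymmetric enough to be controlled by a Kato--Ponce commutator bound.

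First, I would set up a family of regularized problems, for instance by parabolic regularization $\partial_t u^\varepsilon + \partial_{x_1}\Delta u^\varepsilon + u^\varepsilon \partial_{x_1} u^\varepsilon = -\varepsilon \Delta^2 u^\varepsilon$, or equivalently by mollifying the initial data and applying Picard iteration in a sufficiently smooth space to get a smooth local solution $u^\varepsilon$. Next, applying $J^s = (1-\Delta)^{s/2}$, multiplying by $J^s u^\varepsilon$, and integrating over $\mathbb{R}^n$, the dispersive term vanishes by integration by parts, while for the nonlinearity one writes
\begin{equation*}
\int_{\mathbb{R}^n} J^s(u\partial_{x_1}u)\,J^s u\, dx = \int_{\mathbb{R}^n} [J^s,u]\partial_{x_1}u\cdot J^s u\, dx + \int_{\mathbb{R}^n} u\,J^s\partial_{x_1}u\cdot J^s u\, dx,
\end{equation*}
and the second integral equals $-\tfrac12\int (\partial_{x_1}u)(J^s u)^2\,dx$ after integration by parts. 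Both terms are bounded by $\|\nabla u\|_{L^\infty}\|u\|_{H^s}^2$ after using the Kato--Ponce commutator estimate from \cite{KATOP2}. Because $s>\frac{n}{2}+1$, Sobolev embedding gives $\|\nabla u\|_{L^\infty}\lesssim \|u\|_{H^s}$, so one obtains the differential inequality $\frac{d}{dt}\|u^\varepsilon\|_{H^s}^2 \lesssim \|u^\varepsilon\|_{H^s}^3$, which by Gronwall yields a uniform-in-$\varepsilon$ a priori bound on a time interval $[0,T]$ with $T=T(\|u_0\|_{H^s})>0$.

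With uniform $H^s$ bounds in hand, I would pass to the limit $\varepsilon\to 0^+$ to recover a solution $u\in L^\infty([0,T];H^s(\mathbb{R}^n))$ using standard compactness arguments, and upgrade to $C([0,T];H^s)$ via the Bona--Smith method of \cite{BS}: approximate $u_0$ by a family $u_0^\delta = \rho_\delta * u_0 \in H^{s+2}$, compare two solutions $u^\delta$ and $u^{\delta'}$ using energy estimates at the levels $L^2$ and $H^s$ together with an interpolation step at level $H^{s+1}$, and show that $u^\delta \to u$ in $C([0,T];H^s)$. The same differences-of-solutions estimate gives uniqueness and local continuity of the flow map $u_0 \mapsto u$.

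The main technical obstacle is controlling the nonlinear term in the $H^s$ energy estimate uniformly in the regularization parameter, which forces the precise use of the Kato--Ponce commutator estimate and the requirement $s>\frac{n}{2}+1$ so that $\|\nabla u\|_{L^\infty}$ is controlled by $\|u\|_{H^s}$; closing the Bona--Smith continuity argument is the other delicate point, since one has to absorb a derivative loss using the extra smoothness of $u_0^\delta$. Both steps, however, are by now standard for quasilinear dispersive equations of this type, so no new ideas beyond those already cited in \cite{KATOP2} and \cite{BS} are needed.
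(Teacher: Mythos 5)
Your proposal is correct and follows exactly the route the paper itself indicates: the paper states Theorem \ref{t2} as a direct consequence of energy estimates, the Kato--Ponce commutator estimates of \cite{KATOP2}, Sobolev embedding, and the Bona--Smith arguments of \cite{BS}, which is precisely the regularize--estimate--Gronwall--Bona--Smith scheme you lay out. No genuinely different idea is needed or used.
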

The Theorem \ref{t2} is the  basis space  to describe the  properties we intend to describe in this work since it allow us to guarantee that  $\nabla u \in C\left([0,T]: H^{s-1}(\mathbb{R}^{n})\right)\subset L^{1}\left([0,T]: L^{\infty}(\mathbb{R}^{n})\right).$

Since the space  solution has been set   we proceed to   establish the main goal  of this  work, that  is mainly based in to extend the study of propagation of regularity  found by Linares and Ponce \cite{LPZK} in solutions of the ZK equation in the $2-$dimensional case as well as in the $3d-$case to a more general context where the regularity to be considered be  fractional. Roughly speaking, the propagation of regularity phenomena  describe the behavior of the regularity  of the solution  when the initial data enjoy of some  extra smoothness on a  particular class of  subsets of the physical space. Specifically, this class of sets  are   strips and half-spaces, that in our work  will be   indicated according to the following notation: For  
   $\sigma$     a non-null vector in $ \mathbb{R}^{n}$ and  $\alpha\in \mathbb{R}$  the \emph{half-space} $\mathcal{H}_{\{\sigma,\alpha\}}$  will be indicated by   
\begin{equation*}
\mathcal{H}_{\{\sigma,\alpha\}}:=\left\{x\in \mathbb{R}^{n}\,|\, \sigma\cdot x>\alpha \right\},
\end{equation*} 
where $\cdot$ denotes the canonical inner product in $\mathbb{R}^{n}.$
Additionally,  for $\gamma, \beta\in\mathbb{R}$ with  $\gamma<\beta$ we define the  \emph{strip}
\begin{equation*}
\mathcal{Q}_{\{\sigma,\alpha,\beta\}}:=\left\{x\in \mathbb{R}^{n}\,|\, \gamma< \sigma\cdot x< \beta\right\}.
\end{equation*}
Formally, the description of the propagation of regularity phenomena  in solutions of the ZK equation is summarized in the following theorem.
\begin{thm}[\cite{LPZK}]\label{t1}
	Let $u_{0}\in H^{\frac{5}{2}+}(\mathbb{R}^{3}).$  If for some $\sigma=\left(\sigma_{1},\sigma_{2},\sigma_{3}\right)\in \mathbb{R}^{3}$ with 
	\begin{equation*}
	\sigma_{1}>0,\quad \sigma_{2},\sigma_{3}\geq 0\qquad \mbox{and}\qquad \sqrt{3}\sigma_{1}>\sqrt{\sigma_{2}^{2}+\sigma_{3}^{2}},
	\end{equation*}
	and for some  $j\in\mathbb{Z}^{+},\, j\geq 3$
	\begin{equation}\label{a2}
	\mathcal{N}_{j}:=\sum_{|\alpha|=j}\int_{\mathcal{H}_{\{\sigma,\beta\}}}\left(\partial_{x}^{\alpha}u_{0}(x)\right)^{2}\,\mathrm{d}x<\infty,
	\end{equation}
	then the corresponding solution of the IVP for the ZK equation \eqref{zk4} satisfies that for any $\nu\geq0,\ \epsilon>0$ and $\tau>4\epsilon$
	\begin{equation}\label{d1}
	\begin{split}
	&\sup_{0\leq t\leq T}\sum_{|\alpha|\leq j}\int_{\mathcal{H}_{\{\sigma,\beta-\nu t+\epsilon\}}}\left(\partial_{x}^{\alpha}u(x,t)\right)^{2}\,\mathrm{d}x\\
	&+\sum_{|\alpha|=j+1}\int_{0}^{T}\int_{\mathcal{Q}_{\{\sigma,\beta-\nu t+\epsilon, \beta-\nu t+\tau\}}}\left(\partial_{x}^{\alpha}u(x,t)\right)^{2}\,\mathrm{d}x\,\mathrm{d}t\\
	&\leq c=c\left(\|u_{0}\|_{H^{s}}; \left\{\mathcal{N}_{l}:\, 1\leq l\leq j\right\}; j;\sigma;\nu ;T;\epsilon;\tau \right).
	\end{split}
	\end{equation}
\end{thm}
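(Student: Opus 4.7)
The plan is a weighted $L^2$ energy argument applied to $\partial_x^\alpha u$ for all multi-indices $|\alpha|\le j$, with a weight approximating the characteristic function of a shrinking family of half-spaces. Fix a smooth nondecreasing cutoff $\chi\in C^\infty(\mathbb{R})$ with $\chi\equiv 0$ on $(-\infty,\epsilon/4]$ and $\chi\equiv 1$ on $[\epsilon/2,\infty)$, chosen so that $(\chi')^{1/2}$ is also smooth and $\supp\chi'$ lies in $[\epsilon/4,\epsilon/2]$, and set
\begin{equation*}
\phi(x,t):=\chi\bigl(\sigma\cdot x-\beta+\nu t+\epsilon\bigr),
\end{equation*}
together with a companion cutoff localizing in the strip $\mathcal{Q}_{\{\sigma,\beta-\nu t+\epsilon,\beta-\nu t+\tau\}}$. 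The argument proceeds by finite induction on $m=|\alpha|$, starting from the conserved case $m=0$; at step $m$ the $(m-1)$-st conclusion is applied with a slightly smaller parameter $\epsilon_{m-1}<\epsilon$, so that the lower-order bound already holds on a strictly larger half-space than the one being treated.

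Applying $\partial_x^\alpha$ to \eqref{zk4}, multiplying by $2\partial_x^\alpha u\cdot\phi$, and integrating over $\mathbb{R}^3$, the time derivative produces $\tfrac{d}{dt}\!\int(\partial_x^\alpha u)^2\phi$ together with $-\nu\int(\partial_x^\alpha u)^2\chi'$, which has favorable sign. The linear contribution $\int\partial_x^\alpha\partial_{x_1}\Delta u\cdot\partial_x^\alpha u\cdot\phi$ reduces after three integrations by parts to
\begin{equation*}
\int_{\mathbb{R}^3}Q\bigl(\nabla\partial_x^\alpha u\bigr)\,\chi'\,\mathrm{d}x \;-\; \frac{\sigma_1|\sigma|^2}{2}\int_{\mathbb{R}^3}(\partial_x^\alpha u)^2\chi'''\,\mathrm{d}x,
\end{equation*}
where $Q(v)=3\sigma_1 v_1^2+\sigma_1 v_2^2+\sigma_1 v_3^2+2\sigma_2 v_1v_2+2\sigma_3 v_1v_3$ has symmetric matrix whose successive principal minors are $3\sigma_1$, $3\sigma_1^2-\sigma_2^2$, and $\sigma_1(3\sigma_1^2-\sigma_2^2-\sigma_3^2)$. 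These are all positive exactly under the hypotheses $\sigma_1>0$ and $\sqrt{3}\sigma_1>\sqrt{\sigma_2^2+\sigma_3^2}$, so $Q$ is positive definite and yields the promised one-derivative gain integrated against $\chi'$ on $\supp\chi'$, which is what produces the strip-integrated second term in \eqref{d1}. The remainder with $\chi'''$, localized in the same strip, is controlled by taking $\nu$ large enough that $-\nu\int(\partial_x^\alpha u)^2\chi'$ dominates it.

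Leibniz expansion of the nonlinear contribution $\int\partial_x^\alpha(u\partial_{x_1}u)\cdot\partial_x^\alpha u\cdot\phi$ produces first the resonant piece $\int u\,\partial_{x_1}\partial_x^\alpha u\cdot\partial_x^\alpha u\cdot\phi$, which integrates by parts to $-\tfrac12\int(\partial_x^\alpha u)^2\partial_{x_1}(u\phi)$ and is bounded by $\|\partial_{x_1}u\|_{L^\infty}\int(\partial_x^\alpha u)^2\phi$ (ready for Gronwall in view of Theorem \ref{t2}) plus a $\chi'$-localized transport term absorbed into the $-\nu\int(\partial_x^\alpha u)^2\chi'$ reservoir. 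The remaining commutator sum $\sum c_\beta\int\partial_x^\beta u\cdot\partial_x^{\alpha-\beta+e_1}u\cdot\partial_x^\alpha u\cdot\phi$ with $0<|\beta|<|\alpha|$ splits by derivative counting: when one of the two factors carries few derivatives it is placed in $L^\infty$ via Sobolev embedding ($s>n/2+1$), leaving an $L^2\cdot L^2\cdot\phi$ pairing handled by the inductive hypothesis on the enlarged half-space $\mathcal{H}_{\{\sigma,\beta-\nu t+\epsilon_{m-1}\}}$, while the borderline case is routed through Cauchy--Schwarz against the positive $Q$-term.

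The principal obstacle is this last borderline commutator: here a middle-order factor sits on $\supp\chi'$, and one must trade a controlled fraction of the positive $\int Q(\nabla\partial_x^\alpha u)\chi'$ by Young's inequality, paying attention to how the constants degenerate as the nested parameters $\epsilon_{m-1}<\epsilon$ are arranged. Once this cutoff hierarchy and the induced constant bookkeeping are set, integrating in $t\in[0,T]$ and applying Gronwall at each induction step closes the estimate at level $m$ and, iterated up to $m=j$, delivers \eqref{d1}.
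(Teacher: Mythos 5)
The overall architecture — a weighted $L^2$ energy identity for $\partial_x^\alpha u$ with a moving cutoff depending on $\sigma\cdot x + \nu t$, finite induction on $m=|\alpha|$, positivity of the quadratic form $Q$ (your principal-minor computation is exactly right and matches the constraint $\sqrt{3}\sigma_1>\sqrt{\sigma_2^2+\sigma_3^2}$), and Kato--Ponce/Leibniz splitting of the nonlinearity — is the approach of Linares--Ponce \cite{LPZK} and is the integer-order skeleton of the proof of Theorem \ref{zk9} in this paper. However, there is a genuine gap in the sign and absorption analysis of the cutoff-localized error terms, and it is precisely where the Kato smoothing effect must enter as an \emph{input}, not merely as an output.

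With your weight $\phi=\chi(\sigma\cdot x-\beta+\nu t+\epsilon)$ and $\chi$ nondecreasing, one has $\partial_t\phi=\nu\chi'\ge0$, so the time-derivative piece in the energy identity is
\[
\int\partial_t\bigl((\partial_x^\alpha u)^2\bigr)\phi\,\mathrm{d}x
=\frac{\mathrm{d}}{\mathrm{d}t}\int(\partial_x^\alpha u)^2\phi\,\mathrm{d}x-\nu\int(\partial_x^\alpha u)^2\chi'\,\mathrm{d}x,
\]
and after moving the good term $\int Q(\nabla\partial_x^\alpha u)\chi'$ to the left, the quantity $\nu\int(\partial_x^\alpha u)^2\chi'$ sits on the \emph{wrong} side of the inequality: it is a growth term, not a reservoir. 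Declaring it ``of favorable sign'' and then absorbing both the $\chi'''$-term and the $\chi'$-localized transport piece of the nonlinearity into it fails on two counts: (a) there is nothing to absorb into, and (b) even if there were, ``taking $\nu$ large enough'' directly contradicts the statement, which asserts the result for all $\nu\ge0$ including $\nu=0$, where the putative reservoir vanishes identically. The $\chi'''$-term also cannot in general be dominated pointwise by a multiple of $\chi'$, so the pointwise absorption would be problematic even with the right sign. (There is in addition a small parameter slip: with your normalizations $\phi(\cdot,0)$ is not supported in $\mathcal H_{\{\sigma,\beta\}}$, so the initial-data term is not controlled by $\mathcal N_j$; this is easily fixed by shifting $\chi$.)

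The missing idea, and the one that both \cite{LPZK} and the present paper use, is to treat every $\chi'$-, $\chi''$-, or $\chi'''$-localized term as an $L^1_T$ quantity and bound it by the \emph{local smoothing estimate furnished by the previous induction step}. Concretely: once step $m-1$ is closed, one has $\int_0^T\int_{\mathcal{Q}_{\{\sigma,\epsilon'-\nu t,\tau'-\nu t\}}}|\partial_x^{\alpha'} u|^2\,\mathrm{d}x\,\mathrm{d}t<\infty$ for $|\alpha'|\le m$ and suitably nested parameters $\epsilon'<\epsilon$, $\tau'>\tau$. Since $\chi'$, $\chi'''$ are supported in exactly such a strip, and since the $\chi'$-localized nonlinear remainder involves derivatives of order at most $m$, all of these error integrals are finite after integrating in $t$. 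The positive term $\int Q(\nabla\partial_x^\alpha u)\chi'$ at step $m$ then gives the new smoothing at order $m+1$ on the strip, which feeds the next step. In the base case $m\le s$ the global a priori bound $\|u\|_{L^\infty_T H^s}$ from Theorem \ref{t2} supplies the needed $L^1_T$ control, cf.\ Lemma \ref{se}. Without this two-way role of the smoothing effect (consume at the current step, produce for the next), the induction does not close; that is the substantive gap in your proposal.
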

\begin{rem}
	Similar results holds in the $2-$dimensional  case, for a more detailed description see \cite{LPZK}.
\end{rem}
The property described in Theorem \ref{t1}  is inherent  to some nonlinear  dispersive models, e.g  in the one dimensional case this issue have  been verified in solutions of the KdV and the Benjamin-Ono equation by Isaza, Linares, Ponce  see \cite{ILP1} and \cite{ILP2} resp. Also, Kenig, Linares, Ponce and Vega \cite{KLPV}  studied this subject in solutions of the KdV by considering initial data with   fractional regularity. Additionally, combining the approach in \cite{ILP2} and \cite{KLPV}  it was verified that   the dispersive generalized Benjamin-Ono \cite{AM1} as well as the fractional KdV equation \cite{AM2} also satisfy this property. Recently,  Mu\~{n}oz, Ponce and Saut  \cite{MPS} proved  that the solutions   of the Intermediate long-wave equation satisfy this property.

As shows  Theorem \ref{t1}, this property is not only  inherent to  one-dimensional nonlinear dispersive models, but on the contrary, its validity has been established  in multidimensional  nonlinear   dispersive  models such as:  Kadomtsev-Petviashvili II (KP-II) (see Isaza, Linares, Ponce \cite {ILP3}) as well as in the fifth order   Kadomtsev-Petviashvili II (KP5-II)  and the Benjamin-Ono-Zakharov-Kuznetsov (BO-ZK) equations  by   Nascimento \cite{Nac1}, \cite{Nac2} resp.

Note that the case in which  the additional regularity of  the initial data on the half-space $\mathcal{H}_{\{\sigma,\beta\}}$ (see \eqref{a2})  is given on a  fractional scale,  it does not fall under the scope of Theorem \ref{t1}.  
The study of this case constitutes the  main objective in  this work. More precisely, we managed to show that even in the case in which the additional regularity of the data is measured on a fractional scale this  is propagated with infinite speed.

In summary, our main result  reads as follows:
\begin{thmx}\label{zk9}
	Let $u_{0}\in H^{s}(\mathbb{R}^{n})$ with $s>s_{n}:=\frac{n+2}{2}.$  If for some $\sigma=(\sigma_{1},\sigma_{2},\dots,\sigma_{n})\in \mathbb{R}^{n},\, n\geq 2$  with
	\begin{equation}
	\sigma_{1}>0,\,\, \sigma_{2},\dots,\sigma_{n}\geq 0\qquad \mbox{and}\qquad \sqrt{\sigma_{2}^{2}+\sigma_{3}^{2}+\dots+\sigma_{n}^{2}}<\sqrt{3}\sigma_{1},
	\end{equation}
	and for some $s\in \mathbb{R},s>s_{n}$
	\begin{equation}\label{a1}
	\left\|J^{s}u_{0}\right\|_{L^{2}\left(\mathcal{H}_{\{\sigma,\beta\}}\right)}<\infty 
	\end{equation}	
	then the corresponding solution  $u=u(x,t)$ of the IVP \eqref{zk4} satisfies:  For any $\nu \geq 0,\, \epsilon>0$ and $\tau\geq5 \epsilon$ 
	\begin{equation}\label{g1}
	\sup_{0\leq t\leq T}\int_{\mathcal{H}_{\{\sigma,\beta+\epsilon-\nu t\}}}\left(J^{r}u(x,t)\right)^{2}\mathrm{d}x\leq c^{*}
	\end{equation} 	
	for any $r\in (0,s]$ with $c^{*}=c^{*}\left(\epsilon; \sigma;T; \nu ; \|u_{0}\|_{H^{s_{n}+}}; \|J^{s}u_{0}\|_{L^{2}\left(\mathcal{H}_{\{\sigma,\beta\}}\right)}\right)>0.$
	
	In addition, for any $\nu\geq 0,\, \epsilon>0$ and $\tau\geq 5\epsilon$
	\begin{equation}\label{g2.1}
	\int_{0}^{T}\int_{\mathcal{Q}_{\{\sigma ,\epsilon-\nu t+\beta ,\tau -\nu t+\beta}\}}\left(J^{s+1}u(x,t)\right)^{2}\,\mathrm{d}x\,\mathrm{d}t\leq c^{*},
	\end{equation}
	with $c^{*}=c^{*}\left(\epsilon;\tau; \sigma; T; \nu ; \|u_{0}\|_{H^{s_{n}{+}}}; \|J^{s}u_{0}\|_{L^{2}(\mathcal{H}_{\{\sigma,\beta\}})}\right)>0.$
\end{thmx}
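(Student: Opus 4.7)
The plan is to run a weighted energy estimate on $J^{s}u$ against a smooth cutoff adapted to the moving half-space. Let $\phi(x,t):=\chi_{\epsilon,\tau}\bigl(\sigma\cdot x-\nu t+\beta\bigr)$, where $\chi_{\epsilon,\tau}$ is a non-decreasing smooth approximation of the indicator of $\{y>\epsilon\}$ whose derivative is essentially supported on the interval $(\epsilon,\tau)$. Apply $J^{s}$ to \eqref{zk4}, multiply by $2\,J^{s}u\cdot\phi$, and integrate in $x$ to obtain an identity of the form
\begin{equation*}
\frac{d}{dt}\int_{\mathbb{R}^{n}}(J^{s}u)^{2}\phi\,dx+\int_{\mathbb{R}^{n}}Q(\sigma;\nabla J^{s}u)\,\phi'\,dx=\mathcal{R}(u,\phi),
\end{equation*}
where the second integral arises from integrating the dispersive term $\partial_{x_{1}}\Delta(J^{s}u)\cdot J^{s}u\cdot\phi$ by parts, and $Q(\sigma;\cdot)$ is the quadratic form $\tfrac{3\sigma_{1}}{2}v_{1}^{2}+\tfrac{\sigma_{1}}{2}(v_{2}^{2}+\cdots+v_{n}^{2})+\sum_{j\geq 2}\sigma_{j}v_{1}v_{j}$. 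A completion of squares shows that $Q$ is positive-definite exactly when $3\sigma_{1}^{2}>\sigma_{2}^{2}+\cdots+\sigma_{n}^{2}$, which is the origin of the cone condition in the hypotheses. Integrating in $t\in[0,T]$ will then yield simultaneously the smoothing estimate \eqref{g2.1} (from the $Q$-term) and, via Gr\"onwall applied to the first integral, the uniform-in-time bound \eqref{g1}.

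The remainder $\mathcal{R}(u,\phi)$ collects the drift $\nu\int(J^{s}u)^{2}\phi'\,dx$, which is harmless, lower-order contributions involving $\phi'''$, and the nonlinearity $-2\int J^{s}(u\partial_{x_{1}}u)\,J^{s}u\,\phi\,dx$. The latter is decomposed as
\begin{equation*}
-2\int u\,\partial_{x_{1}}(J^{s}u)\cdot J^{s}u\,\phi\,dx-2\int\bigl[J^{s},u\bigr]\partial_{x_{1}}u\cdot J^{s}u\,\phi\,dx.
\end{equation*}
The first term, after one integration by parts, produces expressions controlled either by $\|\partial_{x_{1}}u\|_{L^{\infty}}$, which is in $L^{1}_{T}$ by Theorem \ref{t2} and the Sobolev embedding since $s>s_{n}$, or absorbed into the smoothing term through the cone condition together with $\|u\|_{L^{\infty}}\lesssim\|u_{0}\|_{H^{s_{n}+}}$. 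The second commutator is handled by the Kato--Ponce estimate. In addition, whenever $J^{s}$ must be moved across the cutoff one encounters genuinely fractional commutators $[J^{s},\phi]$; following Kenig--Linares--Ponce--Vega \cite{KLPV}, we exploit that $\phi$ depends only on the single variable $\sigma\cdot x$, so that after a linear change of coordinates turning $\sigma/|\sigma|$ into a coordinate axis the needed bounds reduce to one-dimensional commutator inequalities of Stein type.

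Because each commutator costs essentially one derivative, the argument must be run inductively in $s$. As a preliminary step, we extend the integer-scale Theorem \ref{t1} of Linares--Ponce to arbitrary dimension $n\geq 2$ under the same cone condition, verifying by induction on $j$ that the energy method closes for integer $s=j$. With this in hand, the fractional range is reached in one more step, and the remaining statement for arbitrary $r\in(0,s]$ follows by complex interpolation between consecutive integer/fractional levels. At each level one employs a nested family of cutoffs with parameters $\epsilon_{k},\tau_{k}$ shrinking in $k$, in the spirit of the onion-layer arguments of \cite{ILP1} and \cite{KLPV}; the constraint $\tau\geq 5\epsilon$ is precisely what leaves enough room for the several commutator layers needed to close the induction.

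The principal obstacle will be the rigorous control of the fractional weighted commutator
\begin{equation*}
\int\bigl[J^{s},\phi\bigr](u\,\partial_{x_{1}}u)\cdot J^{s}u\,dx
\end{equation*}
and its analogues produced when $J^{s}$ is moved across $\phi$ in the dispersive contribution. Since $\phi$ is adapted to a half-space and is therefore not compactly supported, naive commutator bounds fail; the saving point is that $\phi'$ \emph{is} localized on a strip of width $O(\tau-\epsilon)$, which, combined with the a priori bounds of Theorem \ref{t2} and the already-established lower-order propagation, ultimately forces the fractional estimates to close — with constants depending only on the data listed in the statement of Theorem~A.
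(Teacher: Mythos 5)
Your high-level plan---weighted energy estimate on $J^{s}u$ against a cutoff $\chi_{\epsilon,\tau}(\sigma\cdot x+\nu t-\beta)$, positivity of the dispersive quadratic form under the cone condition $3\sigma_{1}^{2}>\sigma_{2}^{2}+\cdots+\sigma_{n}^{2}$, Kato--Ponce commutator estimates, Gr\"onwall, and an induction across levels of regularity---is the same skeleton the paper uses, and your completion-of-squares analysis of $Q$ is correct. (As a small remark, the moving argument should be $\sigma\cdot x+\nu t-\beta$, not $\sigma\cdot x-\nu t+\beta$, if the half-space $\mathcal{H}_{\{\sigma,\beta+\epsilon-\nu t\}}$ is to expand as $t$ increases.) But two of the steps you describe have genuine gaps.

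\textbf{The nonlinear commutator does not close with your decomposition.} You propose to write the nonlinearity as
$-2\int u\,\partial_{x_{1}}(J^{s}u)\,J^{s}u\,\phi\,dx-2\int[J^{s},u]\partial_{x_{1}}u\,J^{s}u\,\phi\,dx$
and control the second piece by Kato--Ponce. The Kato--Ponce estimate \eqref{zk2} yields a bound by $\|\nabla u\|_{L^{\infty}}\|J^{s}u\|_{L^{2}}$ (plus symmetric terms), and the second factor is the \emph{global} $H^{s}$ norm. But the hypothesis only gives $u_{0}\in H^{s_{n}+}(\mathbb{R}^{n})$ globally and $J^{s}u_{0}\in L^{2}$ on a half-space; $\|J^{s}u(t)\|_{L^{2}(\mathbb{R}^{n})}$ need not be finite. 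This is precisely why the paper does not split the nonlinearity this way: it instead inserts a smooth quadratic partition of unity $\chi_{\epsilon,\tau}^{2}+\widetilde{\phi_{\epsilon,\tau}}^{2}+\psi_{\epsilon}=1$ (and the linear one $\chi+\phi+\psi=1$) adapted to the moving strip, expands $\chi_{\epsilon,\tau,\sigma}J^{s}(u\partial_{x_{1}}u)$ into the localized pieces $B_{6,1},\ldots,B_{6,8}$, and arranges that every application of $J^{s}$ lands on a \emph{localized} quantity $u\chi$, $u\widetilde{\phi}$, or $u\psi$, all of whose $L^{2}$ norms are controllable via the localization lemmas (Lemma~\ref{A} and Lemma~\ref{lemm}). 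Without this finer decomposition the fractional energy estimate does not close.

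\textbf{The claimed one-dimensional reduction of $[J^{s},\phi]$ is not available.} You argue that, because $\phi$ depends only on $\sigma\cdot x$, a rotation reduces the commutator $[J^{s},\phi]$ to a one-dimensional Stein-type commutator. After the rotation the cutoff does depend only on $x_{1}$, but $J^{s}=(1-\Delta)^{s/2}$ remains an $n$-dimensional operator: the asymptotic symbol of $[J^{s},\phi(x_{1})]$ is $\sum_{k\ge1}c_{k}\,\partial_{\xi_{1}}^{k}\langle\xi\rangle^{s}\,\phi^{(k)}(x_{1})$, and $\partial_{\xi_{1}}^{k}\langle\xi\rangle^{s}$ still depends on all $n$ frequency variables, so no tensor factorization into a 1D commutator occurs. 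The paper handles this with the full $n$-dimensional pseudo-differential calculus (Proposition~\ref{prop1} and the representation Theorem~\ref{rea}), writing
$[J^{s};\chi_{\epsilon,\tau,\sigma}]f=\sum_{j,\alpha,\beta}\omega_{\alpha,\beta}\,\partial_{x}^{\alpha}\chi_{\epsilon,\tau,\sigma}\,\Psi_{\eta_{\alpha,\beta}}J^{s-|\alpha|}f+\Psi_{\kappa}f$
with $\Psi_{\eta_{\alpha,\beta}},\Psi_{\kappa}\in\mathrm{OP}\mathbb{S}^{0}$, and then invoking Lemma~\ref{zk37}, the three-lines interpolation lemma, and the half-space localization formulas to control the resulting pieces; these lemmas (the paper's main technical input) are precisely what replace the 1D Stein estimates, and they have no content in your outline. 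Finally, the ``extend the integer-scale Theorem~\ref{t1} to all $n$, then do one fractional step, then interpolate'' induction is a genuinely different structure from the paper's, which runs the energy estimate directly at fractional levels $s\in(l,l+1)$ and upgrades the gain of one local derivative from $\nabla J^{s}u$ to $J^{s+1}u$ by two intermediate claims; while your variant is conceivable, it would still require the localization machinery and does not circumvent either gap above.
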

The proof of the Theorem \ref{zk9} is mainly based in  combining the ideas used in the proof of Theorem \ref{t1}, as well as the ideas used in the  study of propagation of regularity for solutions of the KdV equation \cite{KLPV}. More precisely, it combines  an inductive argument together   with  weighted energy estimates, where  the  class of weights used  enjoy of some particular  properties that allow to capture the information  related to the regularity in  certain subsets of $\mathbb{R}^{n},n=2,3.$   Also, the method of proof  uses strongly the   \emph{Kato's smoothing effect}, this is  a property found originally by Kato in the KdV context (see Kato \cite{KATO1}).

In this sense, our contribution is mainly based on establishing certain localization formulas on half-spaces and strips  for the operator $ J^s, s>0.$
 Similarly, we show that under certain conditions it is possible to establish relationships between $J^ s,s\geq 1$ and  $\partial_{x}^{\alpha},\alpha\in \left(\mathbb{Z}^{+}\right)^{n},$ when we are restricted to certain class of subsets of the euclidean space, for a more detailed description see Lemma \ref{A}.

  Although the proof of the theorem \ref{zk9} is somewhat technical,  the properties it describes are quite intuitive and have its particular flavor. In this sense,    we will present a geometric description of these when we restrict ourselves to dimension $n=2$ and $n=3$ due to its physical relevance. 
  
  In dimension  $n=2$ two situations arise according to Theorem \ref{zk9}.   The first one, namely   $\sigma=(\sigma_{1},0)$  with $\sigma_{1}>0.$
  Under this condition, the   extra regularity of the initial data $u_{0}$  in the half space $\mathcal{H}_{\{\sigma,\beta\}}$ see \eqref{a1}), that in this case is fractional (cf.\eqref{a2}).  This dynamics is exemplified in figure \ref{fig:1}, where the arrows indicate the sense of propagation  (to the left), as well as two  gray zones denoting  the regions $\mathcal{Q}_{\{\sigma,\epsilon+\beta-\nu t,\tau+\beta-\nu t\}}$ and $\mathcal{H}_{\{\sigma,\epsilon+\beta-\nu t\}},$  denoting the set of propagation and the strip  that carry out the smoothing effect resp. The set  $\mathcal{H}_{\{\sigma,\epsilon+\beta-\nu t\}}$ denotes the  moving region of the plane   
  that carries out the information corresponding to the propagation of  regularity,
  that despite   being fractional it is propagated to the left with infinite speed. Instead, the set $\mathcal{Q}_{\{\sigma,\epsilon+\beta-\nu t,\tau+\beta-\nu t\}}$ corresponds to the region where  the solution is  smoother by   one  local-derivative. More precisely, in this region  is present  the Kato's smoothing effect. 
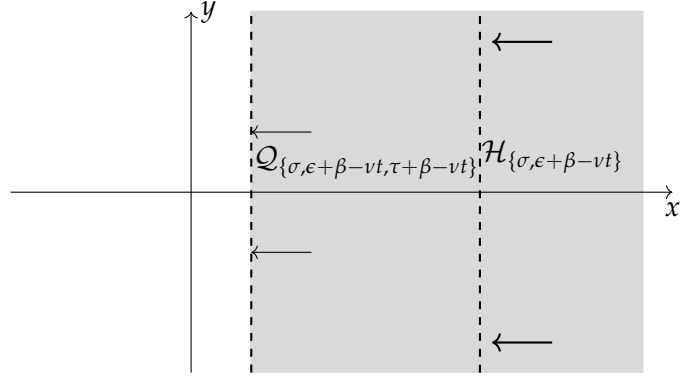
\begin{figure}[h!]
	\begin{center}
		\begin{tikzpicture}[scale=0.8]
		\filldraw[thick, color=gray!30] (2,-1) -- (2,5)  -- (5.8,5) -- (5.8,-1);
		\filldraw[thick, color=gray!30] (5.8,-1) -- (5.8,5)  -- (8.5,5) -- (8.5,-1);
		\draw[->] (3,1) -- (2,1);
		\draw[->] (3,3) -- (2,3);
		\draw[->, thick] (7,-0.5) -- (6,-0.5);
		\draw[->, thick] (7,4.5) -- (6,4.5);
		\draw[thick,dashed] (2,-1) -- (2,5);
		\draw[thick,dashed] (5.8,-1) -- (5.8,5);
		\draw[->] (-2,2) -- (9,2) node[below] {$x$};
		\draw[->] (1,-1) -- (1,5) node[right] {$y$};
		\node at (3.9,2.5){$\mathcal{Q}_{\{\sigma,\epsilon+\beta-\nu t,\tau+\beta-\nu t\}} $};
		\node at (7,2.6){$\mathcal{H}_{\{\sigma,\epsilon+\beta-\nu t\}} $};
		\end{tikzpicture}
		\qquad
	\end{center}
	\caption{\emph{Sense of propagation of regularity in the case $\sigma_{1}>0,\,\sigma_{2}=0.$}}\label{fig:1}
\end{figure}
 Unlike the previous situation, the case $\sigma=(\sigma_{1},\sigma_{2})$ with $\sigma_{1},\sigma_{2}>0$ is more involved,  since  the effects on the $y-$variable yield  a change in the geometry  of   the propagation. In this case, the dynamics   
  is carried out in a diagonal  sense  as is  indicated by the arrows in the figure \ref{FIGURE}. 
 \begin{figure}[h!]
 	\begin{center}
 		\begin{tikzpicture}[scale=0.7]
 		\filldraw[thick, color=gray!30](-4,3.5) -- (-4,5.6)--(8.5,-1) -- (4.5,-1); 
 		\filldraw[thick, color=gray!30] (-4,5.6) -- (1,5.6)  -- (8.5,5. 6) -- (8.5,-1);
 		\draw[->] (4,1)--(3,-0.2) ;
 		\draw[->] (-2,4.3)--(-3,3) ;
 		\draw[->,thick] (4.9,4.4)--(4,3.2);
 		\draw[thick,dashed] (-4,5.6) -- (8.5,-1);
 		\draw[thick,dashed] (4.5,-1)--(-4,3.5) ; 	
 		\draw[->] (-4.5,0) -- (9,0) node[below] {$x$};
 		\draw[->] (1,-1) -- (1,5) node[right] {$y$};
 		\node at (1.6,1.7){$\footnotesize{\mathcal{Q}_{\{\sigma, \eta_{1}(t), \eta_{2}(t)\}}} $};
 		\node at (5,2.5){$\mathcal{H}_{\{\sigma,\eta_{1}(t)\}} $};
 		\end{tikzpicture}
 		\qquad
 	\end{center}
 	\caption{\emph{Sense of propagation of regularity   in the  2-dimensional case with  $\sigma_{1},\sigma_{2}>0,$ whence  $\eta_{1}(t):=\epsilon+\beta-\nu t$ and $\eta_{2}(t):=\tau+\beta-\nu t.$}}\label{FIGURE}
 \end{figure}
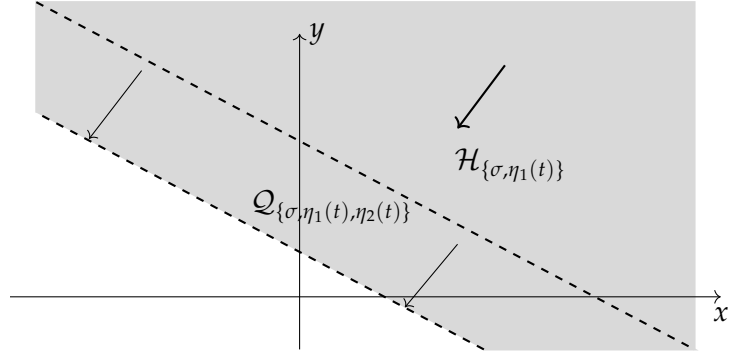
To give a  geometrical idea about the propagation of regularity phenomena in the three-dimensional case it  is even more involved  than in the $2d$ case since   it entails to describe    many more sub-cases that we do not intend to describe them by   entirely,   but instead  to fix the  ideas we   only focus on  describing  the case $\sigma=(\sigma_{1},\sigma_{2},\sigma_{3}),$ where $\sigma_{1},\sigma_{2},\sigma_{3}>0.$

The  consideration of a third variable   yield to  a more complex dynamics. In this situation, the sense of propagation  occurs  in a  diagonal sense  as  indicates  the dashed arrow  in the  figure  \ref{fig:M1}. As in the previous situations, the sets $\mathcal{Q}_{\{\sigma,\epsilon+\beta-\nu t,\tau+\beta-\nu t\}}$ and $\mathcal{H}_{\{\sigma,\epsilon+\beta-\nu t\}},$ denotes moving regions  with infinite speed of propagation indicating the  regularity propagated as well as the smoothing effect. Note that the  dashed triangles  enclose the  channel $\mathcal{Q}_{\{\sigma,\epsilon+\beta-\nu t,\tau+\beta-\nu t\}}$  and the upper triangle denotes the border of the region $\mathcal{H}_{\{\sigma,\epsilon+\beta-\nu t\}}$ when we restrict to the first octant. 

Additionally to the properties described  in Theorem \ref{zk9} we  also  provide information in the case 
when  we give  privilege  to a specific direction, this constitute our second main result  and it is summarized  in the following theorem.
 \begin{center}
	\begin{figure}
		\begin{tikzpicture}[x=0.5cm,y=0.5cm,z=0.3cm,>=stealth]
		\draw[->] (xyz cs:x=-5) -- (xyz cs:x=9) node[above] {$x$};
		\draw[->] (xyz cs:y=-2) -- (xyz cs:y=9) node[right] {$z$};
		\draw[->] (xyz cs:z=-2) -- (xyz cs:z=9) node[above] {$y$};
		\draw[->,dashed] (4,2,0) -- (4,2,-5) node[below] {};
		\filldraw[thick, color=lightgray!30](0,3,0) -- (4,0,0) -- (0,0,4)--(0,3,0); 
		\draw[thick,dashed] (0,3,0) -- (4,0,0)--(0,0,4)--(0,3,0);
		\filldraw[thick, color=lightgray!30](0,6,0) -- (7,0,0) -- (0,0,7)--(0,6,0); 
		\draw[thick,dashed] (0,6,0) -- (7,0,0)--(0,0,7)--(0,6,0);
		
		\node[align=center] at (4,-4) (ori) {\\$\mathcal{Q}_{\{\sigma,\epsilon+\beta-\nu t,\tau+\beta-\nu t\}} $};
		\draw[->,help lines,shorten >=3pt] (ori) .. controls (1,-2) and (1.2,-1.5) .. (5,2,-1);
		\node[align=center] at (0,5,5) (ori) {\\$\mathcal{H}_{\{\sigma,\epsilon+\beta-\nu t\}} $};
		\draw[->,help lines,shorten >=3pt] (ori) .. controls (4,5,0) and (4,6,0) .. (4,2,2);
		\end{tikzpicture}
		\caption{\emph{Sense of propagation of regularity   in the  3D case with  $\sigma_{1},\sigma_{2},\sigma_{3}>0.$}} \label{fig:M1}
	\end{figure}
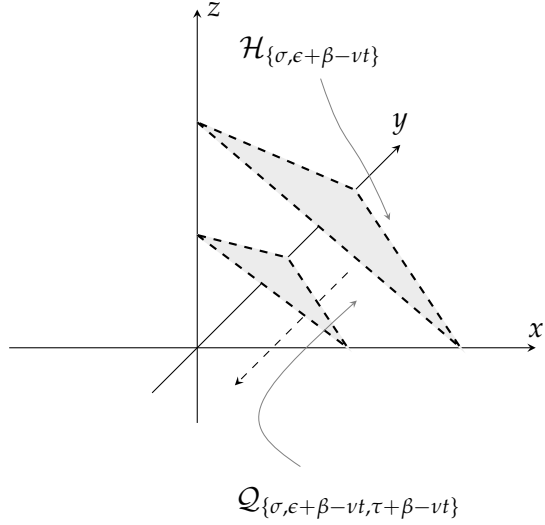
\end{center}
%
\begin{thmx}\label{zk10}
	Let $u_{0}\in H^{s_{n}^{+}}(\mathbb{R}^{n}).$  If for some $\sigma=(\sigma_{1},\sigma_{2},\dots,\sigma_{n})\in \mathbb{R}^{n}$  with $n\geq 2;$ 
	\begin{equation*}
	\sigma_{1}>0,\,\, \sigma_{2},\dots,\sigma_{n}\geq 0\qquad \mbox{and}\qquad \sqrt{\sigma_{2}^{2}+\sigma_{3}^{2}+\dots+\sigma_{n}^{2}}<\sqrt{3}\sigma_{1},
	\end{equation*}
	and for some $s\in \mathbb{R},s>s_{n}$
	\begin{equation*}
	\left\|J^{s}_{x_{j}}u_{0}\right\|_{L^{2}\left(\mathcal{H}_{\{\sigma,\beta\}}\right)}<\infty 
	\end{equation*}	
	for some $j\in \{1,2,\cdots,n\},$
	then the corresponding solution  $u=u(x,t)$ of the IVP  \eqref{zk4} satisfies:  For any $\nu \geq 0,\, \epsilon>0$ and $\tau \geq5\epsilon$ 
	\begin{equation*}
	\sup_{0\leq t\leq T}\int_{\mathcal{H}_{\{\sigma,\beta+\epsilon-\nu t\}}}\left(J^{r}_{x_{j}}u(x,t)\right)^{2}\mathrm{d}x\leq c^{*},
	\end{equation*} 	
	for any $r\in (0,s]$ with $c^{*}=c^{*}\left(\epsilon;\sigma; T; \nu; \|u_{0}\|_{H^{s_{n}+}_{x_{j}}}; \|J^{r}_{x_{j}}u_{0}\|_{L^{2}\left(\mathcal{H}_{  \{\sigma,\beta\}}\right)}\right)>0.$
	
	In addition, for any $\nu \geq 0,\, \epsilon>0$ and $\tau\geq5\epsilon,$
	\begin{equation*}
	\begin{split}
&\sum_{1\leq m\leq n,m \neq j}\int_{0}^{T}\int_{\mathcal{Q}_{\{\beta-\nu t+\epsilon,\tau-\nu t+\beta}\}}\left(\partial_{x_{m}}J^{s}_{x_{j}}u\right)^{2}\,\mathrm{d}x\,\mathrm{d}t\\
	&\quad +\int_{0}^{T}\int_{\mathcal{Q}_{\{\beta-\nu t+\epsilon,\tau-\nu t+\beta}\}}\left(J^{s+1}_{x_{j}}u(x,t)\right)^{2}\,\mathrm{d}x\,\mathrm{d}t\leq c
	\end{split}
	\end{equation*}
	with $c=c\left(\epsilon;\sigma;\tau; T; \nu; \|u_{0}\|_{H^{s_{n}+}}; \|J^{r}_{x_{j}}u_{0}\|_{L^{2}\left(\mathcal{H}_{\{\sigma,\beta\}}\right)}\right)>0.$
\end{thmx}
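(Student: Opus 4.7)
The proof of Theorem \ref{zk10} proceeds by mimicking, step by step, the scheme employed for Theorem \ref{zk9}, replacing the isotropic Bessel operator $J^s$ by the directional one $J^s_{x_j}$. Since $J^s_{x_j}$ is a Fourier multiplier in $\xi_j$ alone, it commutes with every spatial derivative; applying it to \eqref{zk4} yields
\begin{equation*}
\partial_{t}(J^s_{x_j}u)+\partial_{x_1}\Delta(J^s_{x_j}u)+u\,\partial_{x_1}(J^s_{x_j}u)+[J^s_{x_j},u]\partial_{x_1}u=0.
\end{equation*}
The plan is to multiply by $\phi^{2}J^s_{x_j}u$, with $\phi=\phi(\sigma\cdot x-\beta+\epsilon-\nu t)$ the same truncated, non-decreasing cutoff used in the proof of Theorem \ref{zk9}, integrate by parts in space, and isolate a time-derivative of $\int\phi^{2}(J^s_{x_j}u)^2\,\mathrm{d}x$ together with a positive quadratic form that furnishes the smoothing effect.

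First I would verify that the dispersive integration by parts against $\partial_{x_1}\Delta$ produces, exactly as in Theorem \ref{zk9}, the symmetric form associated with the matrix having $3\sigma_{1}$ in the $(1,1)$-entry, $\sigma_{1}$ along the remaining diagonal, and $-\sigma_{k}$ in the first row and column ($k\geq 2$). Positivity of this form is equivalent, via Schur complement, to $3\sigma_{1}^{2}>\sigma_{2}^{2}+\cdots+\sigma_{n}^{2}$, which is the hypothesis, and from it one harvests the smoothing-effect integral $\int_{0}^{T}\!\!\int \phi'\,|\nabla(J^s_{x_j}u)|^{2}\,\mathrm{d}x\,\mathrm{d}t$. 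Because $\phi'$ concentrates in $\mathcal{Q}_{\{\sigma,\beta-\nu t+\epsilon,\beta-\nu t+\tau\}}$, this already controls $\partial_{x_m}J^s_{x_j}u$ in $L^{2}$ of that strip for every $m$; specializing to $m=j$ and combining with the $L^{2}$ propagation of $J^s_{x_j}u$ itself recovers the bound on $J^{s+1}_{x_j}u=(1-\partial_{x_j}^{2})^{1/2}J^s_{x_j}u$ through the identity $\|J^{s+1}_{x_j}v\|^{2}=\|J^s_{x_j}v\|^{2}+\|\partial_{x_j}J^s_{x_j}v\|^{2}$.

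Next I would address the nonlinear contributions. The term $\int\phi^{2}u\,(J^s_{x_j}u)\,\partial_{x_1}(J^s_{x_j}u)\,\mathrm{d}x$ reduces, after one integration by parts, to a combination of $\int\phi\phi'u(J^s_{x_j}u)^{2}\,\mathrm{d}x$ and $\int\phi^{2}\partial_{x_1}u\,(J^s_{x_j}u)^{2}\,\mathrm{d}x$, both absorbed into a Gronwall loop using $\|u\|_\infty+\|\partial_{x_1}u\|_\infty\lesssim\|u\|_{H^{s_n+}}$ from Theorem \ref{t2}. The delicate piece $[J^s_{x_j},u]\partial_{x_1}u$ is handled by a one-dimensional (in $x_j$, with the remaining variables treated as parameters) Kato--Ponce commutator estimate, paired with the directional analogue of the localization formula of Lemma \ref{A}: the latter decomposes $J^s_{x_j}(\phi\,\cdot)$ as $\phi\,J^s_{x_j}+R$, where $R$ is an operator of smoothing order $s-1$ in $x_j$ whose weighted $L^{2}$ bounds follow from the inductive hypothesis and from the a priori $H^{s_n+}$ control of $u$. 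An induction on the integer part of $s$, advancing by increments of size at most one, closes the estimate at every stage.

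The main technical obstacle is the derivation of these directional localization identities, because the weight $\phi(\sigma\cdot x-\beta+\epsilon-\nu t)$ depends on \emph{all} spatial variables while $J^s_{x_j}$ sees only $x_j$. Fortunately, $[J^s_{x_j},\phi]$ involves exclusively $\sigma_j$ times $x_j$-derivatives of $\phi$, so a pseudodifferential calculus in the single variable $x_j$, with the other coordinates playing the role of parameters, reduces the analysis to a parameter-dependent version of the scalar identities of \cite{KLPV}. Once these building blocks are in place, Gronwall applied to the resulting differential inequality for $\int\phi^{2}(J^s_{x_j}u)^{2}\,\mathrm{d}x$ yields the first estimate of Theorem \ref{zk10}, and integrating the smoothing term in time over $\mathcal{Q}_{\{\sigma,\beta-\nu t+\epsilon,\beta-\nu t+\tau\}}$ gives the second; the condition $\tau\geq 5\epsilon$ is imposed, as in Theorem \ref{zk9}, to guarantee compatibility between the supports of $\phi$ and $\phi'$ throughout $[0,T]$.
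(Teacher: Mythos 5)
Your proposal follows essentially the same route as the paper: the paper proves only Theorem~\ref{zk9} in detail and explicitly remarks that an analogous argument yields Theorem~\ref{zk10}, so replacing the isotropic $J^{s}$ by the directional multiplier $J^{s}_{x_j}$ throughout the weighted-energy scheme is precisely the intended strategy, and your outline of the dispersive quadratic form (its positivity under $\sqrt{3}\sigma_{1}>\sqrt{\sigma_{2}^{2}+\cdots+\sigma_{n}^{2}}$, which you correctly identify via the Schur complement of the $(1,1)$-entry against the $\sigma_{1}I$ block), the Kato--Ponce commutator step, and the induction on the integer part of $s$ matches the paper's proof.

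There is, however, one genuine gap. You claim to pass from the bounds on $\partial_{x_j}J^{s}_{x_j}u$ in $L^{2}_{T}L^{2}(\mathcal{Q})$ and on $J^{s}_{x_j}u$ in $L^{\infty}_{T}L^{2}(\mathcal{H})$ to the localized bound on $J^{s+1}_{x_j}u$ ``through the identity $\|J^{s+1}_{x_j}v\|^{2}=\|J^{s}_{x_j}v\|^{2}+\|\partial_{x_j}J^{s}_{x_j}v\|^{2}$.'' That identity is a global Plancherel statement in $L^{2}(\mathbb{R}^{n})$, whereas your estimates are weighted by cutoffs living in a strip and a half-space; since $J^{s+1}_{x_j}$ is nonlocal, the identity does not simply restrict to $\mathcal{Q}_{\{\sigma,\epsilon-\nu t,\tau-\nu t\}}$. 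The correct passage is the directional analogue of Claim~1 in the paper's proof of Theorem~\ref{zk9}: decompose $J^{s+1}_{x_j}=J^{s-1}_{x_j}+\Psi_{j}\,\partial_{x_j}J^{s}_{x_j}$ with $\Psi_{j}=\partial_{x_j}J^{-1}_{x_j}\in\mathrm{OP}\mathbb{S}^{0}$, insert a pair of nested auxiliary cutoffs $\theta_{1},\theta_{2}$ adapted to the strip, and bound the resulting commutator $[\Psi_{j};\theta_{2}]$ and the separated-support tail $\theta_{2}\Psi_{j}\bigl((1-\theta_{1})\partial_{x_j}J^{s}_{x_j}u\bigr)$ using the kernel estimates behind Lemma~\ref{zk19} and Lemma~\ref{lem1}. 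You already invoke the directional localization formulas elsewhere, so the fix is small, but as written the sentence infers a local conclusion from a global identity, which is not valid. A secondary, cosmetic point: the term $\int\phi\phi'\,u\,(J^{s}_{x_j}u)^{2}\,\mathrm{d}x$ carries the weight $\phi\phi'$ (not $\phi^{2}$) and so is not closed by Gronwall; it is controlled by the smoothing estimate established at the previous inductive level, exactly as the term $A_{1}$ is treated in the paper's proof of Theorem~\ref{zk9}.
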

An argument  quite similar to the one given  for  the proof of  Theorem \ref{zk10}  also applies  for a proof of Theorem \ref{zk9}.

A quite  similar description of  the phenomena  presented in the Theorem \ref{zk10} can be given in geometrical terms  as we did for Theorem \ref{zk9} when we restrict ourselves to  dimension $n=2$ and $n=3$ see figure \ref{fig:1} and figure \ref{fig:M1} resp.  
  
Finally, as a by product  we present   a consequence of  Theorem \ref{zk9}  that describes   the behavior of the solution and its derivatives  in the remainder part of the half-space described in Theorem \ref{zk9}.
\begin{cor}\label{cor11}
	Let $u\in C\left([-T,T]: H^{s_{n}^{+}}(\mathbb{R}^{n})\right)$ be a solution of  the equation in \eqref{zk4} described by theorem A.
	Then, for  any $t\in (0,T)$ and $\delta>0,$ the following inequality holds:
	\begin{equation*}
	\int_{\mathbb{R}^{n}}\frac{1}{\langle \left(\sigma\cdot x-\beta\right)_{-}\rangle^{s+\delta}}\left(J^{s}u(x,t)\right)^{2}\mathrm{d}x\lesssim_{\delta,s,\sigma} \frac{1}{t},
	\end{equation*}
	where $x_{-}=\max\{0,-x\}$.
\end{cor}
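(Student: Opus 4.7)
The plan is to decompose the weight $\phi(x):=\langle (\sigma\cdot x - \beta)_-\rangle^{-(s+\delta)}$ as a superposition of indicators of half-spaces parallel to $\mathcal{H}_{\{\sigma,\beta\}}$, and then to apply Theorem~\ref{zk9} with a $t$-dependent choice of the speed parameter $\nu$; the $1/t$ rate is extracted from the $\nu$-dependence of the constant $c^{*}$.

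A direct computation gives
\begin{equation*}
\langle (\sigma\cdot x - \beta)_-\rangle^{-(s+\delta)} \;=\; \int_{0}^{\infty}f(R)\,\chi_{\mathcal{H}_{\{\sigma,\beta-R\}}}(x)\,dR,\qquad f(R):=(s+\delta)\,R\,(1+R^{2})^{-(s+\delta)/2-1},
\end{equation*}
with $f\geq 0$, $\int_{0}^{\infty}f=1$, and $f(R)\lesssim R^{-(s+\delta+1)}$ as $R\to\infty$. By Fubini the corollary reduces to the pointwise bound $\int_{0}^{\infty}f(R)\,G_{R}(t)\,dR\lesssim 1/t$, where $G_{R}(t):=\int_{\mathcal{H}_{\{\sigma,\beta-R\}}}(J^{s}u(x,t))^{2}\,dx$.

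For each $R>0$ and $t\in(0,T)$, apply Theorem~\ref{zk9} with speed parameter $\nu_{R}:=(R+\epsilon)/t$ for a fixed small $\epsilon>0$, so that the moving half-space at time $t$ satisfies $\mathcal{H}_{\{\sigma,\beta+\epsilon-\nu_{R}t\}}=\mathcal{H}_{\{\sigma,\beta-R\}}$; the half-space hypothesis $\|J^{s}u_{0}\|_{L^{2}(\mathcal{H}_{\{\sigma,\beta\}})}<\infty$ remains the only one required, unchanged under this choice. Inspection of the weighted energy argument behind Theorem~\ref{zk9} shows that $c^{*}$ depends at most linearly on $\nu$, since $\nu$ enters only through a drift term proportional to a single cut-off derivative in the energy identity. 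Consequently $G_{R}(t)\lesssim 1+R/t$ uniformly in $R$, and integrating against~$f$ gives
\begin{equation*}
\int_{0}^{\infty}f(R)\,G_{R}(t)\,dR \;\lesssim\; \int_{0}^{\infty}f(R)\,dR \;+\; \frac{1}{t}\int_{0}^{\infty}R\,f(R)\,dR \;\lesssim\; 1+\frac{1}{t}\;\lesssim\;\frac{1}{t},
\end{equation*}
the last inequality holding on the compact interval $(0,T)$, and the $R$-integral being convergent as soon as the weight exponent $s+\delta>1$ (which can be arranged by enlarging $\delta$); the subcritical case $s+\delta\leq 1$ is handled by splitting near $R=0$ and using the uniform bound $G_{R}(t)\leq c^{*}$ for small~$R$.

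The principal difficulty is the verification that $c^{*}$ in Theorem~\ref{zk9} depends at most linearly on $\nu$. This is natural from the structure of the weighted energy identity (where $\nu$ multiplies the derivative of a single cut-off), but it requires careful bookkeeping through the commutator and Kato--Ponce estimates used in the nonlinear analysis of $u\,\partial_{x_{1}}u$ to confirm that no higher power of $\nu$ is silently introduced. Once this linear scaling is in hand, the remaining steps of the proof---weight decomposition, Fubini, and the $R$-integration---are routine.
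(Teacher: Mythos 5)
Your argument follows the same conceptual route as the paper's: rewrite the decaying weight $\langle(\sigma\cdot x-\beta)_-\rangle^{-(s+\delta)}$ as a superposition of translated half-space (equivalently strip) cut-offs, invoke Theorem~\ref{zk9} with the speed parameter $\nu$ chosen time-dependently so that the moving half-space at time $t$ matches the shifted one, and then extract the $1/t$ rate from the $\nu$-dependence of $c^{*}$. The one stylistic difference is that you use a continuous layer-cake decomposition of the weight (the formula $\langle a\rangle^{-(s+\delta)}=\int_a^\infty f(R)\,dR$, with $a=(\sigma\cdot x-\beta)_-$), whereas the paper goes through the dyadic-strip lemma (Corollary~\ref{cor1}) and Remark~\ref{rem1.1}. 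This buys you nothing essential; the two decompositions are interchangeable and your $f$ and moment computation $\int Rf(R)\,dR<\infty$ for $s+\delta>1$ are correct.

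The genuine gap is the assertion that $c^{*}$ depends \emph{at most linearly} on $\nu$, which you correctly identify as the crux but then dismiss as ``natural from the structure of the weighted energy identity.'' In fact the paper's proof of Theorem~\ref{zk9} is inductive in the regularity index, and it is precisely through the induction that the $\nu$-dependence may compound. In the base case $s\in(s_n,s_n+1)$ the drift term $A_1$ is bounded by $|\nu|$ times a $\nu$-independent quantity (the Kato smoothing from Lemma~\ref{se}), so there one indeed picks up a single factor of $|\nu|$. But at the inductive step $s\in(l,l+1)$ the same term $A_1$ is bounded as $\int_0^T|A_1|\lesssim_{\nu}\,c^{***}_{(s-1)}/\lambda(\sigma)$, and the constant $c^{***}_{(s-1)}$ is built (through Claims~1 and~2) out of $c^{**}_{(l-1)}$, which in turn involves $c^{*}_{(l-1)}$ and $c^{*}_{(l-2)}$, each of which depends on $\nu$. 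A crude bookkeeping of the recursions $c^{*}_{(s)}\sim |\nu|\,c^{***}_{(s-1)}+\cdots$, $c^{**}_{(s)}\sim\bigl(T\,c^{*}_{(s-1)}+c^{*}_{(s)}/\lambda\bigr)^{1/2}$, $c^{***}\sim T^{1/2}\|u_0\|_{L^2}+c^{**}$, suggests a fixed point of the exponent recursion $k\mapsto 1+k/2$, i.e.\ dependence on the order of $\nu^{2}$, not $\nu^{1}$. With a degree-$p$ polynomial dependence, your scheme (and equally the paper's) yields $1/t^{p}$ rather than $1/t$. So either one must genuinely verify linear growth by a sharper bookkeeping than the one written in the paper, or the claimed rate must be weakened. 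Since the paper's own proof of Corollary~\ref{cor11} is equally terse at this exact point, this is not a defect peculiar to your proposal, but you have not closed the gap you yourself flagged, and the heuristic you offer for linearity (``$\nu$ multiplies the derivative of a single cut-off'') overlooks the recursive reuse of $\nu$-dependent smoothing estimates inside the induction.

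One smaller point: you should also make sure the application of Theorem~\ref{zk9} with $\nu_{R}=(R+\epsilon)/t$ is legitimate uniformly, i.e.\ that the fixed-$\epsilon$, $\tau\geq5\epsilon$ hypotheses of Theorem~\ref{zk9} are compatible with a single choice of $\epsilon$ for all $R$, and that the $t$- and $R$-uniformity of the implicit constants survives; this is routine but should be said, since $\nu_{R}\to\infty$ both as $R\to\infty$ and as $t\to0$.
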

\subsubsection{Organization of the paper}
 In the section 2  we  present a full description of the notation to be used throughout the document. In what   concerns  to section 3  we present a short review of several well known results about pseudo-differential operators. Additionally, in section 3 we deduce    new localization formulas for the operator $J^{s},s>0,$  that will be used extensively through the proof of Theorem \ref{zk9}. Finally, in the section 4 we present the proof of theorem \ref{zk9},  and  some of its  consequences.

\section{Notation}
In this section  we introduce the notation to be used throughout  all this document.

We adopt the following convention for the \emph{Fourier transform}
\begin{equation*}
\widehat{f}(\xi):=\int_{\mathbb{R}^{n}}e^{-2\pi ix\cdot \xi}f(x)\,\mathrm{d}x.
\end{equation*}
For $x\in\mathbb{R}^{n},$ we denote  $\langle x\rangle:=\left(1+|x|^{2}\right)^{1/2}.$  Additionally,  for any $s\in\mathbb{R}$ we define the operator $J^{s}$ via its Fourier transform as $\widehat{J^{s}f}(\xi)= \langle \xi\rangle ^{s}\widehat{f}(\xi).$  In the particular case that  be required to emphasize the action over a specific variable  we  write 
\begin{equation*}
\widehat{J^{s}_{x_{j}}f}(\xi):= \langle \xi_{j}\rangle^{s} \widehat{f}(\xi),\quad \xi=(\xi_{1},\xi_{2},\cdots,\xi_{n})\in\mathbb{R}^{n},
\end{equation*}
where  $j\in\mathbb{Z}^{+}$ with $1\leq j\leq n.$

The set of the \emph{Schwarz functions} will be denoted by $\mathcal{S}(\mathbb{R}^{n}),$ and its dual, the set of the  \emph{tempered distributions} will be denoted by $\mathcal{S}'(\mathbb{R}^{n}).$ The \emph{dual pair} between $\mathcal{S}'(\mathbb{R}^{n})$ and $\mathcal{S}(\mathbb{R}^{n})$ will be indicated as usual by  ${\displaystyle \langle\cdot,\cdot\rangle_{\mathcal{S}',\mathcal{S}}.}$

For $1\leq p\leq \infty,$\, $L^{p}(\mathbb{R}^{n})$  is the usual  Lebesgue space  with the norm  $\|\cdot\|_{L^{p}}.$  Additionally, for $s\in \mathbb{R},$  we consider the \emph{Sobolev space }$H^{s}(\mathbb{R}^{n})$  that is defined   as   
\begin{equation*}
H^{s}(\mathbb{R}^{n}):=\left\{f\in \mathcal{S}'(\mathbb{R}^{n})\,|\, \left\|J^{s}f\right\|_{L^{2}}<\infty\right\}.
\end{equation*}
Let $f=f(x,t)$  be a function  defined for $x=(x_{1},x_{2},\dots x_{n})\in \mathbb{R}^{n}$ and $t$ in the time interval $[0,T],$ with $T>0$  or in the hole line $\mathbb{R}$, then,  if $A$ denotes any of the spaces defined above, we define  the spaces  $L^{p}_{T}A$ and $L_{t}^{p}A$ by the norms 
\begin{equation*}
\|f\|_{L^{p}_{T}A}:=\left(\int_{0}^{T}\|f(\cdot,t)\|_{A}^{p}\,\mathrm{d}t\right)^{1/p}\quad \mbox{and}\quad \|f\|_{L^{p}_{t}A}:=\left(\int_{\mathbb{R}} \|f(\cdot,t)\|_{A}^{p}\,\mathrm{d}t\right)^{1/p},
\end{equation*}
for $1\leq p\leq \infty,$ with the  natural modification in the case $p=\infty.$

For $A,B$ operators  we will denote  the \emph{ commutator} between $A$ and $B$ by 
\begin{equation*}
[A; B]:=AB-BA.
\end{equation*}

For two quantities  $A$ and $B$, we denote  $A\lesssim B$  if $A\leq cB$ for some constant $c>0.$ Similarly, $A\gtrsim B$  if  $A\geq cB$ for some $c>0.$  Also for two positive quantities, $A$  $B$  we say that are \emph{ comparable}  if $A\lesssim B$ and $B\lesssim A,$ when such condition be  satisfied we will indicate it by $A \equiv B.$   The dependence of the constant $c$  on other parameters or constants are usually clear from the context and we will often suppress this dependence whenever it be  possible.

For any real number $a,$ we denote  by $a+$ the quantity $a+\epsilon$ for  any $\epsilon>0.$

\section{Pseudo-differential Operators}
In the following section it is our intention to provide   a brief summary  about some well known facts about  pseudo-differential operators as well as  several  properties   in  Sobolev spaces,  that will be relevant in our analysis. 
\begin{defn}\label{zk18}
	Let $m\in \mathbb{R}.$  Let $\mathbb{S}^{m}(\mathbb{R}^{n}\times\mathbb{R}^{n})$ denote the set of functions $a\in C^{\infty}(\mathbb{R}^{n}\times \mathbb{R}^{n})$ such that 
for all $\alpha$ and all $\beta$ multi-index
	\begin{equation}\label{sym}
	\left|\partial_{x}^{\alpha}\partial_{\xi}^{\beta}a(x,\xi)\right|\lesssim_{\alpha,\beta}(1+|\xi|)^{m-|\beta|},\quad \mbox{for all}\quad x \in\mathbb{R}^{n}.
	\end{equation}
	An element $a\in \mathbb{S}^{m}(\mathbb{R}^{n}\times\mathbb{R}^{n})$ is called a \emph{symbol of order $m.$}
\end{defn}
\begin{rem}
For the sake of simplicity in the notation from here on we	will   suppress  the dependence of the space $\mathbb{R}^{n}$ when we make reference to  a symbol in  a particular class.
\end{rem}
\begin{defn}
	\emph{A pseudo-differential  operator} is a mapping $f\mapsto \Psi f$ given by 
	\begin{equation}\label{e1.1}
	(\Psi f)(x)=\int_{\mathbb{R}^{n}}e^{2\pi\mathrm{i}x\cdot\xi}a(x,\xi)\widehat{f}(\xi)\,\mathrm{d}\xi,
	\end{equation}
	where $a(x, \xi)$ is the symbol of $\Psi.$
\end{defn}
 \begin{rem}
 	In order to emphasize the role of the symbol $a$ we will often write  $\Psi_{a}.$
 \end{rem}
\begin{defn}
	If $a(x,\xi)\in \mathbb{S}^{m},$ the operator $\Psi_{a}$ is said to belong  to $\mathrm{OP\mathbb{S}^{m}.}$ More precisely,  if $\Sigma$ is  any symbol class and $a(x,\xi)\in \Sigma,$ we say that $\Psi_{a}\in \mathrm{OP}\Sigma.$ 
\end{defn}
A quite remarkable property that pseudo-differential operators enjoy  is the existence of the adjoint operator, that is described below in terms of  its asymptotic decomposition. 
\begin{thm}
	Let $a\in \mathbb{S}^{m}.$ Then, there exist $a^{*}\in \mathbb{S}^{m}$ such that  $\Psi_{a}^{*}=\Psi_{a^{*}},$ and for all $N\geq 0,$
	\begin{equation*}
	a^{*}(x,\xi)-\sum_{|\alpha|<N}\frac{(2\pi i)^{-|\alpha|}}{\alpha!}\partial_{\xi}^{\alpha}\partial_{x}^{\alpha}\overline{a}(x,\xi)\in\mathbb{S}^{m-N}.
	\end{equation*}
\end{thm}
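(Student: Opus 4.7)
The plan is to derive an explicit (oscillatory) integral formula for the symbol $a^*$ of the formal adjoint, justify its convergence, and then extract its asymptotic expansion by Taylor expansion in $\xi$ together with integration by parts.

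First I would compute the kernel of the formal adjoint. Writing $\langle \Psi_a f, g\rangle_{L^{2}}$ with $f,g\in\mathcal{S}(\mathbb{R}^{n})$ and expanding $\widehat{f}(\xi)=\int e^{-2\pi i y\cdot\xi}f(y)\,\mathrm{d}y$, Fubini gives
\[
(\Psi_a^{*} g)(y)=\iint e^{2\pi i(y-x)\cdot\xi}\,\overline{a(x,\xi)}\,g(x)\,\mathrm{d}x\,\mathrm{d}\xi.
\]
Inserting $g(x)=\int e^{2\pi i x\cdot\eta}\widehat{g}(\eta)\,\mathrm{d}\eta$ and performing the changes of variables $x=y+z$, $\xi=\eta+\eta'$, one recognises that $\Psi_a^{*}=\Psi_{a^{*}}$ with
\[
a^{*}(y,\eta)=\iint e^{-2\pi i z\cdot\eta'}\,\overline{a(y+z,\eta+\eta')}\,\mathrm{d}z\,\mathrm{d}\eta'.
\]
This integral is not absolutely convergent, so I would interpret it as an oscillatory integral: inserting the identities $\langle z\rangle^{-2M}(1-\Delta_{\eta'})^{M}e^{-2\pi i z\cdot\eta'}=e^{-2\pi i z\cdot\eta'}$ and $\langle\eta'\rangle^{-2L}(1-\Delta_{z})^{L}e^{-2\pi i z\cdot\eta'}=e^{-2\pi i z\cdot\eta'}$, and integrating by parts, one produces an absolutely convergent representative whose integrand is controlled via \eqref{sym} by a constant multiple of $\langle z\rangle^{-2M}\langle\eta'\rangle^{-2L}(1+|\eta|+|\eta'|)^{m}$. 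Choosing $M,L$ large enough shows that $a^{*}\in C^{\infty}$ and that $|\partial_{y}^{\beta}\partial_{\eta}^{\gamma}a^{*}(y,\eta)|\lesssim\langle\eta\rangle^{m-|\gamma|}$, so $a^{*}\in\mathbb{S}^{m}$.

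For the asymptotic expansion, I would apply Taylor's theorem in the $\xi$-variable around $\eta'=0$:
\[
\overline{a(y+z,\eta+\eta')}=\sum_{|\alpha|<N}\frac{(\eta')^{\alpha}}{\alpha!}\partial_{\xi}^{\alpha}\overline{a}(y+z,\eta)+R_{N}(y,z,\eta,\eta'),
\]
with $R_{N}$ given by the usual integral remainder involving $\partial_{\xi}^{\alpha}\overline{a}(y+z,\eta+t\eta')$ for $|\alpha|=N$ and $t\in[0,1]$. For each principal summand, the identity $(\eta')^{\alpha}e^{-2\pi i z\cdot\eta'}=(-2\pi i)^{-|\alpha|}\partial_{z}^{\alpha}e^{-2\pi i z\cdot\eta'}$, followed by an integration by parts in $z$, transfers the factor $(\eta')^{\alpha}$ into $(2\pi i)^{-|\alpha|}\partial_{y}^{\alpha}$ acting on $\partial_{\xi}^{\alpha}\overline{a}(y+z,\eta)$. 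The surviving $\mathrm{d}\eta'$-integration produces a $\delta(z)$, and evaluating at $z=0$ yields exactly
\[
\frac{(2\pi i)^{-|\alpha|}}{\alpha!}\,\partial_{\xi}^{\alpha}\partial_{x}^{\alpha}\overline{a}(y,\eta),
\]
matching the claimed expansion.

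The main obstacle is to verify that the contribution of $R_{N}$ lies in $\mathbb{S}^{m-N}$. I would repeat the oscillatory-integral scheme of the first step on
\[
\iint e^{-2\pi i z\cdot\eta'}R_{N}(y,z,\eta,\eta')\,\mathrm{d}z\,\mathrm{d}\eta',
\]
using that each term of $R_{N}$ carries a factor $(\eta')^{\alpha}$ with $|\alpha|=N$, which can be converted to $N$ derivatives in $z$ on $\partial_{\xi}^{\alpha}\overline{a}(y+z,\eta+t\eta')$, thereby producing a symbol of order $m-N$ by \eqref{sym}. The delicate step is the bookkeeping in the region $|\eta'|\gtrsim\langle\eta\rangle$, where the bound $\langle\eta+t\eta'\rangle^{m-N}$ cannot be replaced by $\langle\eta\rangle^{m-N}$; there one splits the $\eta'$-integral into $\{|\eta'|\le\tfrac{1}{2}\langle\eta\rangle\}$ and its complement, and in the latter region compensates by additional integrations by parts in $z$ to gain arbitrary powers of $\langle\eta'\rangle^{-1}$. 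Combining both regimes with the derivative estimates for the $\partial_{y}^{\beta}\partial_{\eta}^{\gamma}$ of the remainder yields the required $\mathbb{S}^{m-N}$ bound.
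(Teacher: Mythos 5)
Your sketch is the standard oscillatory-integral proof: the explicit formula $a^{*}(y,\eta)=\iint e^{-2\pi i z\cdot\eta'}\overline{a(y+z,\eta+\eta')}\,\mathrm{d}z\,\mathrm{d}\eta'$, regularization by repeated integration by parts, Taylor expansion of $\overline{a}$ in the frequency variable, and remainder control by splitting the $\eta'$-integration at scale $\langle\eta\rangle$. The paper does not supply its own proof but simply cites Stein, Chapter~VI, where precisely this argument appears, so your proposal is correct and follows the same route the paper invokes (modulo the harmless normalization that $(1-\Delta_{\eta'})e^{-2\pi i z\cdot\eta'}=(1+4\pi^{2}|z|^{2})e^{-2\pi i z\cdot\eta'}$, so the prefactor should be $(1+4\pi^{2}|z|^{2})^{-M}$ rather than $\langle z\rangle^{-2M}$).
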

\begin{proof}
	See Stein \cite{stein3} chapter VI.
\end{proof}
\begin{rem}
	In fact from the formula above it  follows  that $\Psi_{a}^{*}=\Psi_{\overline{a}}\,\,\mathrm{mod}\, \mathrm{OP}\mathbb{S}^{m-1}.$ 
\end{rem}
Also, the existence of  the adjoint  allow to extend the action of $\Psi_{a}$ over a  wider class of objects, such as tempered distributions.
\begin{thm}
	If $a\in \mathbb{S}^{m},$ then $\Psi_{a}$ defined  in \eqref{e1.1} is a continuous  operator \linebreak$ \Psi_{a}:\mathbb{S}(\mathbb{R}^{n})\longrightarrow C^{\infty}(\mathbb{R}^{n}).$ Additionally, the map can be extended to a continuous  map 
	\begin{equation*}
	\Psi_{a}:\mathcal{S}'(\mathbb{R}^{n})\longrightarrow \mathcal{S}'(\mathbb{R}^{n}).
	\end{equation*}
More precisely, for $u\in\mathcal{S}'$ and $v\in\mathbb{S}$
	\begin{equation*}
	\langle \Psi_{a} u,v\rangle_{\mathcal{S}',\mathcal{S}}:=	\langle  u,\overline{\Psi_{a^{*}}\overline{v}}\rangle_{\mathcal{S}',\mathcal{S}}.
	\end{equation*}
\end{thm}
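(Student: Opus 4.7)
The plan is to split the proof into two parts, matching the two claims of the theorem: first, that $\Psi_{a}$ acts continuously from $\mathcal{S}(\mathbb{R}^{n})$ into $C^{\infty}(\mathbb{R}^{n})$ (and in fact into $\mathcal{S}(\mathbb{R}^{n})$), and second, that this action extends by duality to a continuous map on $\mathcal{S}'(\mathbb{R}^{n})$.

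For the first part I would begin by observing that, since $f\in\mathcal{S}$ implies $\widehat{f}\in\mathcal{S}$, the symbol estimate \eqref{sym} with $\alpha=\beta=0$ makes the integrand in \eqref{e1.1} absolutely integrable: $|a(x,\xi)\widehat{f}(\xi)|\lesssim (1+|\xi|)^{m}|\widehat{f}(\xi)|$. Smoothness of $\Psi_{a}f$ follows from differentiating under the integral sign: each $\partial_{x}^{\gamma}$ produces, by the Leibniz rule, a finite sum of terms of the form $(2\pi i\xi)^{\gamma_{1}}\partial_{x}^{\gamma_{2}}a(x,\xi)\widehat{f}(\xi)$, and the hypothesis \eqref{sym} controls each such factor by $(1+|\xi|)^{m+|\gamma_{1}|}$, which is again integrable against $\widehat{f}\in\mathcal{S}$. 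To obtain the stronger conclusion $\Psi_{a}f\in\mathcal{S}$ (needed for the duality extension below), I would use integration by parts in $\xi$: the identity $x^{\delta}e^{2\pi ix\cdot\xi}=(2\pi i)^{-|\delta|}\partial_{\xi}^{\delta}e^{2\pi ix\cdot\xi}$ allows one to trade polynomial growth in $x$ for $\xi$-derivatives landing on $a(x,\xi)\widehat{f}(\xi)$, and then \eqref{sym} plus the Schwartz decay of $\widehat{f}$ yields uniform bounds. Tracking the constants shows that every Schwartz seminorm of $\Psi_{a}f$ is dominated by a finite number of Schwartz seminorms of $f$, which is exactly the continuity statement.

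For the second part I would invoke the preceding theorem, which gives $a^{*}\in\mathbb{S}^{m}$ and identifies $\Psi_{a}^{*}=\Psi_{a^{*}}$ on $\mathcal{S}$. By the first part applied to $a^{*}$, the map $v\mapsto\overline{\Psi_{a^{*}}\overline{v}}$ is continuous from $\mathcal{S}(\mathbb{R}^{n})$ to itself. This makes the right-hand side of
\begin{equation*}
\langle \Psi_{a} u,v\rangle_{\mathcal{S}',\mathcal{S}}:=\langle  u,\overline{\Psi_{a^{*}}\overline{v}}\rangle_{\mathcal{S}',\mathcal{S}}
\end{equation*}
a well-defined continuous linear functional of $v\in\mathcal{S}$, so $\Psi_{a}u\in\mathcal{S}'(\mathbb{R}^{n})$. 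One then checks two consistency points: (i) when $u\in\mathcal{S}$, a direct computation with Fubini shows that this definition agrees with the pointwise formula \eqref{e1.1}, so the extension deserves the name $\Psi_{a}$; and (ii) if $u_{k}\to u$ in $\mathcal{S}'$, then for every $v\in\mathcal{S}$ the scalar $\langle u_{k},\overline{\Psi_{a^{*}}\overline{v}}\rangle$ converges to $\langle u,\overline{\Psi_{a^{*}}\overline{v}}\rangle$, which is the continuity of the extension from $\mathcal{S}'$ to $\mathcal{S}'$.

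The main obstacle, and really the only non-routine piece, is the $\mathcal{S}\to\mathcal{S}$ mapping property: the class $\mathbb{S}^{m}$ imposes no decay of $a$ in the $x$-variable, so polynomial bounds on $x^{\delta}\partial_{x}^{\gamma}\Psi_{a}f$ cannot be read off the integrand directly. Pushing the $x^{\delta}$ inside as $\xi$-derivatives through integration by parts, and carefully bookkeeping that the boundary terms vanish because $\widehat{f}\in\mathcal{S}$ and $a$ has only polynomial growth in $\xi$, is the technical core of the argument; once this is in hand the rest is formal.
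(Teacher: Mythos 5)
Your proposal is correct, and it is somewhat more explicit than the paper's own (which just states the theorem, says the proof ``is based in the following lemma,'' and records the estimate
\begin{equation*}
\left|\int_{\mathbb{R}^{n}}v(x)a(x,\xi)e^{2\pi i x\cdot\eta}\,\mathrm{d}x\right|\leq C_{N}\left(1+|\xi|\right)^{m}\left(1+|\eta|\right)^{-N}
\end{equation*}
proved by Leibniz plus \eqref{sym}). The routes differ in which variable you integrate by parts in, and consequently in how the $\mathcal{S}'$ extension is constructed. The paper's lemma integrates by parts in $x$ (the $(1-\Delta_{x})^{N}$ trick against the test function $v$), producing decay in the dual variable $\eta$; setting $\eta=\xi$ this shows that $G_{v}(\xi):=\int v(x)a(x,\xi)e^{2\pi ix\cdot\xi}\,\mathrm{d}x$ is Schwartz in $\xi$ and depends continuously on $v$, so one can directly set $\langle \Psi_{a}u,v\rangle:=\langle \widehat{u},G_{v}\rangle$ for $u\in\mathcal{S}'$ without ever establishing $\Psi_{a}:\mathcal{S}\to\mathcal{S}$. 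You integrate by parts in $\xi$ to get decay of $\Psi_{a}f$ in $x$, prove the stronger $\mathcal{S}\to\mathcal{S}$ mapping property, and then pass through the adjoint theorem to define the extension by duality. Your route is slightly longer because it leans on the adjoint theorem as a genuine prerequisite, but it makes the final duality formula $\langle\Psi_{a}u,v\rangle=\langle u,\overline{\Psi_{a^{*}}\overline{v}}\rangle$ transparent rather than a consequence to be verified afterwards, and it proves more than the stated $\mathcal{S}\to C^{\infty}$ continuity (the $\mathcal{S}\to\mathcal{S}$ fact is genuinely needed for the duality definition to make sense, and you correctly identify it as the technical crux). Both arguments are the standard complementary integration-by-parts devices and there is no gap in yours.
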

In fact the proof of this theorem  is based in the following lemma  that condensates several properties to be used later.
\begin{lem}
	Let $a\in \mathbb{S}^{m},\, v\in \mathcal{S}(\mathbb{R}^{n}).$ Then for all $\xi,\eta\in\mathbb{R}^{n},$
	\begin{equation*}
	\left|\int_{\mathbb{R}^{n}}v(x)a(x,\xi)e^{2\pi ix\cdot \eta}\,\mathrm{d}x\right|\leq C_{N}\left(1+|\xi|\right)^{m}\left(1+|\eta| \right)^{-N},
\end{equation*}
for all $N\geq 0.$
\end{lem}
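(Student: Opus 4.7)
The plan is to combine the uniform pointwise bound on the symbol $a$ with an integration-by-parts argument in the $x$ variable, which is the standard way to manufacture arbitrary polynomial decay in $\eta$ for oscillatory integrals of this type.

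First I would dispose of the $(1+|\xi|)^m$ factor. By the symbol estimate in Definition \ref{zk18} with $\alpha=\beta=0$, we have $|a(x,\xi)| \lesssim (1+|\xi|)^m$ uniformly in $x$, and since $v\in \mathcal{S}(\mathbb{R}^n)\subset L^1(\mathbb{R}^n)$, the triangle inequality yields
\[
\left|\int_{\mathbb{R}^n} v(x)\,a(x,\xi)\,e^{2\pi i x\cdot \eta}\,\mathrm{d}x\right| \lesssim (1+|\xi|)^m \|v\|_{L^1}.
\]
This settles the case $N=0$ and, together with the elementary estimate $(1+|\eta|)^{-N}\geq 2^{-N}$ for $|\eta|\leq 1$, also the case of bounded $\eta$ for any $N$.

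To extract the decay in $\eta$ for $|\eta|\geq 1$, I would exploit the identity
\[
e^{2\pi i x\cdot \eta} = (1+4\pi^2|\eta|^2)^{-k}(I-\Delta_x)^k e^{2\pi i x\cdot \eta},
\]
substitute it into the integral, and integrate by parts $k$ times to transfer the operator $(I-\Delta_x)^k$ onto the amplitude $v(x) a(x,\xi)$. Boundary terms vanish because $v$ and all its derivatives are Schwartz. Expanding via Leibniz, the new integrand is a finite linear combination of terms of the form $\partial_x^{\gamma} v(x)\cdot \partial_x^{\delta} a(x,\xi)$ with $|\gamma|+|\delta|\leq 2k$. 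By the symbol estimates (taking $\beta=0$ in \eqref{sym}), each $\partial_x^{\delta} a(x,\xi)$ is bounded by a constant (depending on $\delta$) times $(1+|\xi|)^m$, uniformly in $x$; and each $\partial_x^{\gamma} v$ is Schwartz, hence in $L^1(\mathbb{R}^n)$. Collecting these bounds gives
\[
\left|\int_{\mathbb{R}^n} v(x)\,a(x,\xi)\,e^{2\pi i x\cdot \eta}\,\mathrm{d}x\right| \lesssim_{k,v} (1+|\eta|)^{-2k}(1+|\xi|)^m,
\]
and choosing $k$ with $2k\geq N$ completes the estimate.

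There is no genuine obstacle here; the argument is pure bookkeeping. The only two points requiring mild care are that the boundary terms in the integration by parts really do vanish (guaranteed by the Schwartz decay of $v$) and that the constant $C_N$ is permitted to depend on $N$ and on finitely many Schwartz seminorms of $v$, which is consistent with the statement.
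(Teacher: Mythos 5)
Your argument is correct and follows exactly the route the paper gestures at in its one-line proof ("combining Leibniz rule and definition \ref{sym}"): integrate by parts with $(I-\Delta_x)^k$, distribute derivatives by Leibniz, and invoke the symbol bound \eqref{sym} with $\beta=0$ together with the Schwartz decay of $v$. You have simply filled in the standard details that the paper omits.
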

\begin{proof}
	The proof follows combining Leibniz rule and  definition \ref{sym}. 
\end{proof}

A quite interesting and useful remark about this extension  is related with  the representation of pseudo-differential operators. More precisely, for $u,v\in\mathcal{S}(\mathbb{R}^{n}),$ real valued functions 
\begin{equation}\label{e1.3}
\begin{split}
\langle \Psi_{a}u,v\rangle_{\mathcal{S}',\mathcal{S}}
&=\int_{\mathbb{R}^{n}}v(x)\Psi_{a}u(x)\,\mathrm{d}x\\
&=\int_{\mathbb{R}^{n}}\int_{\mathbb{R}^{n}}a(x,\xi)e^{2\pi ix\cdot\xi}v(x)\widehat{u}(\xi)\,\mathrm{d}\xi\,\mathrm{d}x\\
&=\int_{\mathbb{R}^{n}}\int_{\mathbb{R}^{n}}\int_{\mathbb{R}^{n}}a(x,\xi)e^{2\pi i (x-y)\cdot\xi}v(x)u(y)\,\mathrm{d}y\,\mathrm{d}\xi\mathrm{d}x\\
&=\langle K,uv\rangle_{\mathcal{S}',\mathcal{S}} 
\end{split}
\end{equation}
whence we get after  interpreting properly  as a distributional integral  that 
\begin{equation*}
K(x,x-y):=\int_{\mathbb{R}^{n}}a(x,\xi)e^{2\pi i (x-y)\cdot\xi}\,\mathrm{d}\xi.
\end{equation*}
This  brief description   of pseudo-differential operators in terms of kernels  is  summarized  in the following theorem. 
\begin{thm}[Realization of Pseudo-differential operators as singular integrals]\label{rea}
	Let $a\in \mathbb{S}^{m},\,m\in\mathbb{R}$ and $\Psi_{a}$ its corresponding pseudo-differential operator associated. Then, there exist  a kernel $k_{a}\in C^{\infty}\left(\mathbb{R}^{n}\times\mathbb{R}^{n}-\{0\}\right)$ satisfying the following properties:
	\begin{itemize}
		\item[(i)] The operator $\Psi_{a}$ admits the following representation
		\begin{equation}\label{k1}
		(\Psi_{a}f)(x)= \int_{\mathbb{R}^{n}} k_{a}(x,x-y)f(y)\,\mathrm{d}y,\qquad \mbox{if}\quad x\notin\supp(f);
		\end{equation}
		\item[(ii)] for all $\alpha,\beta$ multi-index and all $N\geq 0$,
		\begin{equation}\label{k2}
		\left|\partial_{x}^{\alpha}\partial_{\xi}^{\beta}k_{a}(x,z)\right|\lesssim_{\alpha,\beta,N,\delta}|z|^{-n-m-|\beta|-N},\qquad  |z|\geq \delta,
		\end{equation}
		if $n+N+m+|\beta|>0.$
	\end{itemize}
\end{thm}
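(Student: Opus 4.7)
The natural candidate for the kernel is the formal inverse Fourier transform of the symbol in the frequency variable, namely
\begin{equation*}
k_{a}(x,z) \;:=\; \int_{\mathbb{R}^{n}} a(x,\xi)\, e^{2\pi i z\cdot\xi}\,\mathrm{d}\xi,
\end{equation*}
which in general only makes sense as an oscillatory integral, since $a(x,\cdot)$ does not decay when $m\ge 0$. To give it meaning away from $z=0$, I would introduce a Schwartz cutoff $\chi_{\varepsilon}(\xi)=\chi(\varepsilon\xi)$, with $\chi(0)=1$, and define $k_{a}^{\varepsilon}$ by inserting $\chi_{\varepsilon}$ in the integrand. The main body of the argument is then to show that $k_{a}^{\varepsilon}$ converges, uniformly on $\{|z|\ge\delta\}$ together with all of its derivatives, to a smooth limit, which will be $k_{a}$.

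The key mechanism for both the convergence and the pointwise bound in (ii) is integration by parts in $\xi$ using the first-order operator
\begin{equation*}
L \;=\; \frac{z\cdot\nabla_{\xi}}{2\pi i\,|z|^{2}}, \qquad L\bigl(e^{2\pi i z\cdot\xi}\bigr)=e^{2\pi i z\cdot\xi},
\end{equation*}
whose formal transpose $L^{\top}$ carries one $\xi$-derivative onto the symbol at the cost of a factor $|z|^{-1}$. Applying $\partial_{x}^{\alpha}\partial_{z}^{\beta}$ to the integrand produces $\partial_{x}^{\alpha}a(x,\xi)\,(2\pi i\xi)^{\beta}$, which by Definition \ref{zk18} is a symbol of order $m+|\beta|$; then integrating by parts $M$ times against $L^{\top}$ yields
\begin{equation*}
\bigl|\partial_{x}^{\alpha}\partial_{z}^{\beta}k_{a}(x,z)\bigr| \;\lesssim\; |z|^{-M}\int_{\mathbb{R}^{n}}(1+|\xi|)^{m+|\beta|-M}\,\mathrm{d}\xi,
\end{equation*}
and choosing $M=n+m+|\beta|+N$ makes the integral converge precisely when $n+N+m+|\beta|>0$, giving the claimed bound \eqref{k2}. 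The same integration by parts, performed uniformly in $\varepsilon$, shows via dominated convergence that the cutoff sequence $k_{a}^{\varepsilon}$ is Cauchy in $C^{\infty}$ on compact subsets of $\{z\neq 0\}$, proving the regularity statement $k_{a}\in C^{\infty}(\mathbb{R}^{n}\times(\mathbb{R}^{n}\setminus\{0\}))$.

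Once the kernel is constructed, the representation (i) follows by Fubini. For $f\in\mathcal{S}(\mathbb{R}^{n})$ and $x\notin\supp(f)$, writing $\widehat{f}(\xi)=\int e^{-2\pi i y\cdot\xi}f(y)\,\mathrm{d}y$ in the formula \eqref{e1.1} and interchanging integrals formally gives
\begin{equation*}
(\Psi_{a}f)(x) \;=\; \int_{\mathbb{R}^{n}} f(y)\left(\int_{\mathbb{R}^{n}} a(x,\xi)e^{2\pi i(x-y)\cdot\xi}\,\mathrm{d}\xi\right)\mathrm{d}y \;=\; \int_{\mathbb{R}^{n}} k_{a}(x,x-y)f(y)\,\mathrm{d}y.
\end{equation*}
The interchange is justified by the same cutoff procedure: with $\chi_{\varepsilon}$ inserted, Fubini applies absolutely, and on the support of $f$ one has $|x-y|\ge\dist(x,\supp f)>0$, so the uniform bounds from the integration-by-parts argument let one pass $\varepsilon\to 0$ on both sides.

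The main technical obstacle is making the oscillatory integral defining $k_{a}$ rigorous and simultaneously controlling its derivatives; once the regularized kernels $k_{a}^{\varepsilon}$ are shown to admit the bound \eqref{k2} uniformly in $\varepsilon$ via the $L^{\top}$-integration by parts (with the required condition $n+N+m+|\beta|>0$ ensuring absolute convergence of the truncated integrand), both (i) and (ii) follow in a unified manner. Full details of this cutoff-and-limit procedure are standard and can be found in Stein \cite{stein3}.
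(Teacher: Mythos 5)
The paper gives no proof of this theorem at all: it cites Stein (Chapter VI) and moves on. So what you have written is more detailed than the paper's ``proof,'' and your sketch is indeed the standard argument found in Stein --- regularize the oscillatory integral with a cutoff $\chi_{\varepsilon}$, integrate by parts with the operator $L = (2\pi i |z|^{2})^{-1}\,z\cdot\nabla_{\xi}$, and pass to the limit to get both the kernel bound and the Fubini representation. In that sense you are following the same route as the source the paper points to, and the overall logic is sound.

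There is one imprecision worth flagging. You assert that ``choosing $M = n+m+|\beta|+N$ makes the integral converge precisely when $n+N+m+|\beta|>0$.'' That is not the convergence condition: the truncated integral $\int (1+|\xi|)^{m+|\beta|-M}\,\mathrm{d}\xi$ converges when $M - m - |\beta| > n$, i.e.\ $M > n+m+|\beta|$, which with your choice of $M$ reads $N>0$, not $n+N+m+|\beta|>0$. Moreover $M$ must be a nonnegative integer, so one cannot literally set $M = n+m+|\beta|+N$ when this is not an integer. The correct bookkeeping is: pick any integer $M > n+m+|\beta|$ and also $M \geq n+m+|\beta|+N$; integration by parts then gives $|\partial_{x}^{\alpha}\partial_{z}^{\beta}k_{a}(x,z)|\lesssim |z|^{-M}$, and on the region $|z|\geq\delta$ one downgrades this to $|z|^{-(n+m+|\beta|+N)}$ via $|z|^{-M}\leq \delta^{-(M-(n+m+|\beta|+N))}\,|z|^{-(n+m+|\beta|+N)}$. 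The hypothesis $n+m+|\beta|+N>0$ is what guarantees the stated exponent is positive, i.e.\ that the right-hand side is a decaying bound; it is not the condition for convergence of the $\xi$-integral. With that correction, your outline matches the Stein argument and completes the statement the paper only cites.
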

\begin{proof}
	See Stein \cite{stein3},  chapter VI.
\end{proof}
Additionally   the product $\Psi_{a}\Psi_{b}$ of two  operators  with symbols $a(x,\xi)$ and $b(x,\xi)$  respectively  is a pseudo-differential operator $\Psi_{c}$  with symbol  $c(x,\xi).$ More precisely, the description of the symbol $c$ is summarized in the following theorem:
\begin{thm}
	Suppose $a$ and $b$ symbols belonging to $\mathbb{S}^{m}$ and  $ \mathbb{S}^{r}$
	respectively. Then, there is  a symbol $c$ in  $\mathbb{S}^{m+r}$ so that 
	\begin{equation*}
\Psi_{c}=\Psi_{a}\circ\Psi_{b}.
	\end{equation*}
	Moreover, 
	\begin{equation*}
	c\sim\sum_{\alpha}\frac{(2\pi i)^{-|\alpha|}}{\alpha!}\partial_{\xi}^{\alpha}a\partial^{\alpha}_{x}b,
	\end{equation*}
	in the sense  that 
	\begin{equation*}
	c-\sum_{|\alpha|<N}\frac{(2\pi i)^{-|\alpha|}}{\alpha!}\partial_{\xi}^{\alpha}a\,\partial^{\alpha}_{x}b\in \mathbb{S}^{m+r-N}, \quad \mbox{for all integer} \quad N,\, N\geq 0. 
	\end{equation*}
\end{thm}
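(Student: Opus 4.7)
The plan is to establish the composition formula in three movements: derive an oscillatory-integral representation for the symbol of $\Psi_a \circ \Psi_b$, perform a Taylor expansion that isolates the claimed leading terms, and control the remainder by integration by parts in the phase variables.

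First, for $f\in\mathcal{S}(\mathbb{R}^n)$ I would plug the definition \eqref{e1.1} of $\Psi_b$ into $\Psi_a\circ\Psi_b$, write $\widehat{\Psi_b f}(\eta)$ explicitly, and formally interchange the order of integration to obtain
\begin{equation*}
(\Psi_a\Psi_b f)(x)=\int_{\mathbb{R}^n}e^{2\pi i x\cdot\xi}\,c(x,\xi)\,\widehat{f}(\xi)\,\mathrm{d}\xi,
\end{equation*}
where
\begin{equation*}
c(x,\xi)=\iint_{\mathbb{R}^n\times\mathbb{R}^n}e^{2\pi i(x-y)\cdot(\eta-\xi)}\,a(x,\eta)\,b(y,\xi)\,\mathrm{d}y\,\mathrm{d}\eta.
\end{equation*}
Since this double integral is not absolutely convergent, I would make sense of it as an oscillatory integral, following the standard recipe: insert a cutoff $\chi(\varepsilon y,\varepsilon\eta)$ with $\chi\in\mathcal{S}$ and $\chi(0,0)=1$, then use repeated integrations by parts with the operators $L_1=(1+|x-y|^2)^{-1}(1-\Delta_\eta/(4\pi^2))$ and $L_2=(1+|\eta-\xi|^2)^{-1}(1-\Delta_y/(4\pi^2))$, which leave the phase invariant, to obtain bounds uniform in $\varepsilon$ and pass to the limit.

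Second, to extract the asymptotic expansion I would apply Taylor's formula in the $\eta$ variable to $a(x,\eta)$ about $\eta=\xi$ to order $N$:
\begin{equation*}
a(x,\eta)=\sum_{|\alpha|<N}\frac{1}{\alpha!}\partial_\eta^\alpha a(x,\xi)(\eta-\xi)^\alpha+\sum_{|\alpha|=N}\frac{N}{\alpha!}(\eta-\xi)^\alpha\!\int_0^1(1-t)^{N-1}\partial_\eta^\alpha a(x,\xi+t(\eta-\xi))\,\mathrm{d}t.
\end{equation*}
For each principal term, the identity $(\eta-\xi)^\alpha e^{2\pi i(x-y)\cdot(\eta-\xi)}=(2\pi i)^{-|\alpha|}\partial_y^\alpha(-1)^{|\alpha|}e^{2\pi i(x-y)\cdot(\eta-\xi)}$ lets me integrate by parts in $y$; the resulting oscillatory integral then collapses via the Fourier inversion formula in the pair $(y,\eta-\xi)$ to the value at $y=x$, yielding
\begin{equation*}
\frac{(2\pi i)^{-|\alpha|}}{\alpha!}\,\partial_\xi^\alpha a(x,\xi)\,\partial_x^\alpha b(x,\xi),
\end{equation*}
which matches the formal series.

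The main obstacle is proving the remainder after $N$ terms lies in $\mathbb{S}^{m+r-N}$; this is the heart of the argument. For this I would write the remainder as an oscillatory integral involving $(\eta-\xi)^\alpha$ and $\partial_\eta^\alpha a(x,\xi+t(\eta-\xi))\,b(y,\xi)$, rewrite $(\eta-\xi)^\alpha$ as $\partial_y^\alpha$ on the exponential, and integrate by parts to move $N$ derivatives onto $b(y,\xi)$. Then I would differentiate under the integral in $x$ and $\xi$ and apply the operators $L_1,L_2$ above sufficiently many times to gain decay in $|y-x|$ and in $|\eta-\xi|$, using the symbol estimates \eqref{sym} for $a$ (which gives $\langle\xi+t(\eta-\xi)\rangle^{m-|\beta|}$) and for $b$. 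The delicate point is controlling the factor $\langle\xi+t(\eta-\xi)\rangle^m$ uniformly in $t\in[0,1]$: I would split the integration region into $\{|\eta-\xi|\le|\xi|/2\}$, where $\langle\xi+t(\eta-\xi)\rangle\equiv\langle\xi\rangle$, and its complement, where the extra decay from $L_2$ compensates the symbol growth. Putting these estimates together yields the required bound $|\partial_x^{\alpha'}\partial_\xi^{\beta'}(\text{remainder})|\lesssim\langle\xi\rangle^{m+r-N-|\beta'|}$, completing the proof. The identification $\Psi_a\circ\Psi_b=\Psi_c$ on $\mathcal{S}$, together with the resulting symbol estimates showing $c\in\mathbb{S}^{m+r}$, extends to $\mathcal{S}'$ by the duality argument recalled earlier in the section.
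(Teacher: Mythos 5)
Your proposal is correct, and it reproduces essentially the proof given in the paper's cited reference (Stein, \emph{Harmonic Analysis}, Chapter VI): the oscillatory-integral representation of the composed symbol, Taylor expansion of $a(x,\eta)$ about $\eta=\xi$, conversion of $(\eta-\xi)^\alpha$ into $\partial_y^\alpha$ acting on the exponential with Fourier inversion collapsing the principal terms, and control of the remainder via the operators $L_1,L_2$ together with the split $\{|\eta-\xi|\le|\xi|/2\}$ versus its complement. The paper itself does not supply a proof but refers to Stein, so there is no alternative route to compare against.
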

\begin{proof}
	For the proof see Stein \cite{stein3} chapter VI.
\end{proof}
\begin{rem}
	Note that $c-ab\in  \mathbb{S}^{m+r-1}.$ Moreover,   each  symbol of the form   $\partial_{\xi}^{\alpha}a\,\partial_{x}^{\alpha}b$ lies  in the class $\mathbb{S}^{m+r-|\alpha|}.$
\end{rem}
A direct consequence of  the decomposition above  is that it allows to describe  explicitly   up to an error term,  operators such as  commutators between pseudo- differential operators as is described below:
\begin{prop}\label{prop1}
	For $a\in \mathbb{S}^{m}$ and $b\in \mathbb{S}^{r}$  we define the commutator $\left[\Psi_{a};\Psi_{b}\right]$ by  
	\begin{equation*}
	\left[\Psi_{a};\Psi_{b}\right]=\Psi_{a}\circ\Psi_{b}-\Psi_{b}\circ\Psi_{a}.
	\end{equation*}
	 Then, the operator  ${\displaystyle  \left[\Psi_{a};\Psi_{b}\right]\in \mathrm{OP}\mathbb{S}^{m+r-1},}$ has by principal   symbol the Poisson bracket, i.e, 
	\begin{equation*}
	 \sum_{|\alpha|=1}^{n}\frac{1}{2\pi i}\left(\partial_{\xi}^{\alpha}a\,\partial_{x}^{\alpha}b- \partial_{x}^{\alpha}a\,\partial_{\xi}^{\alpha}b\right)\,\, \mathrm{mod}\,\, \mathbb{S}^{m+r-2}.
	\end{equation*} 
\end{prop}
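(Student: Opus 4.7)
The plan is to deduce this directly from the composition theorem stated immediately before, applied to both orderings $\Psi_a\circ\Psi_b$ and $\Psi_b\circ\Psi_a$, and then subtract, watching the cancellation of the leading term.

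Concretely, by the composition theorem there exist symbols $c\in\mathbb{S}^{m+r}$ and $d\in\mathbb{S}^{m+r}$ with
$\Psi_a\circ\Psi_b=\Psi_c$ and $\Psi_b\circ\Psi_a=\Psi_d$, whose asymptotic expansions are
\begin{equation*}
c\sim \sum_{\alpha}\tfrac{(2\pi i)^{-|\alpha|}}{\alpha!}\,\partial_\xi^{\alpha}a\,\partial_x^{\alpha}b,\qquad
d\sim \sum_{\alpha}\tfrac{(2\pi i)^{-|\alpha|}}{\alpha!}\,\partial_\xi^{\alpha}b\,\partial_x^{\alpha}a .
\end{equation*}
Applying the quantitative version of this asymptotic with $N=2$ to each of $c$ and $d$ yields
$c-ab-\sum_{|\alpha|=1}\tfrac{1}{2\pi i}\partial_\xi^{\alpha}a\,\partial_x^{\alpha}b\in\mathbb{S}^{m+r-2}$ and the analogous statement for $d$. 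Subtracting, the $|\alpha|=0$ contributions $ab$ and $ba$ cancel, so
\begin{equation*}
(c-d)-\sum_{|\alpha|=1}\tfrac{1}{2\pi i}\bigl(\partial_\xi^{\alpha}a\,\partial_x^{\alpha}b-\partial_\xi^{\alpha}b\,\partial_x^{\alpha}a\bigr)\ \in\ \mathbb{S}^{m+r-2}.
\end{equation*}

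Next I would check the class of the sum on the left. For a multi-index $\alpha$ with $|\alpha|=1$ the remark after the composition theorem gives $\partial_\xi^{\alpha}a\,\partial_x^{\alpha}b\in\mathbb{S}^{m+r-1}$, and likewise for the other term; thus the Poisson-bracket sum lies in $\mathbb{S}^{m+r-1}$. Combined with the displayed containment this shows first that $c-d\in\mathbb{S}^{m+r-1}$, hence $[\Psi_a;\Psi_b]=\Psi_{c-d}\in\mathrm{OP}\mathbb{S}^{m+r-1}$, and second that its principal symbol modulo $\mathbb{S}^{m+r-2}$ is exactly
\begin{equation*}
\sum_{|\alpha|=1}\tfrac{1}{2\pi i}\bigl(\partial_\xi^{\alpha}a\,\partial_x^{\alpha}b-\partial_x^{\alpha}a\,\partial_\xi^{\alpha}b\bigr),
\end{equation*}
as claimed in the statement.

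Since every analytical step (existence and asymptotic expansion of the composition symbol, classification of products of derivatives of symbols) has already been recorded, there is no real obstacle; the only point requiring care is the bookkeeping of the two asymptotic expansions and the cancellation of the zeroth-order terms, which is precisely what produces a symbol one order lower than the naive product and reveals the Poisson bracket structure. I would present the argument essentially as above, invoking the composition theorem by name and then carrying out the subtraction term by term.
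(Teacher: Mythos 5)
Your argument is correct and is exactly the derivation the paper intends; the paper states the proposition as ``a direct consequence of the decomposition above'' (i.e.\ the composition theorem quoted from Stein) and gives no written proof, so your two-expansion-and-subtract computation, with the cancellation of the zeroth-order terms $ab$ and $ba$ exposing the Poisson bracket as the new leading term, fills in precisely the intended gap.
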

Also,  certain  class the pseudo-differential operators enjoy of some continuity properties  as the described below.
\begin{thm}\label{zkth1}
	Suppose that $a$ is symbol with  $a\in \mathbb{S}^{0}.$ Then,  the operator $\Psi_{a}$ given by 
		\begin{equation*}
	(\Psi_{a} f)(x)=\int_{\mathbb{R}^{n}}e^{2\pi\mathrm{i}x\cdot\xi}a(x,\xi)\widehat{f}(\xi)\,\mathrm{d}\xi,
	\end{equation*}
	  initially defined on $\mathcal{S}(\mathbb{R}^{n}), $ extends  to a bounded  operator from $L^{2}(\mathbb{R}^{n}) $ to itself.
\end{thm}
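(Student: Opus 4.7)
The plan is to prove $L^2$ boundedness by the Cotlar--Stein almost orthogonality principle, after decomposing the symbol dyadically in frequency. Let $\{\chi_j\}_{j\geq 0}$ be a standard Littlewood--Paley partition of unity on $\mathbb{R}^n$ with $\mathrm{supp}\,\chi_0\subset\{|\xi|\leq 2\}$ and $\mathrm{supp}\,\chi_j\subset\{2^{j-1}\leq|\xi|\leq 2^{j+1}\}$ for $j\geq 1$. Set $a_j(x,\xi):=a(x,\xi)\chi_j(\xi)$ and $T_j:=\Psi_{a_j}$. Each $a_j$ lies in $\mathbb{S}^0$ with $\xi$-support in a dyadic shell of radius $\sim 2^j$, and $\Psi_a=\sum_j T_j$ in the sense of distributions.

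The first step is to bound each $T_j$ uniformly. Because $a_j$ has $\xi$-support of size $\sim 2^j$, the kernel $k_{a_j}$ supplied by Theorem \ref{rea} satisfies, after integration by parts in $\xi$,
\[ \bigl|k_{a_j}(x,x-y)\bigr|\lesssim_N 2^{jn}\bigl(1+2^j|x-y|\bigr)^{-N},\qquad N\geq 0, \]
with constants depending on only finitely many seminorms of $a$. Schur's test then gives $\sup_j\|T_j\|_{L^2\to L^2}\leq C$.

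The heart of the argument is the off-diagonal estimate: for $|j-k|\geq 2$ and every $N\geq 0$,
\[ \|T_j^{\ast}T_k\|_{L^2\to L^2}+\|T_jT_k^{\ast}\|_{L^2\to L^2}\lesssim_N 2^{-N|j-k|}. \]
By the adjoint theorem $T_j^{\ast}\in \mathrm{OP}\mathbb{S}^0$ with principal symbol $\overline{a_j}$, and by the composition theorem $T_j^{\ast}T_k\in \mathrm{OP}\mathbb{S}^0$. Because $\overline{a_j}$ and $a_k$ have disjoint $\xi$-supports when $|j-k|\geq 2$, every term in the asymptotic expansion of the composed symbol vanishes, so that the full symbol of $T_j^{\ast}T_k$ belongs to $\mathbb{S}^{-M}$ for all $M\geq 0$. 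A quantitative version of Theorem \ref{rea}, obtained by integrating by parts against the non-stationary phase $(x-y)\cdot\xi$, then converts this smoothing into the claimed $2^{-N|j-k|}$ decay in operator norm; the estimate for $T_jT_k^{\ast}$ is analogous.

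To conclude, the Cotlar--Stein lemma applied with $\gamma(m):=C\,2^{-|m|}$ yields
\[ \Bigl\|\sum_{j} T_j\Bigr\|_{L^2\to L^2}\leq \sup_j\sum_k \gamma(j-k)^{1/2}<\infty, \]
and a standard density argument extends the bound from $\mathcal{S}(\mathbb{R}^n)$ to $L^2(\mathbb{R}^n)$. The main obstacle is the off-diagonal step: although the adjoint and composition theorems already place $T_j^{\ast}T_k$ in $\mathrm{OP}\mathbb{S}^0$, extracting the explicit geometric decay in $|j-k|$ requires a careful re-examination of the oscillatory integral defining the composition, integrating by parts in $\xi$ a variable number of times with sharp tracking of the seminorms of $a$ so that the Cotlar--Stein hypothesis is verified with a summable weight.
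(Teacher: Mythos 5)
The paper itself does not prove Theorem \ref{zkth1}; it simply cites Stein, \emph{Harmonic Analysis}, Chapter~VI. Your proposal is therefore a genuine proof attempt, and the Littlewood--Paley plus Cotlar--Stein strategy you outline is indeed one of the classical routes to $L^2$-boundedness for $\mathrm{OP}\mathbb{S}^0$. The uniform single-block bound is fine: the kernel estimate $|k_{a_j}(x,z)|\lesssim_N 2^{jn}(1+2^j|z|)^{-N}$ follows from the symbol estimates by integration by parts on a dyadic shell, and Schur's test closes that step.

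The gap is in the off-diagonal estimate, and it is genuine. First, half of the Cotlar--Stein hypothesis is actually trivial and you have overcomplicated it: since $T_j=\Psi_a P_j$ with $P_j$ the Littlewood--Paley projection, one has $T_jT_k^{\ast}=\Psi_a P_jP_k\Psi_a^{\ast}=0$ whenever $|j-k|\geq 2$, with no oscillatory-integral analysis required. For $T_j^{\ast}T_k=P_j\Psi_a^{\ast}\Psi_a P_k$, however, your argument does not close. The observation that each term of the asymptotic expansion of the composed symbol vanishes shows only that $T_j^{\ast}T_k$ is smoothing in the \emph{qualitative} sense, i.e.\ its symbol lies in $\mathbb{S}^{-M}$ for every $M$, but the $\mathbb{S}^{-M}$ seminorms produced by the composition theorem carry constants that grow with $j$ and $k$; nothing in that statement yields the needed uniform bound $\lesssim 2^{-N|j-k|}$. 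The phrase ``a quantitative version of Theorem \ref{rea}, obtained by integrating by parts against the non-stationary phase $(x-y)\cdot\xi$'' is not a theorem available in the paper, and the integration by parts you would like to perform in the $z$-variable of the representation
\[
\langle T_j^{\ast}T_k f,g\rangle=\int\!\!\int\!\!\int a_k(z,\eta)\,\overline{a_j(z,\xi)}\,\widehat f(\eta)\,\overline{\widehat g(\xi)}\,e^{2\pi i z\cdot(\eta-\xi)}\,\mathrm{d}\eta\,\mathrm{d}\xi\,\mathrm{d}z
\]
does not produce an absolutely convergent integral, because the symbols have no decay in $z$. Making this rigorous is precisely where the known proofs spend their effort (e.g.\ by also localizing in $x$, or by expanding the frequency-localized symbol in a Fourier series in $\xi$ as in Stein's argument), and your write-up acknowledges the obstacle without overcoming it. As it stands the proposal is an accurate high-level sketch of a valid strategy, but the Cotlar--Stein hypothesis is not verified.
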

\begin{proof}
	The proof can be consulted in Stein \cite{stein3} Chapter VI, Theorem 1.
\end{proof}
 Also, thorough our analysis it  will be necessary to provide upper bound for a certain commutator expressions. More precisely,  we will require the use of the following result.
\begin{thm}[Kato \& Ponce \cite{KATOP2}]\label{KPC}
	Let $s>0,$ and $ p\in(1,\infty).$  Then, for  $f,g\in \mathcal{S}(\mathbb{R}^{n})$ the following inequalities hold:
	\begin{equation}\label{zk2}
	\left\|\left[J^{s};f\right]g\right\|_{L^{2}}\lesssim \|\nabla f\|_{L^{\infty}}\|J^{s-1}g\|_{L^{2}}+\|J^{s}f\|_{L^{2}}\|g\|_{L^{\infty}},
	\end{equation}
	and 
	\begin{equation}\label{zk8}
	\|J^{s}(fg)\|_{L^{2}}\lesssim \|f\|_{L^{\infty}}\|J^{s}g\|_{L^{2}}+\|J^{s}f\|_{L^{2}}\|g\|_{L^{\infty}}.
	\end{equation}
\end{thm}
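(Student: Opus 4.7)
The plan is to prove both inequalities via the Littlewood--Paley / Bony paraproduct machinery, which is the standard approach to Kato--Ponce type estimates and fits naturally with the pseudo-differential framework already introduced. Fix a smooth dyadic partition of unity $1 = \psi_0(\xi) + \sum_{k\geq 1}\psi_k(\xi)$ with $\mathrm{supp}\,\psi_k \subset \{|\xi|\sim 2^k\}$, let $P_k = \psi_k(D)$ and $S_k = \sum_{j\leq k} P_j$. For $f,g\in \mathcal S(\mathbb R^n)$, Bony's decomposition writes
\begin{equation*}
fg \;=\; T_f g + T_g f + R(f,g), \qquad T_f g := \sum_{k} (S_{k-3} f)(P_k g),
\end{equation*}
where $R(f,g) = \sum_{|j-k|\leq 3}(P_j f)(P_k g)$ is the resonant part. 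The Fourier supports of $(S_{k-3}f)(P_k g)$ lie in an annulus $\{|\xi|\sim 2^k\}$, which lets $J^s$ interact cleanly with the decomposition.

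For the Leibniz estimate \eqref{zk8}, applying $J^s$ to $T_f g$ gives a sum of terms of the form $\langle\xi\rangle^s(S_{k-3}f)(P_k g)$ with output frequency $\sim 2^k$, so $\langle\xi\rangle^s \sim 2^{ks}$ on the support; the Littlewood--Paley square function characterization of $L^2$ together with the pointwise bound $|S_{k-3}f|\lesssim \|f\|_{L^\infty}$ yields $\|J^s T_f g\|_{L^2}\lesssim \|f\|_{L^\infty}\|J^s g\|_{L^2}$. The symmetric paraproduct $T_g f$ contributes $\|g\|_{L^\infty}\|J^s f\|_{L^2}$. The resonant piece $R(f,g)$ is handled by distributing $J^s$ to either factor via Bernstein's inequality and summing with Cauchy--Schwarz, which again lands within the claimed right-hand side.

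For the commutator estimate \eqref{zk2}, write $[J^s;f]g = J^s(fg) - f\,J^s g$. The paraproduct $T_g f$ and the resonant remainder $R(f,g)$ are controlled by the argument just described, contributing $\|J^s f\|_{L^2}\|g\|_{L^\infty}$. The delicate piece is $[J^s;f] g$ restricted to the low-high interaction $T_f g$: here neither $J^s(T_f g)$ nor $f\cdot J^s g$ is by itself of order $s-1$, and the gain must come from cancellation. On each dyadic block $(S_{k-3}f)(P_k g)$, Taylor-expand the symbol $\langle\xi+\eta\rangle^s$ in $\eta$ (frequency of $f$, which is $\ll 2^k$) around $\xi$ (frequency of $g$). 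The zeroth-order term cancels against $f\cdot J^s g$ on that block, while the first-order term produces a symbol of order $s-1$ acting on $P_k g$ with coefficient $\partial_{x_i} f$; the remainder is of order $s-2$. Summing in $k$ and invoking the $L^2$-boundedness of the resulting order-zero operator (Theorem~\ref{zkth1}) yields the bound $\|\nabla f\|_{L^\infty}\|J^{s-1}g\|_{L^2}$.

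The main obstacle is precisely this last cancellation step: a naive triangle inequality on $J^s(fg)-fJ^s g$ loses a full derivative, so one must encode the Taylor remainder of $\langle\xi\rangle^s$ as a genuine order-$(s-1)$ pseudo-differential symbol with coefficient $\nabla f$, and then appeal to Calder\'on--Vaillancourt / Theorem~\ref{zkth1} to close the $L^2$ bound. Equivalently, one can bypass Littlewood--Paley entirely by realizing $[J^s;f]$ as a singular integral through Theorem~\ref{rea} and estimating its kernel directly, but in either route the quantitative gain of one derivative is the crux of the argument.
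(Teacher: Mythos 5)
The paper does not prove Theorem~\ref{KPC} at all; it simply records the statement and cites Kato and Ponce~\cite{KATOP2}, where the argument runs through Coifman--Meyer multilinear operator bounds and a reduction to Calder\'on-type commutators rather than an explicit dyadic decomposition. Your sketch therefore supplies a genuinely different (and now more standard) route via Littlewood--Paley theory and Bony's paraproduct, and it correctly isolates the crux: in the low-high piece $T_f g$, both $J^s(T_f\,\cdot)$ and $T_f(J^s\,\cdot)$ are of order $s$, and the one-derivative gain in~\eqref{zk2} comes precisely from Taylor-expanding $\langle\xi+\eta\rangle^s$ in the low frequency $\eta$ of $f$ around the high frequency $\xi$ of $g$, so that the zeroth-order term cancels against $f\,J^sg$ block-by-block, the first-order term yields an order-$(s-1)$ symbol with coefficient $\nabla f$, and the remainder is lower order and handled by $L^2$-boundedness of order-zero operators. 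One point is glossed over: for the resonant piece $R(f,g)$, the phrase about ``distributing $J^s$ to either factor via Bernstein and Cauchy--Schwarz'' conceals a weighted Cauchy--Schwarz in the dyadic index in which the hypothesis $s>0$ is used to sum a geometric series (output frequencies of $P_kf\,\tilde P_k g$ fill a ball, not an annulus); without $s>0$ this term is unbounded, so that hypothesis is doing real work there. With that understood, the sketch is sound, and the paraproduct route has the advantage of making the frequency bookkeeping transparent and extending readily to $L^p$ versions, while the original Kato--Ponce argument is more economical if Coifman--Meyer boundedness is taken as given.
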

\begin{proof}
	See \cite{KATOP2}.
	\end{proof}
\section{Localization Tools}
In this section we intend to provide the necessary tools  to describe our results. The representation of pseudo-differential operators as the displayed in  \eqref{k1}  together with the decay property  \eqref{k2}  will find a wide range of applicability in our analysis.  
\begin{lem}\label{lem1}
	Let $\Psi_{a}\in\mathrm{OP\mathbb{S}^{r}}.$ Let  $ \alpha=\left(\alpha_{1},\alpha_{2},\dots,\alpha_{n}\right)$ be a multi-index with $|\alpha|\geq 0.$  If $f\in L^{2}(\mathbb{R}^{n})$ and $g\in L^{p}(\mathbb{R}^{n}),\, p\in [2,\infty]$  with 
	\begin{equation}\label{e16}
	\dist\left(\supp(f),\supp(g)\right)\geq \delta>0,
	\end{equation}
	then, 
	\begin{equation*}
	\left\|g\partial_{x}^{\alpha}\Psi_{a}f\right\|_{L^{2}}\lesssim \|g\|_{L^{p}}\|f\|_{L^{2}},
	\end{equation*}
	where $\partial_{x}^{\alpha}:= \partial_{x_{1}}^{\alpha_{1}}\dots\partial_{x_{n}}^{\alpha_{n}},.$ 
\end{lem}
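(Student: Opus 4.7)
The strategy is to turn the operator bound into a pointwise estimate via the kernel representation given by Theorem \ref{rea}, and then to conclude through H\"older's inequality and Young's convolution inequality. Since $\dist(\supp(f),\supp(g))\geq\delta>0$, every $x\in\supp(g)$ lies outside $\supp(f)$, and in particular $|x-y|\geq\delta$ for all $y\in\supp(f)$. Consequently, by \eqref{k1},
\begin{equation*}
\Psi_{a}f(x)=\int_{\supp(f)}k_{a}(x,x-y)\,f(y)\,\mathrm{d}y,\qquad x\in\supp(g),
\end{equation*}
and, as $k_{a}$ is smooth on $\mathbb{R}^{n}\times(\mathbb{R}^{n}\setminus\{0\})$, differentiation under the integral sign is legitimate on this set.

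By the chain rule, $\partial_{x}^{\alpha}\Psi_{a}f(x)$ is a finite linear combination of expressions of the form
\begin{equation*}
\int_{\supp(f)}(\partial_{x}^{\beta}\partial_{z}^{\gamma}k_{a})(x,x-y)\,f(y)\,\mathrm{d}y,\qquad \beta+\gamma=\alpha.
\end{equation*}
Applying \eqref{k2} with $N$ taken arbitrarily large, for every $M>0$ one has
\begin{equation*}
\left|(\partial_{x}^{\beta}\partial_{z}^{\gamma}k_{a})(x,z)\right|\lesssim_{\delta,M}|z|^{-M}\qquad\text{whenever } |z|\geq\delta.
\end{equation*}
Combining these observations yields, for every $x\in\supp(g)$,
\begin{equation*}
|\partial_{x}^{\alpha}\Psi_{a}f(x)|\lesssim_{M}(K_{\delta}*|f|)(x),\qquad K_{\delta}(z):=\mathbf{1}_{\{|z|\geq\delta\}}|z|^{-M}.
\end{equation*}

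To close the estimate I would apply H\"older's inequality with exponents $p$ and $q$ satisfying $\tfrac{1}{p}+\tfrac{1}{q}=\tfrac{1}{2}$, followed by Young's convolution inequality with $\tfrac{1}{\rho}+\tfrac{1}{2}=1+\tfrac{1}{q}$, so that $\rho=p/(p-1)$. This yields
\begin{equation*}
\|g\,\partial_{x}^{\alpha}\Psi_{a}f\|_{L^{2}}\leq \|g\|_{L^{p}}\|\mathbf{1}_{\supp(g)}\partial_{x}^{\alpha}\Psi_{a}f\|_{L^{q}}\lesssim \|g\|_{L^{p}}\|K_{\delta}\|_{L^{\rho}}\|f\|_{L^{2}}.
\end{equation*}
Since $\|K_{\delta}\|_{L^{\rho}}^{\rho}=\int_{|z|\geq\delta}|z|^{-M\rho}\,\mathrm{d}z<\infty$ whenever $M\rho>n$, choosing $M$ sufficiently large at the outset yields the lemma.

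The only technical point requiring care is this initial calibration of $M$ (equivalently, of the parameter $N$ in \eqref{k2}): it must be taken large enough, depending on $n$, $\rho=p/(p-1)$, the order $r$ of $\Psi_{a}$, and $|\alpha|$, so that $K_{\delta}\in L^{\rho}(\mathbb{R}^{n})$. Beyond this bookkeeping, the proof uses nothing more than the separation of supports, differentiation under the integral, and the standard off-diagonal smoothing property of pseudo-differential kernels.
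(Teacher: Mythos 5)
Your proof is correct and follows essentially the same route as the paper's: restrict to the off-diagonal region using the support separation, invoke the kernel representation and the decay estimate \eqref{k2} with $N$ taken large, and close via H\"older and Young with exactly the exponent bookkeeping $\tfrac{1}{p}+\tfrac{1}{q}=\tfrac{1}{2}$, $\rho=p/(p-1)$. The one cosmetic difference is that the paper starts from the representation of $\Psi_a\partial_x^\alpha f$ and integrates by parts to move $\partial_z^\alpha$ onto the kernel, whereas you differentiate $k_a(x,x-y)$ under the integral sign in $x$, producing the sum over $\beta+\gamma=\alpha$ of $\partial_x^\beta\partial_z^\gamma k_a$; both manoeuvres land in the same place, and yours is arguably the more direct match to the operator $\partial_x^\alpha\Psi_a$ appearing in the statement.
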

\begin{proof}
The proof follows as a direct application of the representation theorem \ref{rea}.  
In the following  we will consider $f\in \mathcal{S}(\mathbb{R}^{n}),$ the general case can be obtained by mollifying $f.$

%
  
  So that, in virtue of  \eqref{e16} and  Theorem \ref{rea}, the following representation holds 
  \begin{equation*}
  g(x)\Psi_{a}\partial_{x}^{\alpha}f(x)=\int_{\mathbb{R}^{n}}g(x) k_{a}(x,x-z)\partial_{z}^{\alpha}f(z)\,\mathrm{d}z,
  \end{equation*}
  where $k_{a}$ is the distributional kernel associated to $\Psi_{a}.$
  
 Next, integration by parts produce 
\begin{equation*}
\begin{split}
 g(x)\partial^{\alpha}_{x}\Psi_{a}f(x)&=(-1)^{|\alpha|}\int_{\{|x-z|\geq \delta\}}g(x)\partial_{z}^{\alpha} k_{a}(x,x-z)f(z)\,\mathrm{d}z.
 \end{split}
\end{equation*}
Finally, we  combine Young's inequality  to obtain 
\begin{equation*}
\begin{split}
\left\|g\partial^{\alpha}\Psi_{a}f\right\|_{L^{2}}
&\lesssim_{\alpha,r,N} \left\|g\left(\left(\frac{\mathbb{1}_{\{|\cdot|\geq \delta\}}}{|\cdot|^{n+r+|\alpha|+N}}\right)*f\right)\right\|_{L^{2}}\\
&\leq c_{\alpha,r,N}\left\|g\right\|_{L^{p_{1}}}\left\|\frac{\mathbb{1}_{\{|\cdot|\geq \delta\}}}{|\cdot|^{n+r+|\alpha|+N}}*f\right\|_{L^{p_{2}}}\\
&\lesssim_{\alpha,r,N}\left\|g\right\|_{L^{p_{1}}} \|f\|_{L^{2}}\left\|\frac{\mathbb{1}_{\{|\cdot|\geq \delta\}}}{|\cdot|^{n+r+|\alpha|+N}}\right\|_{L^{p_{3}}}
\end{split}
\end{equation*}
where the indexes $p_{1},p_{2},$ and $p_{3}$ satisfy:
\begin{equation*}
\frac{1}{p_{1}}+\frac{1}{p_{2}}=\frac{1}{2} \quad \mbox{and} \quad \frac{1}{2}+\frac{1}{p_{2}}=\frac{1}{p_{3}},
\end{equation*}
with $2\leq p_{1},p_{2}\leq \infty$ and $1\leq p_{3}\leq 2.$

Hence, after choosing $N$  properly  we get finally
\begin{equation*}
\left\|g\partial_{x}^{\alpha}\Psi_{a}f\right\|_{L^{2}}\lesssim_{n,r,\delta,\alpha} \|g\|_{L^{p_{1}}}\|f\|_{L^{2}}.
\end{equation*}
\end{proof}
The first  result that incorporate  the use of the previous  considerations becomes summarized in the following lemma.
\begin{lem}\label{zk19}
	Let $\Psi_{a}\in \mathrm{OP\mathbb{S}^{0}}.$ Assume that 
$ f\in H^{s}(\mathbb{R}^{n}),\, s<0.$   If   $\theta_{1}f\in L^{2}(\mathbb{R}^{n}),$ then 
	\begin{equation*}
	\theta_{2}\Psi_{a} f\in L^{2}(\mathbb{R}^{n}).
	\end{equation*}
\end{lem}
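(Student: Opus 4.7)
\emph{Proof proposal.} The plan is to split $f$ via the cutoff $\theta_{1}$ and treat the two resulting pieces with different tools. The standing geometric hypothesis on the cutoffs (implicit in the statement, and in the same spirit as Lemma~\ref{lem1}) is that $\theta_{1}, \theta_{2}$ are smooth bounded functions with $\theta_{1}\equiv 1$ on a neighborhood of $\supp(\theta_{2})$, so that
\[
\dist\bigl(\supp(\theta_{2}),\ \supp(1-\theta_{1})\bigr)\geq \delta>0.
\]
With this, I would write
\[
\theta_{2}\Psi_{a}f \;=\; \theta_{2}\Psi_{a}(\theta_{1}f)\;+\;\theta_{2}\Psi_{a}\bigl((1-\theta_{1})f\bigr).
\]
For the first summand, the hypothesis $\theta_{1}f\in L^{2}(\mathbb{R}^{n})$ together with the $L^{2}$-boundedness of $\Psi_{a}\in \mathrm{OP}\mathbb{S}^{0}$ (Theorem~\ref{zkth1}) gives $\Psi_{a}(\theta_{1}f)\in L^{2}$, and multiplication by the bounded function $\theta_{2}$ keeps us in $L^{2}$.

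For the second summand the key tool is the kernel representation of Theorem~\ref{rea}. Because $\theta_{2}$ and $1-\theta_{1}$ are separated by $\delta>0$, the operator $T:=\theta_{2}\Psi_{a}(1-\theta_{1})$ has distributional kernel
\[
K(x,y) \;=\; \theta_{2}(x)\,k_{a}(x,x-y)\,\bigl(1-\theta_{1}(y)\bigr),
\]
supported in $\{|x-y|\geq \delta\}$. By \eqref{k2}, $K$ is smooth on its support and for every multi-index $\beta$ and every $N\geq 0$,
\[
\bigl|\partial_{y}^{\beta}K(x,y)\bigr|\;\lesssim_{\beta,N}\; |\theta_{2}(x)|\,(1+|x-y|)^{-N}.
\]
Hence $y\mapsto K(x,y)$ belongs to $\mathcal{S}(\mathbb{R}^{n})$ for each $x$, with every Schwartz seminorm controlled by $|\theta_{2}(x)|$. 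Interpreting $Tf$ through the $H^{s}$–$H^{-s}$ duality then yields the pointwise bound
\[
|Tf(x)|\;\leq\;\|f\|_{H^{s}}\,\|K(x,\cdot)\|_{H^{-s}},
\]
and since $-s>0$, the $H^{-s}$-norm is dominated by finitely many $L^{2}_{y}$-norms of derivatives of $K(x,\cdot)$, each bounded by a constant multiple of $|\theta_{2}(x)|$. Choosing (as usual) $\theta_{2}\in L^{2}$ — for instance compactly supported — this gives $Tf\in L^{2}(\mathbb{R}^{n})$, and combined with the first piece we obtain $\theta_{2}\Psi_{a}f\in L^{2}$.

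The main obstacle is the clean conversion of the pointwise kernel decay into uniform $H^{-s}$-control of $K(x,\cdot)$ when $-s$ is not an integer. I would handle this by picking an integer $m\geq -s$, estimating $\|K(x,\cdot)\|_{H^{m}}$ via the derivative bounds above, and invoking the continuous embedding $H^{m}\hookrightarrow H^{-s}$; an equivalent route is to work on the Fourier side and integrate by parts in $y$ to gain arbitrary polynomial decay in the frequency variable against the weight $\langle\eta\rangle^{-s}$. Either way, the two contributions assemble into the desired conclusion.
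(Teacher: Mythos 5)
Your decomposition and treatment of the first piece match the paper exactly, and the use of the kernel representation plus the separation of supports is the right idea. However, the way you estimate the second piece introduces a genuine gap. After writing $|Tf(x)|\leq\|f\|_{H^{s}}\|K(x,\cdot)\|_{H^{-s}}\lesssim|\theta_{2}(x)|\,\|f\|_{H^{s}}$, you are forced to add the extra hypothesis $\theta_{2}\in L^{2}(\mathbb{R}^{n})$. That is not available: in the way this lemma is actually applied (e.g.\ inside Lemma~\ref{zk37} and repeatedly in the proof of Lemma~\ref{A}), $\theta_{2}$ is a smooth cutoff equal to $1$ on a half-space $\mathcal{H}_{\{\sigma,\epsilon\}}$, which is bounded but certainly not square integrable. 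Moreover, the crude pointwise bound cannot be improved to force integrability: with $\theta_{2}$ and $1-\theta_{1}$ supported on parallel half-spaces separated by $\delta$, one can take $x$ to infinity \emph{along} the boundary and $|x-y|$ stays of size $\delta$, so $\|K(x,\cdot)\|_{H^{-s}}$ does not decay in those directions and $|\theta_{2}(x)|\cdot\text{const}$ is genuinely not $L^{2}$.

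The paper avoids this by keeping the spatial structure of $f$ visible rather than collapsing it to $\|f\|_{H^{s}}$. Picking $m\in\mathbb{Z}^{+}$ with $s>-2m$, it inserts $J^{2m}J^{-2m}$ and integrates by parts, writing the second piece as a finite sum of terms of the form
\[
\theta_{2}(x)\int_{\{|x-z|\geq\delta\}}\partial_{z}^{\gamma}k_{a}(x,x-z)\,\partial_{z}^{\beta-\gamma}\bigl((1-\theta_{1})(z)\bigr)\,J^{-2m}f(z)\,\mathrm{d}z,
\]
which is bounded in absolute value by $|\theta_{2}(x)|\,\bigl(K_{1}*g\bigr)(x)$ with $K_{1}(w)=C_{N}\mathbb{1}_{\{|w|\geq\delta\}}|w|^{-n-|\gamma|-N}\in L^{1}$ (uniformly in $x$, by Theorem~\ref{rea}(ii)) and $g=\partial^{\beta-\gamma}(1-\theta_{1})\cdot J^{-2m}f\in L^{2}$. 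Young's convolution inequality then gives $K_{1}*g\in L^{2}$, and only \emph{boundedness} of $\theta_{2}$ is needed at the last step. To repair your argument you would need to replace the inequality $|Tf(x)|\leq\|f\|_{H^{s}}\|K(x,\cdot)\|_{H^{-s}}$ with an estimate that retains the convolution against $J^{-2m}f$ (or an equivalent device), so that Young's inequality, rather than $\theta_{2}\in L^{2}$, closes the argument.
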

\begin{proof}
	Let   $\theta_{1},\theta_{2}$ be $ C^{\infty}(\mathbb{R}^{n})$  with bounded derivatives of all orders, such that:  $0 \leq \theta_{1},\theta_{2}\leq 1,$
	and  their respective supports satisfy:
	\begin{equation}\label{e1.2}
	\dist\left(\supp\left(1-\theta_{1}\right),\supp\left(\theta_{2}\right)\right)\geq \delta,
	\end{equation}
	for some $\delta>0$.
	
	First, we decompose   $f$ by incorporating  $\theta_{1}$ and $\theta_{2}$  as follows:
\begin{equation}\label{zk3}
\begin{split}
\langle \theta_{2}\Psi_{a} f,g\rangle_{\mathcal{S}',\mathcal{S}}&=\langle \theta_{2}\Psi_{a} \left(\theta_{1}f\right) +\theta_{2}\Psi_{a}\left(\left(1-\theta_{1}\right)f\right), g\rangle_{\mathcal{S}',\mathcal{S}} \\
&=\langle \theta_{2}\Psi_{a} \left(\theta_{1}f\right), g\rangle_{\mathcal{S}',\mathcal{S}}+ \langle \theta_{2}\Psi_{a}\left(\left(1-\theta_{1}\right)f\right), g\rangle_{\mathcal{S}',\mathcal{S}}
\end{split}
\end{equation}
for all $g\in \mathcal{S}(\mathbb{R}^{n}).$

In view that $\Psi_{a}\in \mathrm{OP} \mathbb{S}^{0}$  and $\theta_{1}f\in L^{2}(\mathbb{R}^{n}),$ it is clear that combining  Theorem \ref{zkth1} and hypothesis,  it follows that  
 \begin{equation*}
 \left\|\theta_{2}\Psi_{a}\left(\theta_{1}f\right)\right\|_{L^{2}}\lesssim \left\|\theta_{1}f\right\|_{L^{2}}<\infty.
\end{equation*}
Notice that the second term in  \eqref{zk3}  condensates all the information  of $f$  that does not behaves as a function.  In this sense, we proceed as  we   described previously  in \eqref{e1.3}, that is, for $g\in\mathcal{S}(\mathbb{R}^{n})$  
\begin{equation*}
\begin{split}
&\langle \theta_{2}\Psi_{a}\left(\left(1-\theta_{1})\right)f\right),g\rangle_{\mathcal{S}',\mathcal{S}}\\
&=\langle \Psi_{a}\left(\left(1-\theta_{1})\right)f\right),  \theta_{2}g\rangle_{\mathcal{S}',\mathcal{S}}\\
&=\int_{\mathbb{R}^{n}}\int_{\mathbb{R}^{n}}\int_{\mathbb{R}^{n}}a(x,\xi)e^{2\pi i (x-y)\cdot\xi}\theta_{2}(x)g(x)(1-\theta_{1})(y)f(y)\,\mathrm{d}y\,\mathrm{d}\xi\,\mathrm{d}x,
\end{split}
\end{equation*}
 so that,  after interpreting properly  in the distributional sense  together with  Theorem \ref{rea} yield  
\begin{equation*}
\begin{split}
\theta_{2}(x)\Psi_{a}\left(\left(1-\theta_{1}\right)f\right)(x) 
&= \int_{\mathbb{R}^{n}}\theta_{2}(x)k_{a}(x,x-z)(1-\theta_{1})(z)f(z)\,\mathrm{d}z\\
&=\int_{\{|x-z|\geq \delta\}}\theta_{2}(x)k_{a}(x,x-z)(1-\theta_{1})(z)f(z)\,\mathrm{d}z,
\end{split}
\end{equation*}
being the last  equality above  a consequence of \eqref{e1.2}.

Next, we choose $m\in \mathbb{Z}$   such that $s>-2m,$ then  it is clear that    $f\in H^{-2m}(\mathbb{R}^{n}).$ So that,  after applying integration by parts we get 
\begin{equation*}
\begin{split}
&\int_{B_{x}(\delta)^{c}}\theta_{2}(x)k_{a}(x,x-z)(1-\theta_{1})(z)f(z)\,\mathrm{d}z\\
&=\int_{B_{x}(\delta)^{c}}J^{2m}\left(\theta_{2}(x)k_{a}(x,x-z)(1-\theta_{1})(z)\right)J^{-2m}f(z)\,\mathrm{d}z\\
&=
\sum_{j,\beta,\gamma}c_{\beta,m,j,\beta,\gamma}\int_{B_{x}(\delta)^{c}}\!\!\theta_{2}(x)\partial_{z}^{\gamma}k_{a}(x,x-z)\partial_{z}^{\beta-\gamma}\left(\left(1-\theta_{1})(z)\right)\right)J^{-2m}f(z)\,\mathrm{d}z,
\end{split}
\end{equation*}
whence $B_{x}(\delta)$ denotes the open ball with center at $x$ and radius $\delta>0.$

The proof finish after  combining  Young's convolution  inequality  and the representation theorem \ref{rea}, whence we get  finally that 
$\theta_{2}\Psi_{a}\left(\left(1-\theta_{1}\right)f\right)\in L^{2}(\mathbb{R}^{n}).$
\end{proof}
In the next part   we will describe the operator  $J^{s},\, s>0$  when we restrict  on a certain class of half-spaces.

\begin{lem}\label{zk37}
	Let $f\in L^{2}(\mathbb{R}^{n})$  and  $\sigma=(\sigma_{1},\sigma_{2},\dots,\sigma_{n})\in \mathbb{R}^{n}$ such that $\sigma_{1}>0,$ for $j=1,2,\dots,n.$  Also assume   that   
	\begin{equation*}
	J^{s}f\in L^{2}\left(\mathcal{H}_{\{\sigma,\alpha\}}\right),\quad s>0.
	\end{equation*}
	Then, for any $\epsilon>0$ and any $r\in (0,s]$
	\begin{equation*}
	J^{r}f\in L^{2}\left(\mathcal{H}_{\{\sigma,\,\alpha+\epsilon\}}\right).
	\end{equation*}
\end{lem}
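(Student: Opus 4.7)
The plan is to reduce the conclusion to the $L^{2}(\mathbb{R}^{n})$-boundedness of $\theta_{2} J^{r} f$, where $\theta_{2}$ is a smooth cutoff satisfying $\theta_{2} \equiv 1$ on $\mathcal{H}_{\{\sigma, \alpha + \epsilon\}}$. If this is achieved, then $J^{r} f \in L^{2}(\mathcal{H}_{\{\sigma, \alpha + \epsilon\}})$ follows at once, since $|J^{r} f| = |\theta_{2} J^{r} f|$ on the half-space in question. The core idea is to split $J^{s} f$ according to whether it lives in the ``good'' region (where we have $L^{2}$ by hypothesis) or not, and then apply $\theta_{2} J^{r-s}$ to each piece:
\[
\theta_{2} J^{r} f \;=\; \theta_{2} J^{r-s}\bigl(\theta_{1} J^{s} f\bigr) \;+\; \theta_{2} J^{r-s}\bigl((1-\theta_{1}) J^{s} f\bigr).
\]

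To make this work I would select three nested cutoffs $\theta_{1}, \tilde\theta_{1}, \theta_{2} \in C^{\infty}(\mathbb{R}^{n})$ with bounded derivatives of all orders and values in $[0,1]$, arranged so that
\[
\supp(\theta_{2}) \subset \{\tilde\theta_{1} \equiv 1\} \subset \supp(\tilde\theta_{1}) \subset \{\theta_{1} \equiv 1\} \subset \supp(\theta_{1}) \subset \mathcal{H}_{\{\sigma,\alpha\}},
\]
with strictly positive $\sigma$-gap between each pair of successive transition strips. A concrete choice: $\theta_{1} \equiv 1$ on $\mathcal{H}_{\{\sigma, \alpha+\epsilon/6\}}$ with $\supp(\theta_{1}) \subset \mathcal{H}_{\{\sigma, \alpha + \epsilon/12\}}$; $\tilde\theta_{1} \equiv 1$ on $\mathcal{H}_{\{\sigma, \alpha + \epsilon/2\}}$ with $\supp(\tilde\theta_{1}) \subset \mathcal{H}_{\{\sigma, \alpha + \epsilon/3\}}$; and $\theta_{2} \equiv 1$ on $\mathcal{H}_{\{\sigma, \alpha + \epsilon\}}$ with $\supp(\theta_{2}) \subset \mathcal{H}_{\{\sigma, \alpha + 2\epsilon/3\}}$.

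For the first summand, the hypothesis yields $\theta_{1} J^{s} f \in L^{2}(\mathbb{R}^{n})$ since $\supp(\theta_{1}) \subset \mathcal{H}_{\{\sigma, \alpha\}}$; as $r - s \leq 0$, the Fourier multiplier $J^{r-s}$ lies in $\mathrm{OP}\mathbb{S}^{0}$ (its symbol $\langle\xi\rangle^{r-s}$ lies in $\mathbb{S}^{r-s} \subset \mathbb{S}^{0}$), hence is bounded on $L^{2}$ by Theorem \ref{zkth1}, and multiplication by the bounded function $\theta_{2}$ preserves $L^{2}$. For the second summand, set $g := (1-\theta_{1}) J^{s} f$; since $f \in L^{2}$, the distribution $J^{s} f$ belongs to $H^{-s}$, and multiplication by the smooth bounded function $1-\theta_{1}$ preserves this space, so $g \in H^{-s}$ with $-s<0$. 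On $\supp(\tilde\theta_{1})$ one has $\theta_{1} \equiv 1$, so $\tilde\theta_{1} g \equiv 0 \in L^{2}$, and $\dist(\supp(1-\tilde\theta_{1}), \supp(\theta_{2})) \geq \epsilon/(6|\sigma|) > 0$. Lemma \ref{zk19}, applied with $\Psi_{a} = J^{r-s}$, the negative-order distribution $g$, and the cutoff pair $(\tilde\theta_{1}, \theta_{2})$, then delivers $\theta_{2} J^{r-s} g \in L^{2}$, completing the argument.

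I expect the main obstacle to be the geometric bookkeeping of the cutoffs: one must simultaneously guarantee (i) $\supp(\theta_{1}) \subset \mathcal{H}_{\{\sigma,\alpha\}}$ to invoke the hypothesis, (ii) $\theta_{1} \equiv 1$ throughout $\supp(\tilde\theta_{1})$ so that the ``bad'' piece $\tilde\theta_{1}(1-\theta_{1}) J^{s} f$ vanishes identically, and (iii) $\supp(1-\tilde\theta_{1})$ stays a positive distance away from $\supp(\theta_{2})$ so that Lemma \ref{zk19} applies. Once this nesting is in place, the remaining estimates are routine consequences of the $L^{2}$-boundedness of order-zero pseudo-differential operators and of the off-diagonal kernel decay already encoded in Lemma \ref{zk19}.
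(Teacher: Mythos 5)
Your proof is correct, but it follows a genuinely different route than the paper's. The paper defines $F(z) = \theta_{2} J^{sz} f$ on the strip $\{0 \le \mathrm{Re}\,z \le 1\}$, verifies membership in $L^{2}$ on the two boundary lines (via Theorem \ref{zkth1} at $\mathrm{Re}\,z = 0$ and via Lemma \ref{zk19} at $\mathrm{Re}\,z = 1$), and concludes by the three lines lemma. You instead factor $J^{r} = J^{r-s} J^{s}$ and split $J^{s}f$ with a cutoff, handling the near piece by $L^{2}$-boundedness of the order-zero multiplier $J^{r-s}$ and the far piece by Lemma \ref{zk19} applied directly with the auxiliary cutoff pair $(\tilde\theta_{1}, \theta_{2})$. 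Both arguments lean on Lemma \ref{zk19} as the key off-support ingredient; the difference is that yours is entirely elementary, replacing complex interpolation by a single algebraic decomposition. This buys you something concrete: the three lines lemma requires bounds on the boundary lines that are (at least sub-exponentially) uniform in the imaginary parameter $\tau$, a point the paper passes over silently since the constants produced by Lemma \ref{zk19} for $J^{si\tau}$ grow with $\tau$ through the kernel estimates; your argument never touches the imaginary axis and so avoids this issue altogether. The price is an extra intermediate cutoff $\tilde\theta_{1}$ and the accompanying bookkeeping, but your nesting
\[
\supp(\theta_{2}) \subset \{\tilde\theta_{1} \equiv 1\} \subset \supp(\tilde\theta_{1}) \subset \{\theta_{1} \equiv 1\} \subset \supp(\theta_{1}) \subset \mathcal{H}_{\{\sigma,\alpha\}}
\]
with the concrete thresholds you chose does satisfy all three requirements (hypothesis applicability, vanishing of $\tilde\theta_{1}(1-\theta_{1})$, and positive $\sigma$-gap between $\supp(1-\tilde\theta_{1})$ and $\supp(\theta_{2})$), so the argument closes.
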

\begin{proof}
Let $\epsilon>0.$ In the following  $\theta_{1},\theta_{2}$ are  smooth functions with bounded derivatives satisfying:
 $0\leq \theta_{1},\theta_{2}\leq 1,$  
	\begin{equation*}
	\begin{split}
	\theta_{1}(x)= 
	\begin{cases} 
	1 & x\in \overline{\mathcal{H}_{\{\sigma,\beta+\frac{\epsilon}{4}\}}} \\
	 0 & x\in \mathcal{H}_{\{\sigma,\beta\}}^{c} 
	\end{cases}
	\qquad \mbox{and}\qquad
	\theta_{2}(x)= 
	\begin{cases} 
	1 & x\in \overline{\mathcal{H}_{\{\sigma,\beta+\epsilon\}}} \\
	0 & x\in \mathcal{H}_{\{\sigma,\beta+\frac{\epsilon}{2}\}}^{c} ,
	\end{cases}
	\end{split}
	\end{equation*}
so that, by construction it follows  that 
\begin{equation*}
\dist\left(\supp\left(1-\theta_{1}\right),\supp\left( \theta_{2}\right)\right)\geq\frac{\epsilon}{4|\sigma|}>0.
\end{equation*}
	Next, for $z\in \mathbb{C},$ we define the function
	\begin{equation*}
	F(z):= \theta_{2}J^{sz}f,\qquad z=\alpha+i\tau,\quad \alpha, \tau\in \mathbb{R}.
	\end{equation*}
 The function $F$ defines  a continuous function  at $\Omega:=\left\{z\in\mathbb{C}\,: 0<Re(z)<1\right\},$ as well as analytic in its interior.
	
	First, for $z=i\tau,\,\tau\in \mathbb{R},\, F(i\tau)=\theta_{2}J^{si\tau }f,$
that combined with theorem  \ref{zkth1} implies  that $ F(i\tau)\in L^{2}.$

Instead in the case $z=1+i\tau,\,\tau\in \mathbb{R}$  we have that 
$
F(1+i\tau)=\theta_{2}J^{i\tau }J^{s}f,$  then, by lemma \ref{zk19}  we get that 
$
F(1+i\tau)\in L^{2}$.

Finally, by  the three lines lemma  we obtain 
\begin{equation*}
\theta_{2}J^{\tau s}f\in L^{2}(\mathbb{R}^{n}),\quad  \mbox{for any} \quad \tau\in (0,1).
\end{equation*}	
\end{proof}
\begin{rem}
	The Lemma \ref{zk37} represents  a extension  to the $n-$ dimensional case, $n\geq 2$ of a one-dimensional  version  proved in \cite{KLPV}. 
\end{rem}
   In our analysis we  will encounter repeatedly   the operators $J^{s},\,s>0$ and $\partial_{x}^{\alpha},\ \alpha\in (\mathbb{Z}^{+})^{n}$  and it will be  essential for us  to establish  a relationship between them  when we restrict ourselves to  an specific class of subsets of $\mathbb{R}^{n}.$
\begin{lem}[Localization formulas]\label{A}
	Let $f\in L^{2}(\mathbb{R}^{n}).$ Let   $\sigma=(\sigma_{1},\sigma_{2},\dots,\sigma_{n})\in  \mathbb{R}^{n}$ a non-null vector  such that $\sigma_{j}\geq 0,\, j=1,2,\dots,n.$  Let $\epsilon>0,$   we consider  the function $\varphi_{\sigma,\epsilon}\in C^{\infty}(\mathbb{R}^{n})$   to satisfy: $0\leq\varphi_{\sigma,\epsilon} \leq 1,$  
	\begin{equation*}
	\varphi_{\sigma,\epsilon}(x)=
	\begin{cases}
	0\quad \mbox{if}\quad & x\in \mathcal{H}_{\left\{
	\sigma,\frac{\epsilon}{2}\right\}}^{c}\\
	1\quad \mbox{if}\quad & x\in\mathcal{H}_{\{\sigma,\epsilon\}}
	\end{cases}	
	\end{equation*}
	and the following increasing property:
	 for every multi-index $\alpha$ with $|\alpha|=1$
	\begin{equation*} 
	\partial^{\alpha}_{x}\varphi_{\sigma,\epsilon}(x)\geq 0,\quad x\in\mathbb{R}^{n}. 
	\end{equation*}
	\begin{itemize}
		\item[(I)] If  $m\in \mathbb{Z}^{+}$ and $\varphi_{\sigma,\epsilon}J^{m}f\in L^{2}(\mathbb{R}^{n}),$  then for  all $\epsilon'>2\epsilon$  and all multi-index $\alpha$ with   $0 \leq|\alpha|\leq m,$ the derivatives of  $f$ satisfy
		\begin{equation*}
		\varphi_{\sigma,\epsilon'}\partial^{\alpha}_{x}f\in L^{2}(\mathbb{R}^{n}).
		\end{equation*} 
		 
		\item[(II)]If $m\in \mathbb{Z}^{+}$ and $\varphi_{\sigma,\epsilon}\,\partial^{\alpha}_{x}f\in L^{2}(\mathbb{R}^{n})$ for all multi-index $\alpha$ with $ 0\leq |\alpha|\leq m,$  then for all $\epsilon'>2\epsilon$
		\begin{equation*}
		\varphi_{\sigma,\epsilon'}J^{m}f\in L^{2}(\mathbb{R}^{n}).
		\end{equation*}
		\item[(III)] If $s>0,$  and $J^{s}(\varphi_{\sigma,\epsilon}f)\in L^{2}(\mathbb{R}^{n}),$ then for any  $\epsilon'>2\epsilon$
		\begin{equation*}
		\varphi_{\sigma,\epsilon'}\,J^{s}f\in L^{2}(\mathbb{R}^{n}).
		\end{equation*}
		\item[(IV)] If $s>0,$  and  $\varphi_{\sigma,\epsilon}J^{s}f\in L^{2}(\mathbb{R}^{n}),$ then for any $\epsilon'>2\epsilon$
		\begin{equation*}
		J^{s}\left(\varphi_{\sigma,\epsilon'}f\right)\in L^{2}(\mathbb{R}^{n}).
		\end{equation*}
	\end{itemize}
\end{lem}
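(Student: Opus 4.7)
The plan is to dispatch all four parts by a single decomposition scheme that plays the hypothesis against a carefully placed auxiliary smooth cut-off. Fix $\epsilon'>2\epsilon$ and choose $\widetilde{\varphi}$ of the same half-space type satisfying $\widetilde{\varphi}\equiv 1$ on $\supp(\varphi_{\sigma,\epsilon'})$ and $\supp(\widetilde{\varphi})\subset\{\sigma\cdot x>\epsilon+\delta\}$ for some small $\delta>0$; both inclusions are available precisely because $\epsilon'/2>\epsilon$. The two geometric facts used throughout are (a) $\supp(\widetilde{\varphi})\subset\{\varphi_{\sigma,\epsilon}\equiv 1\}$, so any product $\widetilde{\varphi}\cdot g$ equals $\widetilde{\varphi}\,\varphi_{\sigma,\epsilon}g$ and hence inherits the hypothesis; and (b) $\dist\bigl(\supp(\varphi_{\sigma,\epsilon'}),\supp(1-\varphi_{\sigma,\epsilon})\bigr)\geq(\epsilon'/2-\epsilon)/|\sigma|>0$, which feeds the kernel-decay bounds of Lemma \ref{lem1} and its distributional version Lemma \ref{zk19}.

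For (I), insert $\mathrm{Id}=J^{-m}J^m$ and split $J^mf=\varphi_{\sigma,\epsilon}J^mf+(1-\varphi_{\sigma,\epsilon})J^mf$. Since $|\alpha|\leq m$, the operator $\partial_x^{\alpha}J^{-m}\in \mathrm{OP}\mathbb{S}^{|\alpha|-m}\subset\mathrm{OP}\mathbb{S}^{0}$ is $L^2$-bounded by Theorem \ref{zkth1}, which handles the first piece, while the second piece is only a tempered distribution and falls exactly under the hypotheses of Lemma \ref{zk19} with $\theta_1=\varphi_{\sigma,\epsilon}$, $\theta_2=\varphi_{\sigma,\epsilon'}$. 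Part (II) is the dual statement: Leibniz gives $\partial_x^{\alpha}(\widetilde{\varphi}f)=\sum_{\beta}\binom{\alpha}{\beta}\partial^{\beta}\widetilde{\varphi}\,\partial^{\alpha-\beta}f$, and by (a) each term rewrites as $\partial^{\beta}\widetilde{\varphi}\cdot(\varphi_{\sigma,\epsilon}\partial^{\alpha-\beta}f)\in L^{\infty}\cdot L^{2}\subset L^{2}$. Hence $\widetilde{\varphi}f\in H^m$ and $\varphi_{\sigma,\epsilon'}J^m(\widetilde{\varphi}f)\in L^2$, while the complement $\varphi_{\sigma,\epsilon'}J^m((1-\widetilde{\varphi})f)$ is controlled by Lemma \ref{lem1} applied with order $m$, $|\alpha|=0$, $p=\infty$, using the separated supports.

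Part (III) is a one-line decomposition $\varphi_{\sigma,\epsilon'}J^{s}f=\varphi_{\sigma,\epsilon'}J^{s}(\varphi_{\sigma,\epsilon}f)+\varphi_{\sigma,\epsilon'}J^{s}((1-\varphi_{\sigma,\epsilon})f)$, the first summand being in $L^2$ by hypothesis and the second by Lemma \ref{lem1} applied to $J^{s}\in\mathrm{OP}\mathbb{S}^{s}$. The main obstacle is (IV), since one must now produce a global $H^s$-statement for $\varphi_{\sigma,\epsilon'}f$ out of a purely local hypothesis on $J^{s}f$. The plan is to use the commutator identity
\[
J^{s}(\varphi_{\sigma,\epsilon'}f)=\varphi_{\sigma,\epsilon'}J^{s}f+[J^{s};\varphi_{\sigma,\epsilon'}]f.
\]
The first summand equals $\varphi_{\sigma,\epsilon'}\varphi_{\sigma,\epsilon}J^{s}f\in L^{2}$ by (a). For the commutator, Proposition \ref{prop1} writes $[J^{s};\varphi_{\sigma,\epsilon'}]=\Psi_{c_N}+R_N$, where every term of the asymptotic symbol $c_N\in\mathbb{S}^{s-1}$ carries a factor $\partial_x^{\alpha}\varphi_{\sigma,\epsilon'}$ with $|\alpha|\geq 1$ and therefore has $x$-support in the transition strip $\{\epsilon'/2\leq\sigma\cdot x\leq\epsilon'\}\subset\{\varphi_{\sigma,\epsilon}\equiv 1\}$, while $R_N\in\mathrm{OP}\mathbb{S}^{s-N}$ is $L^2$-bounded for $N\geq s$. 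The decisive input is Lemma \ref{zk37}, which upgrades the hypothesis to $J^{r}f\in L^{2}(\mathcal{H}_{\{\sigma,\epsilon+\eta\}})$ for every $r\in(0,s]$. Splitting $f=\chi f+(1-\chi)f$ along a cut-off $\chi\equiv 1$ on the transition strip and $\supp(\chi)\subset\{\sigma\cdot x>\epsilon\}$, the term $\Psi_{c_N}(\chi f)$ is bounded by feeding this upgraded hypothesis into the same decomposition at level $s-1$ (the base case $s\in(0,1]$ is immediate, since then $\Psi_{c_N}\in\mathrm{OP}\mathbb{S}^{0}$ is $L^2$-bounded and the argument closes directly), while $\Psi_{c_N}((1-\chi)f)$ is controlled by Lemma \ref{lem1} thanks to the separation between $\supp(1-\chi)$ and the $x$-support of $c_N$. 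The most delicate bookkeeping is this last inductive passage, which closes under the single condition $\epsilon'>2\epsilon$ provided all intermediate cut-off thicknesses are chosen compatibly.
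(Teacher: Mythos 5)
Parts (I) and (III) of your proposal follow the paper's argument essentially verbatim (same decompositions, same appeals to Theorem \ref{zkth1}, Lemma \ref{zk19} and Lemma \ref{lem1}), and your geometric preamble correctly isolates the two facts that drive everything. For part (II) you take a genuinely cleaner route: rather than writing $J^{m}=\sum_{|\alpha|\le m}c_{m,\alpha}\Psi_{m,\alpha}\partial_{x}^{\alpha}$ and commuting each $\mathrm{OP}\mathbb{S}^{0}$ factor past $\varphi_{\sigma,\epsilon'}$ as the paper does, you apply Leibniz to get $\widetilde{\varphi}f\in H^{m}$ outright (using that $\supp(\widetilde{\varphi})\subset\{\varphi_{\sigma,\epsilon}\equiv 1\}$), so $J^{m}(\widetilde{\varphi}f)\in L^{2}$ globally, and dispatch the complementary piece with Lemma \ref{lem1}; this removes one layer of pseudo-differential bookkeeping at no cost. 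For part (IV) you correctly identify the three key inputs (Lemma \ref{zk37}, Proposition \ref{prop1}, Lemma \ref{lem1}), but you organize them as a recursion on the order: split $f=\chi f+(1-\chi)f$, handle $\Psi_{c_N}((1-\chi)f)$ by kernel decay, and recurse on $\Psi_{c_N}(\chi f)$ at level $s-1$. The paper instead does a single symbol expansion $[J^{s};\varphi_{\sigma,\epsilon'}]\sim\sum_{1\le|\alpha|\le m}\partial_{x}^{\alpha}\varphi_{\sigma,\epsilon'}\,\Psi_{\eta_{\alpha,\beta}}J^{s-|\alpha|}$ plus an $\mathrm{OP}\mathbb{S}^{0}$ tail, and then localizes each \emph{distribution} $J^{s-|\alpha|}f$ (not $f$ itself) with one auxiliary cut-off $\theta_{\epsilon,\epsilon'}$ placed in the gap $(\epsilon,\epsilon'/2)$ together with Lemma \ref{zk37}. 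Both routes reach the conclusion, but the paper's one-shot expansion makes the threshold $\epsilon'>2\epsilon$ transparent — only the single open gap $(\epsilon,\epsilon'/2)$ has to house $\theta_{\epsilon,\epsilon'}$ — whereas your recursion requires nesting $\lceil s\rceil$ cut-off strips inside the same gap. That does close (since the strips only need to be non-empty and nested, not geometrically shrinking), but you assert the compatibility without exhibiting the nesting; a reader would have to supply it.
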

\begin{proof}[Proof of Lemma \ref{A}]		
		In the following we  will assume  that $f\in \mathcal{S}(\mathbb{R}^{n}),$ and the general case  follows by using a regularization device.
		
	\begin{flushleft}
		{\sc Proof of (i):}
	\end{flushleft}
	 Notice that for $\alpha-$ multi-index with $|\alpha|\leq m,$  the partial derivatives satisfy
	\begin{equation*}
	\begin{split}
	\partial_{x}^{\alpha}f(x)&=\left((2\pi i\xi)^{\alpha}\widehat{f}(\xi)\right)^{\check{}}(x)\\
	&=\Psi_{m,\alpha}J^{m}f(x),
	\end{split}
	\end{equation*}
	where 
	\begin{equation*}
	\Psi_{m,\alpha}f(x):= \int_{\mathbb{R}^{n}}e^{2\pi i x\cdot\xi}\left(\frac{(2\pi i)^{\alpha}}{\left(1+|\xi|^{2}\right)^{\frac{m}{2}}}\right)\widehat{f}(\xi)\,\mathrm{d}\xi;
	\end{equation*}
whence 	$\Psi_{m,\alpha}\in    \mathrm{OP}\mathbb{S}^{|\alpha|-m}\subset \mathrm{OP}\mathbb{S}^{0}.$ 

	Since, 
	  \begin{equation*}
	\dist\left(\supp\left(\varphi_{\sigma, \epsilon'}\right),\supp\left(1-\varphi_{\sigma,\epsilon}\right)\right)\geq \frac{\epsilon'-2\epsilon}{|\sigma|}>0,
	\end{equation*}
	we rewrite $	\varphi_{\sigma,\epsilon'}\partial_{x}^{\alpha}f$ as follows:
	\begin{equation*}
	\begin{split}
	\varphi_{\sigma,\epsilon'}\partial_{x}^{\alpha}f 
	&=\Psi_{m,\alpha}\left(\varphi_{\sigma, \epsilon'}J^{m}f\right)-\left[\Psi_{m,\alpha}; \varphi_{\sigma, \epsilon'}\right]\varphi_{\sigma,\epsilon}J^{m}f+ \varphi_{\sigma, \epsilon'}\Psi_{m,\alpha}\left(\left(1-\varphi_{\sigma,\epsilon}\right)J^{m}f\right)\\
	&=I+II+III.
	\end{split}
	\end{equation*}
	It is  straightforward to obtain from the hypothesis that
	\begin{equation*}
	\varphi_{\sigma, \epsilon'}J^{m}f\in L^{2}(\mathbb{R}^{n}),\quad \mbox{for all}\quad \epsilon'>2\epsilon.
	\end{equation*}
%
Hence,  in virtue of Theorem \ref{zkth1}
	\begin{equation*}
	\begin{split}
	\left\|I\right\|_{L^{2}}&=\left\|\Psi_{m,\alpha}\left(\varphi_{\sigma, \epsilon'}J^{m}f\right)\right\|_{L^{2}}\lesssim \left\|\varphi_{\sigma, \epsilon'}J^{m}f\right\|_{L^{2}}<\infty.
	\end{split}
	\end{equation*}
	With respect to $II$  we shall remark  that $\left[\Psi_{m,\alpha}; \varphi_{\sigma, \epsilon'}\right]\in \mathrm{OP}\mathbb{S}^{|\alpha|-m-1}\subset\mathrm{OP}\mathbb{S}^{0},$ thus  by Theorem \ref{zkth1} it is clear that 
	\begin{equation*} 
	\left\|II\right\|_{L^{2}}\lesssim \left\|\varphi_{\sigma,\epsilon}J^{m}f\right\|_{L^{2}},
	\end{equation*}
	being the term in the r.h.s bounded by hypothesis.
	
	  To estimate $III$ we notice that after  applying  an argument similar to the one used in the proof of lemma \ref{zk19}  we get   $\|III\|_{L^{2}}<\infty,$  so that, for the sake of brevity we will omit the details.
	  
	  Finally, we gather the estimates above  whence we obtain 
	\begin{equation*}
	\varphi_{\sigma,\epsilon'}\partial^{\alpha}_{x}f\in L^{2}(\mathbb{R}^{n}),\quad \mbox{for all}\quad  \epsilon'>2\epsilon,
	\end{equation*} 
	for all multi-index $\alpha$ satisfying $|\alpha|\leq m.$
\begin{flushleft}
	{\sc Proof of (ii):}
\end{flushleft}
First, notice that  for $f\in \mathcal{S}(\mathbb{R}^{n})$  the operator $J^{m}$ is  defined as   
 \begin{equation*}
 J^{m}f(x)=\int_{\mathbb{R}^{n}}e^{2\pi \mathrm{i}x\cdot\xi} \left(1+ |\xi|^{2}\right)^{\frac{m}{2}}\widehat{f}(\xi)\,\mathrm{d}\xi, \quad x\in \mathbb{R}^{n}.
 	\end{equation*} 
 Since  the symbol  associated to the   operator $J^{m}$ in the Fourier space corresponds  to  $\langle \xi\rangle ^{m},$ it is clear that 
\begin{equation}\label{z1}
\begin{split}
J^{m}f(x)
&=\sum_{|\alpha|\leq m}c_{m,\alpha}\Psi_{m,\alpha}\partial_{x}^{\alpha}f (x),
\end{split}
\end{equation}
where 
\begin{equation*}
\Psi_{m,\alpha}g(x):=\int_{\mathbb{R}^{n}} e^{2\pi\mathrm{i}x\cdot\xi}\left(\frac{(2\pi i\xi)^{\alpha}}{\left(1+|\xi|^{2}\right)^{\frac{m}{2}}}\right) \widehat{g}(\xi)\,\mathrm{d}\xi, \quad g\in \mathcal{S}(\mathbb{R}^{n}).
\end{equation*}
Notice that for any multi-index $\alpha\in (\mathbb{Z}^{+})^{n},$  the condition $|\alpha|-m\leq0$ implies that $\Psi_{m,\alpha}\in\mathrm{OP}\mathbb{S}^{|\alpha|-m}\subset \mathrm{OP}\mathbb{S}^{0}.$ 

So that, in virtue of Theorem \ref{zkth1} it is clear that $\Psi_{m,\alpha}:L^{2}(\mathbb{R}^{n})\longrightarrow L^{2}(\mathbb{R}^{n}).$

Now, we combine \eqref{z1} with the hypothesis whence we obtain 
for $\epsilon'>2\epsilon,$
\begin{equation}\label{z3}
\begin{split}
\varphi_{\sigma, \epsilon'}J^{m}f 
&=\sum_{|\alpha|\leq m} c_{\alpha,m}\Psi_{m,\alpha}(\varphi_{\sigma, \epsilon'}\partial_{x}^{\alpha}f)-\sum_{|\alpha|\leq m} c_{\alpha,m}\left[\Psi_{m,\alpha};\varphi_{\sigma, \epsilon'}\right]\varphi_{\sigma,\epsilon}\partial_{x}^{\alpha}f\\
&\quad +\sum_{|\alpha|\leq m} c_{\alpha,m}\varphi_{\sigma, \epsilon'}\Psi_{m,\alpha}\left(\left(1-\varphi_{\sigma,\epsilon}\right)\partial_{x}^{\alpha}f\right).
\end{split}
\end{equation}
By hypothesis $\varphi_{\sigma, \epsilon'}\partial_{x}^{\alpha}f\in L^{2}(\mathbb{R}^{n}),$   then by $L^{2}-$continuity of  $\Psi_{m,\alpha}$    we get 
 \begin{equation*}
\left\|\Psi_{m,\alpha}\left(\varphi_{\sigma, \epsilon'}\partial_{x}^{\alpha}f\right)\right\|_{L^{2}}\lesssim \left\|\varphi_{\sigma, \epsilon'}\partial_{x}^{\alpha}f\right\|_{L^{2}}<\infty.
\end{equation*}
Since   $\left[\Psi_{m,\alpha}; \varphi_{\sigma, \epsilon'}\right]\in \mathrm{OP}\mathbb{S}^{|\alpha|-m-1}\subset \mathrm{OP}\mathbb{S}^{0},$ then  by theorem \ref{zkth1}  it follows that 
\begin{equation*}
\left\|\left[\Psi_{m,\alpha}; \varphi_{\sigma, \epsilon'}\right]\varphi_{\sigma, \epsilon}\partial_{x}^{\alpha}f\right\|_{L^{2}}\lesssim \left\|\varphi_{\sigma, \epsilon}\partial_{x}^{\alpha}f\right\|_{L^{2}}<\infty,
\end{equation*} 
being this last inequality  a consequence of our hypothesis.

Finally, we focus our attention in  to bound the third term in the r.hs of \eqref{z3}, for that  we will take hand of the  following fact
\begin{equation}\label{z5}
\dist\left(\supp\left(\varphi_{\sigma, \epsilon'}\right),\supp\left(1-\varphi_{\sigma,\epsilon}\right)\right)\geq \frac{\epsilon'-2\epsilon}{|\sigma|}>0.
\end{equation}
Hence, an argument similar to the one used in the proof of lemma \ref{zk19} allow us to obtain that $\varphi_{\sigma, \epsilon'}\Psi_{m,\alpha}\left(\left(1-\varphi_{\sigma,\epsilon}\right)\partial_{x}^{\alpha}f\right)\in L^{2}(\mathbb{R}^{n}).$ 

In summary we have proved that for all $\epsilon'>2\epsilon,$
\begin{equation*}
\varphi_{\sigma, \epsilon'}J^{m}f\in L^{2}(\mathbb{R}^{n}).
\end{equation*}
\begin{flushleft}
	{\sc Proof of (iii):}
	\end{flushleft}
	Since   $J^{s}(\varphi_{\sigma,\epsilon}f)\in L^{2}(\mathbb{R}^{n}),$  it is clear that 
	\begin{equation}\label{eq1.1}
	\begin{split}
	\varphi_{\sigma,\epsilon'}J^{s}f&=\varphi_{\sigma,\epsilon'}J^{s}\left(\varphi_{\sigma,\epsilon}f+(1-\varphi_{\sigma,\epsilon})f\right)\\
	&=\varphi_{\sigma,\epsilon'}J^{s}\left(\varphi_{\sigma,\epsilon}f\right)+\varphi_{\sigma,\epsilon'}J^{s}\left((1-\varphi_{\sigma,\epsilon})f\right).\\
	\end{split}
	\end{equation}
So that, to handle the remainder term in (\ref{eq1.1}) it is sufficiently to consider  the relation between the supports of the function involved. More precisely,
\begin{equation*}
\supp(\varphi_{\sigma,\epsilon'})\subset \mathcal{H}_{\left\{\sigma,\frac{\epsilon'}{2}\right\}}\quad \mbox{and}\quad \supp(f(1-\varphi_{\sigma,\epsilon}))\subset \mathcal{H}_{\{\sigma,\epsilon\}}^{c},
\end{equation*}
which  implies 
\begin{equation*}
\dist\left(\supp\left(\varphi_{\sigma,\epsilon'}\right), \supp\left(1-\varphi_{\sigma,\epsilon}\right)\right)\geq\frac{\epsilon'-2\epsilon}{2|\sigma|}>0
\end{equation*}
Thus, by means of Lemma \ref{lem1} we obtain that
\begin{equation*}
\begin{split}
 \left\|\varphi_{\sigma,\epsilon'}J^{s}\left((1-\varphi_{\sigma,\epsilon})f\right)\right\|_{L^{2}}&\lesssim \left\|\varphi_{\sigma,\epsilon'}\right\|_{L^{\infty}}\left\|\left(1-\varphi_{\sigma,\epsilon}\right)f\right\|_{L^{2}}\lesssim \|f\|_{L^{2}}.
 \end{split}
\end{equation*}
  Finally, gathering the bounds above we get 
  \begin{equation*}
  \varphi_{\sigma,\epsilon'}J^{s}f,\quad \mbox{ for all}\quad  \epsilon'>2\epsilon, 
  \end{equation*}
  which finish the proof of {\sc (iii)}.
\begin{flushleft}
	{\sc Proof of (iv):}
	\end{flushleft}
Without loss of generality we can assume that  $s\in [m,m+1)$ where $m\in \mathbb{N}_{0}.$
 
First, we rewrite $J^{s}(\varphi_{\sigma, \epsilon'}f)$ as  
\begin{equation}\label{e7}
\begin{split}
J^{s}(\varphi_{\sigma, \epsilon'}f)&=\varphi_{\sigma, \epsilon'}J^{s}f+ \widetilde{\Psi_{\zeta_{s,\sigma,\epsilon'}}}f\\
\end{split}
\end{equation}
where  ${\displaystyle \widetilde{\Psi_{\zeta_{s,\sigma,\epsilon'}}}:=\left[J^{s}; \varphi_{\sigma, \epsilon'}\right] }.$

The decomposition above shows that    in order to obtain a bound for $ J^{s}(\varphi_{\sigma, \epsilon'}f)$ we   require  to estimate the commutator term. 
 

In the following  we will decompose the operator $\widetilde{\Psi_{\zeta_{s,\sigma,\epsilon'}}}$ whence  
 \begin{equation*}
 \widetilde{\Psi_{\zeta_{s,\sigma,\epsilon'}}}g(x)=\int_{\mathbb{R}^{n}}e^{2\pi ix\cdot \xi} \zeta_{s,\sigma,\epsilon'}(x,\xi)\, \widehat{g}(\xi)\,\mathrm{d}\xi,\qquad g\in \mathcal{S}(\mathbb{R}^{n}).
 \end{equation*}
 Next,  by   proposition \ref{prop1} we get 
  \begin{equation*}\label{}
\begin{split}
&\zeta_{s,\sigma,\epsilon'}(x,\xi)\\
&=\sum_{1\leq|\alpha|\leq m}\frac{(2\pi i)^{-|\alpha|}}{\alpha!}\left\{\partial^{\alpha}_{\xi}\left(\langle \xi\rangle^{s}\right)\partial_{x}^{\alpha}\varphi_{\sigma, \epsilon'}\right\}+\kappa_{s-m-1}(x,\xi)\\
&=\sum_{j=1}^{m}\sum_{|\alpha|= j}\frac{(2\pi i)^{-|\alpha|}}{\alpha!}\left\{\partial^{\alpha}_{\xi}\left(\langle \xi\rangle^{s}\right)\partial_{x}^{\alpha}\varphi_{\sigma, \epsilon'}\right\}+\kappa_{s-m-1}(x,\xi)\\
&=-\frac{s}{4\pi^{2}}\sum_{|\alpha|=1}(2\pi i\xi)^{\alpha}\langle\xi\rangle^{s-2}\partial_{x}^{\alpha}\varphi_{\sigma, \epsilon'}-\frac{s}{4\pi^{2}}\sum_{ \mathclap{\substack{|\alpha|=2\\\alpha=\alpha_{1}+\alpha_{2}\\|\alpha_{1}|=|\alpha_{2}|=1}}}\delta_{\alpha_{1},\alpha_{2}}\frac{1}{\alpha!}\partial_{x}^{\alpha}\varphi_{\sigma, \epsilon'}\langle \xi\rangle^{s-2}\\
&\quad +\frac{s(s-2)}{(2\pi)^{4}}\sum_{|\alpha|=2}\frac{1}{\alpha!}\partial_{x}^{\alpha}\varphi_{\sigma,\epsilon'}\left(2\pi i\xi\right)^{\alpha}\langle \xi \rangle^{s-4}\\
&\quad +
 \frac{s(s-2)}{(2\pi)^{4}}\sum_{|\alpha|=2}\frac{1}{\alpha!}\partial_{x}^{\alpha}\varphi_{\sigma, \epsilon'}(x)\left(2\pi i\xi\right)^{\alpha}\langle \xi \rangle^{s-4}\\
&
\quad + \cdots +\sum_{|\alpha|= m}\frac{(2\pi i)^{-|\alpha|}}{\alpha!}\left\{\partial^{\alpha}_{\xi}\left(\langle \xi\rangle^{s}\right)\partial_{x}^{\alpha}\varphi_{\sigma, \epsilon'}\right\}+\kappa_{s-m-1}(x,\xi),
\end{split}
\end{equation*}
where $\kappa_{s-m-1}\in \mathbb{S}^{m+1-s}\subset \mathbb{S}^{0},$ and for  this symbol we consider    the operator 
\begin{equation*}
\Theta_{\kappa_{s-m-1}}g(x):= \int_{\mathbb{R}^{n}}e^{2\pi i x\cdot \xi} \kappa_{s-m-1}(x,\xi)\, \widehat{g}(\xi)\,\mathrm{d}\xi,\qquad g\in \mathcal{S}(\mathbb{R}^{n}).
\end{equation*}
In view that $\kappa_{s-m-1}\in \mathbb{S}^{0},$ then by  Theorem \ref{zkth1}  is clear that 
$
\left\|\Theta_{\kappa_{s-m-1}}f\right\|_{L^{2}}\lesssim \|f\|_{L^{2}}.$
Also, for multi-index $\alpha,\beta $ with $\beta\leq \alpha$ we define  the symbol 
\begin{equation*}
\eta_{\alpha,\beta}(x,\xi)=\frac{(2\pi i\xi)^{\beta}}{\left(1+|\xi|^{2}\right)^{\frac{|\alpha|}{2}}},\qquad x,\xi\in\mathbb{R}^{n}.
\end{equation*}
Then, it is clear that   $\Psi_{\eta_{\alpha,\beta}}\in\mathrm{OP}\mathbb{S}^{0},$ whence
\begin{equation}\label{e11}
\Psi_{\eta_{\alpha,\beta}}g(x):=\int_{\mathbb{R}^{n}}e^{2\pi ix\cdot \xi}\eta_{\alpha,\beta}(x,\xi)\widehat{g}(\xi)\,\mathrm{d}\xi,\quad g\in \mathcal{S}(\mathbb{R}^{n});
\end{equation}
with this notation at hand,  we have 
\begin{equation}\label{e8}
\begin{split}
 \widetilde{\Psi_{\zeta_{s,\sigma,\epsilon'}}}f(x)=\sum_{j=1}^{m}\sum_{|\alpha|=j}\sum_{\beta\leq\alpha} c_{\alpha,\beta} \,\partial_{x}^{\alpha}\varphi_{\sigma,\epsilon'}(x)\Psi_{\eta_{\alpha,\beta}}J^{s-|\alpha|}f(x)+ \Theta_{\kappa_{s-m-1}}f(x).
\end{split}
\end{equation}
Notice that from \eqref{e8} the problem becomes  reduced  to   localize   $J^{s-|\alpha|}f,$ for every $\alpha\in (\mathbb{N}_{0})^{n}.$  In this sense,  we claim that there exist  a smooth  function $\theta_{\epsilon,\epsilon'}$  such that:
\begin{equation}\label{e1}
\supp\left(\theta_{\epsilon,\epsilon'}\right)\subset \mathcal{H}_{\left\{\sigma, \frac{\epsilon'+14\epsilon}{16}\right\}},
\end{equation}
as well as 
\begin{equation}\label{e2}
0\leq \theta_{\epsilon,\epsilon'} \leq 1 \qquad \mbox{and}\quad  \theta_{\epsilon,\epsilon'}\equiv 1\quad \mbox{on}\quad  \mathcal{H}_{\left\{\sigma, \frac{7\epsilon'+2\epsilon}{16}\right\}}.
\end{equation}
The interested reader can verify that it is enough  to consider  $\rho\in C^{\infty}_{0}(\mathbb{R}),\, \rho\geq 0,$  even,  with  $\supp(\rho)\subseteq (-1,1)$ and $\|\rho\|_{L^{1}}=1.$  Then, by  defining

	\begin{equation*}
\nu_{\epsilon,\epsilon'}(y)=
\begin{cases}
0\quad \mbox{if}\quad &  y\leq \frac{\epsilon'+6\epsilon}{8}\\
4\left(\frac{8 y-\epsilon'-6\epsilon}{\epsilon'-2\epsilon}\right)\quad \mbox{if}\quad & \frac{\epsilon'+6\epsilon}{8}< y<\frac{3\epsilon'+2\epsilon}{8}\\
1\quad\mbox{if}&  y\geq\frac{3\epsilon'+2\epsilon}{8},
\end{cases}	
\end{equation*}
and    $\rho_{\epsilon,\epsilon'}(y):=\frac{16}{\epsilon'-2\epsilon}\rho\left(\frac{16y}{\epsilon'-2\epsilon}\right)$ for all $y\in\mathbb{R}$ and $\epsilon'>2\epsilon,$  
we see that 
\begin{equation*}
\theta_{\epsilon,\epsilon'}(x)=\left(\rho_{\epsilon,\epsilon'}*\nu_{\epsilon,\epsilon'}\right)(\sigma\cdot x),\, x\in \mathbb{R}^{n}
\end{equation*}  
satisfy the  claimed properties.

Hence,
\begin{equation*}
\begin{split}
&\partial_{x}^{\alpha}\varphi_{\sigma, \epsilon'} \Psi_{\eta_{\alpha,\beta}}J^{s-|\alpha|}f\\
&=-\left[\Psi_{\eta_{\alpha,\beta}};\partial_{x}^{\alpha}\varphi_{\sigma, \epsilon'}\right]\theta_{\epsilon,\epsilon'
} J^{s-|\alpha|}f+\partial_{x}^{\alpha}\varphi_{\sigma, \epsilon'}\Psi_{\eta_{\alpha,\beta}}\left(\left(1-\theta_{\epsilon,\epsilon'}
\right)J^{s-|\alpha|}f\right)\\
&\quad +\Psi_{\eta_{\alpha,\beta}}\left( \partial_{x}^{\alpha}\varphi_{\sigma, \epsilon'}J^{s-|\alpha|}f\right)\\
&=I+II+III.
\end{split}
\end{equation*}
The first and the third term in the r.h.s above are  bounded in the $L^{2}-$norm. More precisely,  
\begin{equation*}
\|I\|_{L^{2}}=\left\|\left[\Psi_{\eta_{\alpha,\beta}};\partial_{x}^{\alpha}\varphi_{\sigma, \epsilon'}\right]\theta_{\epsilon,\epsilon'} J^{s-|\alpha|}f\right\|_{L^{2}}\lesssim \left\| \theta_{\epsilon,\epsilon'}J^{s-|\alpha|}f\right\|_{L^{2}}<\infty,
\end{equation*}
being the last inequality a consequence of  combining  lemma \ref{zk37} and Theorem \ref{zkth1}.

The third term  is handled  by using  that  $\Psi_{\eta_{\alpha,\beta}}\in \mathrm{OP}\mathbb{S}^{0},$ so that, combining  lemma \ref{zk37}, \eqref{e1}-\eqref{e2} and Theorem \ref{zkth1} we obtain 
\begin{equation*}
\begin{split}
\|III\|_{L^{2}}&=\left\|\Psi_{\eta_{\alpha,\beta}}\left( \partial_{x}^{\alpha}\varphi_{\sigma, \epsilon'}J^{s-|\alpha|}f\right)\right\|_{L^{2}}
\lesssim \left\| \partial_{x}^{\alpha}\varphi_{\sigma, \epsilon'}\right\|_{L^{\infty}}\left\| J^{s-|\alpha|}f\right\|_{L^{2}\left(\mathcal{H}_{\{\sigma,\epsilon\}}\right)} < \infty.
\end{split}
\end{equation*}
Notice that by construction, for any multi-index  $\alpha$ with $|\alpha|\geq 1$ the following relationship  holds
\begin{equation}\label{e4}
\dist\left(\supp\left(\left(1-\theta_{\epsilon,\epsilon'
}\right)\right), \supp\left(\partial_{x}^{\alpha}\varphi_{\sigma, \epsilon'}\right)\right)\geq \underbrace{\frac{(\epsilon'-2\epsilon)}{16|\sigma|}}_{=:\mu(\epsilon)}>0;
\end{equation}
this property will so that  by using a similar argument to the used in the proof of lemma \ref{zk19}  it is possible to prove    

For any multi-index $\alpha$ with $|\alpha|\leq s,$ we set $g_{s,\alpha}:=J^{s-|\alpha|}f,$ 
then, it is clear that   $g_{s,\alpha}\in H^{-(s-|\alpha|)}(\mathbb{R}^{n}).$

Thus, combining \eqref{e4} and  the Fourier transform properties we obtain 
\begin{equation*}
\begin{split}
&\partial_{x}^{\alpha}\varphi_{\sigma, \epsilon'}(x)\Psi_{\eta_{\alpha,\beta}}\left(\left(1-\theta_{\epsilon,\epsilon'
}\right)g_{s,\alpha}\right)(x)\\
&=\int_{\mathbb{R}^{n}}e^{2\pi ix\cdot \xi}\partial_{x}^{\alpha}\varphi_{\sigma, \epsilon'}(x) \eta_{\alpha,\beta}(x,\xi)\widehat{\left(\left(1-\theta_{\epsilon,\epsilon'}\right)g_{s,\alpha}\right)}(\xi)\,\mathrm{d}\xi\\
&=\int_{B_{x}(\mu(\epsilon))^{c}}\partial_{x}^{\alpha}\varphi_{\sigma, \epsilon'}(x)k_{\alpha,\beta}(x,x-y)\left(1-\theta_{\epsilon,\epsilon'}(y)\right) J^{2q}J^{-2q}g_{s,\alpha}(y)\,\mathrm{d}y,
\end{split}
\end{equation*}
where $q$ is chosen in such a way that $q\in \mathbb{Z}^{+}$ and
\begin{equation}\label{e3}
q\geq \max\left\{1,\ceil[\Big]{\frac{s-|\alpha|}{2}}\right\},
\end{equation}
thus, integration by parts yield
\begin{equation*}
\begin{split}
&\int_{B_{x}(\mu(\epsilon))^{c}}\partial_{x}^{\alpha}\varphi_{\sigma, \epsilon'}(x)k_{\alpha,\beta}(x,x-y)\left(1-\theta_{\epsilon,\epsilon'}(y)\right) J^{2q}J^{-2q}g_{s,\alpha}(y)\,\mathrm{d}y\\
&=
\sum_{j,\beta,\gamma}c_{\beta,\gamma,q,j} \partial_{x}^{\alpha}\varphi_{\sigma, \epsilon'}(x)\left( k_{\alpha,\beta,\gamma,\mu(\epsilon)}(x,\cdot)\ast\left(\left(\partial_{y}^{\beta-\gamma}(1-\theta_{\epsilon,\epsilon'})\right) J^{-2q}g_{s,\alpha}\right)\right)(x),
\end{split}
\end{equation*}
where ${\displaystyle  k_{\alpha,\beta,\gamma,\mu(\epsilon)}(x,\cdot):=\left(\mathbb{1}_{\left\{|\cdot|\geq \mu(\epsilon)\right\}}\partial_{y}^{\gamma}k_{\alpha,\beta}(x,\cdot)\right)}.$

Nevertheless, at this point two cases have to be  distinguished.
\begin{flushleft}
	\textbf{Case : $\gamma=\beta$}
\end{flushleft}
For this case we combine  Young's inequality  and \eqref{e3} to obtain
\begin{equation}\label{e5}
\begin{split}
\left\|\partial_{x}^{\alpha}\varphi_{\sigma, \epsilon'}\Psi_{\eta_{\alpha,\beta}}\left(\left(1-\theta_{\epsilon,\epsilon'}\right)g_{s,\alpha}\right)\right\|_{L^{2}}
&\lesssim_{\mu(\epsilon)}\sum_{j=1}^{2q}\sum_{|\beta|\leq 2j }\widetilde{c_{\beta,\beta,q,j,n,N}}\left\|\partial_{x}^{\alpha}\varphi_{\sigma, \epsilon'}\right\|_{L^{\infty}}\|f\|_{L^{2}}\\
&<\infty.
\end{split}
\end{equation}
\begin{flushleft}
	\textbf{Case : $\gamma\neq \beta$}
\end{flushleft}
Combining  \eqref{e3}, Young's inequality and theorem  \ref{zkth1}  imply that 
\begin{equation}\label{e6}
\begin{split}
&\left\|\partial_{x}^{\alpha}\varphi_{\sigma, \epsilon'}\Psi_{\eta_{\alpha,\beta}}\left(\left(1-\theta_{\epsilon,\epsilon'}\right)g_{s,\alpha}\right)\right\|_{L^{2}}\\
&\lesssim_{\mu(\epsilon)}\sum_{j=1}^{2q}\sum_{|\beta|\leq 2j }\sum_{0\leq\gamma\leq \beta}\widetilde{c_{\beta,\gamma,q,j,n,N}}\left\| \partial_{x}^{\alpha}\varphi_{\sigma, \epsilon'}\right\|_{L^{\infty}}\left\|\partial_{y}^{\beta-\gamma}(1-\theta_{\epsilon,\epsilon'})\right\|_{L^{\infty}}\left\| f\right\|_{L^{2}}.
\end{split}
\end{equation}
Gathering the estimates in \eqref{e5} and \eqref{e6} we   get
\begin{equation*}
\left\|II\right\|_{L^{2}}\lesssim_{\mu(\epsilon),n,N,s}\left\| \partial_{x}^{\alpha}\varphi_{\sigma, \epsilon'}\right\|_{L^{\infty}}\left\| f\right\|_{L^{2}}.
\end{equation*}
We shall remark that we are looking for  an upper  bound in the $L^{2}-$norm  of   $\widetilde{\Psi_{\zeta_{s,\sigma,\epsilon'}}}f.$ In this sense, we get  after going   back into \eqref{e8}  the following:
\begin{equation*}
\begin{split}
&\left\| \widetilde{\Psi_{\zeta_{s,\sigma,\epsilon'}}}f\right\|_{L^{2}}\\
&\leq \left\|\sum_{j=1}^{m}\sum_{|\alpha|=j}\sum_{\beta\leq\alpha} c_{\alpha,\beta}\partial_{x}^{\alpha}\varphi_{\sigma,\epsilon'}(x)\Psi_{\eta_{\alpha,\beta}}J^{s-|\alpha|}f\right\|_{L^{2}}+\left\| \Theta_{\kappa_{s-m-1}}f\right\|_{L^{2}}\\
&\lesssim_{\mu(\epsilon),n,N,s} \sum_{j=1}^{m}\sum_{|\alpha|=j}\sum_{\beta\leq\alpha} |c_{\alpha,\beta}|\left(\left\| \theta_{\epsilon,\epsilon'}J^{s-|\alpha|}f\right\|_{L^{2}}+\|f\|_{L^{2}}\right)\left(\left\| \partial_{x}^{\alpha}\varphi_{\sigma, \epsilon'}\right\|_{L^{\infty}}+1\right)\\
&\quad +\|f\|_{L^{2}}\\
&<\infty,
\end{split}
\end{equation*}
then, 
\begin{equation*}
\begin{split}
&\left\|J^{s}(\varphi_{\sigma,\epsilon'}f)\right\|_{L^{2}}\\
&\lesssim_{\mu(\epsilon),n,N,s} \left\|\varphi_{\sigma,\epsilon'}J^{s}f\right\|_{L^{2}}+\|f\|_{L^{2}}\\
&\quad +\sum_{j=1}^{m}\sum_{|\alpha|=j}\sum_{\beta\leq\alpha} |c_{\alpha,\beta}|\left(\left\| \theta_{\epsilon,\epsilon'}J^{s-|\alpha|}f\right\|_{L^{2}}+\|f\|_{L^{2}}\right)\left(\left\| \partial_{x}^{\alpha}\varphi_{\sigma, \epsilon'}\right\|_{L^{\infty}}+1\right)\\
&<\infty.
\end{split}
\end{equation*}
So that, we have proved that $\varphi_{\sigma,\epsilon}J^{s}f\in L^{2}$ implies that 
\begin{equation*}
J^{s}(\varphi_{\sigma, \epsilon'}f)\in L^{2}(\mathbb{R}^{n}), \quad \mbox{for all}\quad \epsilon'>2\epsilon.
\end{equation*}
\end{proof}
\begin{rem}
	At this point several  comments about this lemma should be presented due to its relevance.
	\begin{itemize}
		\item [(i)]To our knowledge the first version of this "localization formulas" were presented in \cite{KLPV} in the study of propagation of regularity in solutions of the $k-$generalized  KdV  equation. 
		\item[(ii)] Also 	we shall emphasize that the lemma  above  also holds when we consider a  " direction "  $\sigma$  with  different conditions  to the emphasized above. As we shall see later a more general version of this lemma also holds  when we consider different   regions of the space instead of just merely half-spaces.
	\end{itemize}
\end{rem}
\begin{rem}
	A more general version of this lemma also hold  when we consider  a wider class of domains where is required to localize the regularity. More precisely, the following  localization result is also  true.
\end{rem}
\begin{lem}\label{lemm}
	Let $f\in L^{2}(\mathbb{R}^{n}).$ If   $\theta_{1}, \theta_{2}\in C^{\infty}(\mathbb{R}^{n})$  are functions such that:  $0\leq \theta_{1},\theta_{2}\leq 1,$   their  respective supports satisfy
	\begin{equation*}
	\dist\left(\supp\left(1-\theta_{1}\right), \supp\left(\theta_{2}\right)\right)\geq \delta,
	\end{equation*}
	for some positive number  $\delta,$ and for all multi-index $\beta,$   the functions $\partial_{x}^{\beta}\theta_{1},\partial_{x}^{\beta}\theta_{2}\in L^{\infty}(\mathbb{R}^{n}).$
	
	Then, the following identity  holds:
	\begin{itemize}
		\item[(I)] If  $m\in \mathbb{Z}^{+}$ and $\theta_{1}J^{m}f\in L^{2}(\mathbb{R}^{n}),$  then for  all multi-index $\alpha$ with   $0 \leq|\alpha|\leq m,$ the derivatives of  $f$ satisfy
		\begin{equation*}
		\theta_{2}\partial^{\alpha}_{x}f\in L^{2}(\mathbb{R}^{n}).
		\end{equation*} 
		
		\item[(II)]If $m\in \mathbb{Z}^{+}$ and $\theta_{1}\,\partial^{\alpha}_{x}f\in L^{2}(\mathbb{R}^{n})$ for all multi-index $\alpha$ with $ 0\leq |\alpha|\leq m,$  then 
		\begin{equation*}
		\theta_{2}J^{m}f\in L^{2}(\mathbb{R}^{n}).
		\end{equation*}
		\item[(III)] If $s>0,$  and $J^{s}(\theta_{1}f)\in L^{2}(\mathbb{R}^{n}),$ then 
		\begin{equation*}
		\theta_{2}\,J^{s}f\in L^{2}(\mathbb{R}^{n}).
		\end{equation*}
		\item[(IV)] If $s>0,$  and  $\theta_{1}J^{s}f\in L^{2}(\mathbb{R}^{n}),$ then 
		\begin{equation*}
		J^{s}\left(\theta_{2}f\right)\in L^{2}(\mathbb{R}^{n}).
		\end{equation*}
	\end{itemize}
\end{lem}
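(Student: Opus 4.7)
Plan: I would show that Lemma \ref{lemm} follows the same strategy as the proof of Lemma \ref{A}, replacing the specific pair $(\varphi_{\sigma,\epsilon},\varphi_{\sigma,\epsilon'})$ by the generic pair $(\theta_1,\theta_2)$ and replacing the explicit separation $(\epsilon'-2\epsilon)/|\sigma|$ by $\delta$. The observation that makes this replacement possible is that the proof of Lemma \ref{A} uses only three features of the cut-offs: smoothness with bounded derivatives of all orders, the existence of a positive distance between $\mathrm{supp}(1-\varphi_{\sigma,\epsilon})$ and $\mathrm{supp}(\varphi_{\sigma,\epsilon'})$, and the consequence $\varphi_{\sigma,\epsilon}\equiv 1$ on $\mathrm{supp}(\varphi_{\sigma,\epsilon'})$. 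All three hold for $(\theta_1,\theta_2)$ by hypothesis: indeed the distance condition forces $\theta_1\equiv 1$ on $\mathrm{supp}(\theta_2)$, so in particular $\theta_2 = \theta_1\theta_2$ and $\theta_1 J^m f\in L^2$ immediately implies $\theta_2 J^m f\in L^2$.

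For (I), I would use the decomposition
\[
\theta_2\partial_x^{\alpha}f \;=\; \Psi_{m,\alpha}(\theta_2 J^m f)\;-\;[\Psi_{m,\alpha};\theta_2]\,\theta_1 J^m f \;+\;\theta_2\Psi_{m,\alpha}\bigl((1-\theta_1)J^m f\bigr),
\]
where $\Psi_{m,\alpha}\in\mathrm{OP}\mathbb{S}^{|\alpha|-m}\subset\mathrm{OP}\mathbb{S}^0$. The first two summands are controlled in $L^2$ by Theorem \ref{zkth1}, using $\theta_2 J^m f\in L^2$ and $[\Psi_{m,\alpha};\theta_2]\in\mathrm{OP}\mathbb{S}^{-1}\subset\mathrm{OP}\mathbb{S}^0$; the third is handled by Lemma \ref{lem1}, whose hypothesis on separation of supports is met since $\mathrm{dist}(\mathrm{supp}(1-\theta_1),\mathrm{supp}(\theta_2))\geq\delta$. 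Parts (II) and (III) are handled in the same spirit as in the proof of Lemma \ref{A}: for (II) one expands $J^m = \sum_{|\alpha|\leq m}c_{m,\alpha}\Psi_{m,\alpha}\partial_x^{\alpha}$ and splits each term according to $\theta_1$ and $1-\theta_1$; for (III) one decomposes $\theta_2 J^s f = \theta_2 J^s(\theta_1 f) + \theta_2 J^s((1-\theta_1)f)$, where the second piece is covered by Lemma \ref{lem1}.

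The main obstacle is (IV), which in the proof of Lemma \ref{A} required an explicit intermediate cut-off $\theta_{\epsilon,\epsilon'}$ exploiting the half-space geometry. In the general setting I would instead construct an auxiliary $\eta\in C^{\infty}(\mathbb{R}^n)$ with bounded derivatives of all orders such that $\eta\equiv 1$ on $\mathrm{supp}(\theta_2)$, $\mathrm{dist}(\mathrm{supp}(1-\eta),\mathrm{supp}(\theta_2))\geq \delta/3$ and $\mathrm{dist}(\mathrm{supp}(1-\theta_1),\mathrm{supp}(\eta))\geq\delta/3$. Such an $\eta$ is obtained by mollifying the characteristic function of a $\delta/3$–neighborhood of $\mathrm{supp}(\theta_2)$ with a standard bump of radius $\delta/9$. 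Writing $J^s(\theta_2 f) = \theta_2 J^s f + [J^s;\theta_2]f$ and expanding $[J^s;\theta_2]$ through Proposition \ref{prop1} up to order $m$ with $s\in[m,m+1)$, each principal term has the shape $\partial_x^{\alpha}\theta_2\,\Psi_{\eta_{\alpha,\beta}}J^{s-|\alpha|}f$ with $\Psi_{\eta_{\alpha,\beta}}\in\mathrm{OP}\mathbb{S}^0$, while the remainder lies in $\mathrm{OP}\mathbb{S}^0$ and is therefore $L^2$–bounded by Theorem \ref{zkth1}. I would split $J^{s-|\alpha|}f = \eta J^{s-|\alpha|}f + (1-\eta)J^{s-|\alpha|}f$; the first piece is in $L^2$ by Lemma \ref{zk37} applied to $\theta_1 J^s f\in L^2$ together with $\theta_1\equiv 1$ on $\mathrm{supp}(\eta)$, and the second is handled via Lemma \ref{lem1} (or directly via the kernel representation of Theorem \ref{rea} combined with the $\delta/3$ separation between $\mathrm{supp}(\partial_x^{\alpha}\theta_2)$ and $\mathrm{supp}(1-\eta)$) exactly as in the estimates used in the proof of Lemma \ref{A}. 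Collecting these bounds completes (IV). The hard step is verifying that this auxiliary $\eta$ can be produced without any geometric assumption on $\theta_2$ beyond smoothness and bounded derivatives; the mollification above does the job uniformly in the shape of the supports.
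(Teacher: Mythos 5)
Your overall route is the one the paper intends: the authors state only that Lemma \ref{lemm} ``follows by using an argument similar to the one used in the proof of Lemma \ref{A}'' and give no details, so filling in those details is exactly what is required, and you do so faithfully. The crucial observation you make --- that $\dist(\supp(1-\theta_{1}),\supp(\theta_{2}))\geq\delta$ forces $\theta_{1}\equiv 1$ on $\supp(\theta_{2})$, hence $\theta_{2}=\theta_{1}\theta_{2}$ --- is correct and is precisely what replaces the relation between $\varphi_{\sigma,\epsilon}$ and $\varphi_{\sigma,\epsilon'}$; likewise, the construction of the auxiliary cutoff $\eta$ by mollifying the indicator of a $\delta/3$--neighborhood of $\supp(\theta_{2})$ is the genuinely new ingredient needed to replace the explicit intermediate function $\theta_{\epsilon,\epsilon'}$ used in the proof of Lemma \ref{A}(iv), and it works uniformly in the geometry of the supports because $\partial^{\beta}(\mathbb{1}_{A}\ast\rho)=\mathbb{1}_{A}\ast\partial^{\beta}\rho$ has $L^{\infty}$ norm bounded by $\|\partial^{\beta}\rho\|_{L^{1}}$.

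One technical imprecision should be corrected. In part (I) you dispose of the third summand $\theta_{2}\Psi_{m,\alpha}\bigl((1-\theta_{1})J^{m}f\bigr)$ by ``Lemma \ref{lem1}''; but Lemma \ref{lem1} requires the object on which $\Psi_{a}$ acts to belong to $L^{2}$, whereas here $(1-\theta_{1})J^{m}f$ only lies in $H^{-m}$ (since $f\in L^{2}$ and $m\geq 1$), so the Young-convolution bound in Lemma \ref{lem1} does not apply directly. What is needed here --- and in the analogous term in part (II) --- is the kernel-plus-integration-by-parts argument of Lemma \ref{zk19}, in which one inserts $J^{2m}J^{-2m}$, integrates by parts to put $J^{-2m}f\in L^{2}$ inside, and uses the decay estimate \eqref{k2} of Theorem \ref{rea} on the kernel over the region $|x-z|\geq\delta$. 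This is exactly how the paper treats the corresponding term $III$ in the proof of Lemma \ref{A}(i). Your citation of Lemma \ref{lem1} is however correct in part (III), where the relevant object $(1-\theta_{1})f$ is genuinely $L^{2}$. In part (IV) you implicitly use a version of Lemma \ref{zk37} (the three-lines interpolation) with general cutoffs rather than the half-space weights in which the paper states it; this extension is straightforward since the proof of Lemma \ref{zk37} only uses the support separation and Lemma \ref{zk19}, but it is worth flagging that you are using the argument rather than the statement as written.
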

\begin{proof}
	The proof  follows by using an argument similar to the one used in the proof of lemma \ref{A}, so that for the sake of brevity we  omit the details.
\end{proof}

\section{Kato's smoothing effect}

In the following section we present he Kato's smoothing effect version satisfied by the solutions of the ZK equation in the $n-$ dimensional setting, $n\geq 2.$  By using the Kato's approach  for the KdV model \cite{KATO1}, we are able to prove that the solutions associated to the ZK equation   gain one local spatial derivative  for each direction. 
\begin{lem}\label{se}
	Let  $\psi$ be  a smooth function   such that $\psi,\psi'\geq 0.$  Let  $u$ be a function such that $u\in C\left([0,T]: H^{\infty}(\mathbb{R}^{n})\right)$ is a  solution of the IVP \eqref{zk4}. Assume also that $s\geq 0,\,r>n/2$ and $\sigma=(\sigma_{1},\sigma_{2},\dots,\sigma_{n})\in \mathbb{R}^{n},$  with 
	\begin{equation}\label{zk27}
	\sigma_{1}>0\qquad\mbox{and}\qquad \sqrt{3}\sigma_{1}>\sqrt{\sigma_{2}^{2}+\sigma_{3}^{2}+\cdots+\sigma_{n}^{2}}.
	\end{equation}
	 Then,
	\begin{equation}\label{zk26}
	\begin{split}
	&\sum_{m=1}^{n}\int_{0}^{T}\int_{\mathbb{R}^{n}} \left(\partial_{x_{m}}J^{s}u(x,t)\right)^{2}\,\psi'(\sigma\cdot x)\,\mathrm{d}x\,\mathrm{d}t\\
	&\lesssim_{n,\sigma}\left(1+T+\left\|\nabla u\right\|_{L^{1}_{T}L^{\infty}_{x}}+T\|u\|_{L^{\infty}_{T}H^{r}_{x}}\right) \left\|u\right\|_{L^{\infty}_{T}H^{s}_{x}}^{2}.
	\end{split}
	\end{equation}
	\end{lem}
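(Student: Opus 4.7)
The natural strategy is a weighted energy estimate in the spirit of Kato's original argument for KdV. I apply $J^{s}$ to the equation in \eqref{zk4} and, writing $v=J^{s}u$, obtain
\begin{equation*}
\partial_{t}v+\partial_{x_{1}}\Delta v+J^{s}(u\partial_{x_{1}}u)=0.
\end{equation*}
I multiply by $v\,\psi(\sigma\cdot x)$, integrate in $x$, and then integrate in $t\in[0,T]$. The time-derivative term yields the boundary contribution $\tfrac{1}{2}\int v^{2}\psi\,\mathrm{d}x\big|_{0}^{T}$, which is bounded by $\|\psi\|_{L^{\infty}}\|u\|_{L^{\infty}_{T}H^{s}_{x}}^{2}$. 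The dispersive term is where the smoothing is generated.

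The heart of the proof is the computation of $\int v\,\psi(\sigma\cdot x)\,\partial_{x_{1}}\Delta v\,\mathrm{d}x$ via repeated integration by parts. For the pure $x_{1}$ derivative the standard KdV identity gives
\begin{equation*}
\int v\,\psi(\sigma\cdot x)\,\partial_{x_{1}}^{3}v\,\mathrm{d}x=\frac{3\sigma_{1}}{2}\int\psi'(\partial_{x_{1}}v)^{2}\,\mathrm{d}x-\frac{\sigma_{1}^{3}}{2}\int\psi''' v^{2}\,\mathrm{d}x,
\end{equation*}
while for $m\neq 1$ an analogous calculation produces
\begin{equation*}
\int v\,\psi(\sigma\cdot x)\,\partial_{x_{1}}\partial_{x_{m}}^{2}v\,\mathrm{d}x=\frac{\sigma_{1}}{2}\int\psi'(\partial_{x_{m}}v)^{2}+\sigma_{m}\int\psi'(\partial_{x_{m}}v)(\partial_{x_{1}}v)-\frac{\sigma_{1}\sigma_{m}^{2}}{2}\int\psi''' v^{2}.
\end{equation*}
Summing over $m$, the gradient terms assemble into a quadratic form $\int\psi'\,\langle M\nabla v,\nabla v\rangle\,\mathrm{d}x$, where $M$ is the symmetric $n\times n$ matrix with $M_{11}=3\sigma_{1}$, $M_{mm}=\sigma_{1}$ for $m\geq 2$, $M_{1m}=M_{m1}=\sigma_{m}$, and zero otherwise. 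A Schur-complement computation shows that $M$ is positive definite exactly when $3\sigma_{1}^{2}>\sigma_{2}^{2}+\cdots+\sigma_{n}^{2}$, which is precisely hypothesis \eqref{zk27}. Hence there exists $\lambda=\lambda(\sigma)>0$ with $\langle M\nabla v,\nabla v\rangle\geq\lambda|\nabla v|^{2}$, producing the desired $\sum_{m}\int\psi'(\partial_{x_{m}}v)^{2}$ term. The residual $-\tfrac{\sigma_{1}|\sigma|^{2}}{2}\int\psi''' v^{2}$ is absorbed into the RHS using $\|\psi'''\|_{L^{\infty}}\int_{0}^{T}\|v\|_{L^{2}}^{2}\,\mathrm{d}t\leq T\,\|\psi'''\|_{L^{\infty}}\|u\|_{L^{\infty}_{T}H^{s}_{x}}^{2}$.

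For the nonlinearity I split
\begin{equation*}
\int v\,\psi\,J^{s}(u\partial_{x_{1}}u)\,\mathrm{d}x=\int v\,\psi\,[J^{s};u]\partial_{x_{1}}u\,\mathrm{d}x+\int v\,\psi\,u\,\partial_{x_{1}}v\,\mathrm{d}x.
\end{equation*}
The second term is handled by symmetrization: writing $u\,v\,\partial_{x_{1}}v=\tfrac{1}{2}u\,\partial_{x_{1}}(v^{2})$ and integrating by parts in $x_{1}$ produces $-\tfrac{\sigma_{1}}{2}\int\psi' u\,v^{2}-\tfrac{1}{2}\int\psi\,(\partial_{x_{1}}u)v^{2}$, which is controlled by $(\|\psi'\|_{L^{\infty}}+\|\psi\|_{L^{\infty}})(\|u\|_{L^{\infty}}+\|\partial_{x_{1}}u\|_{L^{\infty}})\|v\|_{L^{2}}^{2}$. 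After time integration this contributes $(T\|u\|_{L^{\infty}_{T}H^{r}_{x}}+\|\nabla u\|_{L^{1}_{T}L^{\infty}_{x}})\|u\|_{L^{\infty}_{T}H^{s}_{x}}^{2}$ by Sobolev embedding since $r>n/2$. For the commutator I apply the Kato--Ponce estimate \eqref{zk2}: $\|[J^{s};u]\partial_{x_{1}}u\|_{L^{2}}\lesssim\|\nabla u\|_{L^{\infty}}\|J^{s-1}\partial_{x_{1}}u\|_{L^{2}}+\|J^{s}u\|_{L^{2}}\|\partial_{x_{1}}u\|_{L^{\infty}}\lesssim\|\nabla u\|_{L^{\infty}}\|J^{s}u\|_{L^{2}}$, and then Cauchy--Schwarz together with time integration yields a contribution of order $\|\psi\|_{L^{\infty}}\|\nabla u\|_{L^{1}_{T}L^{\infty}_{x}}\|u\|_{L^{\infty}_{T}H^{s}_{x}}^{2}$.

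Collecting all estimates and absorbing the (positive) gradient term to the LHS gives \eqref{zk26}, with the implicit constant depending on $n$, $\sigma$ (through $\lambda$) and the $L^{\infty}$ norms of $\psi$, $\psi'$ and $\psi'''$. The main obstacle is the algebraic step: recognizing that the mixed term $\sigma_{m}\int\psi'(\partial_{x_{m}}v)(\partial_{x_{1}}v)$ is exactly balanced by the diagonal contributions under \eqref{zk27}; without this sign structure one cannot convert the dispersive term into a nonnegative gradient form. A secondary delicate point is the Kato--Ponce commutator, which is sharp enough to keep the estimate at the natural $H^{s}$ scale rather than losing a derivative.
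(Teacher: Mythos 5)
Your proposal is correct and follows the same weighted-energy route as the paper: multiply $J^{s}$ of the equation by $J^{s}u\,\psi(\sigma\cdot x)$, integrate by parts in the dispersive term to produce the weighted gradient quadratic form, absorb the $\psi'''$ remainder, and treat the nonlinearity by the commutator split together with Kato--Ponce and a symmetrization of $u\,v\,\partial_{x_{1}}v$.

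The one place you diverge is purely in presentation: where the paper asserts (via Cauchy--Schwarz on the cross terms, then ``there exists $\lambda(\sigma)>0$'') that the quadratic form comparable to $\sum_{m}\mu_{m}^{2}$, you write the form explicitly as $\langle M\nabla v,\nabla v\rangle$ with $M_{11}=3\sigma_{1}$, $M_{mm}=\sigma_{1}$, $M_{1m}=M_{m1}=\sigma_{m}$, and verify its positive definiteness by the Schur complement $3\sigma_{1}-\sigma_{1}^{-1}\sum_{m\geq2}\sigma_{m}^{2}>0$, which is exactly hypothesis \eqref{zk27}. This is a cleaner and fully verifiable version of the paper's lower bound \eqref{zk23}, but it is not a different method --- the algebraic content is identical.
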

	\begin{proof}		 
		For $\sigma$ as in \eqref{zk27} we define the function $$\psi_{\sigma}(x):=\psi(\sigma\cdot x),\quad x\in \mathbb{R}^{n}.$$
		
To obtain \eqref{zk26} we apply $J^{s}$ to the equation in \eqref{zk4} followed by a multiplication by  $J^{s}u\psi_{\sigma},$ so that after integrating  in the spatial variable yield
		\begin{equation}\label{zk25}
		\begin{split}
		&\frac{\mathrm{d}}{\mathrm{d}t}\int_{\mathbb{R}^{n}}\left(J^{s}u(x,t)\right)^{2} \psi_{\sigma}(x)\,\mathrm{d}x +\sigma_{1}\sum_{m=1}^{n}\int_{\mathbb{R}^{n}}\left(\partial_{x_{m}}J^{s}u(x,t)\right)^{2}\,\psi'(\sigma\cdot x)\,\mathrm{d}x\\
		&+2\sum_{m=1}^{n}\sigma_{m}\int_{\mathbb{R}^{n}}\left(\left(\partial_{x_{m}}J^{s}u\right)\left(\partial_{x_{1}}J^{s}u\right)\right)(x,t)\,\psi'(\sigma \cdot x)\,\mathrm{d}x\\
		&  -\sigma_{1}\sum_{m=1}^{n}\sigma_{m}^{2}\int_{\mathbb{R}^{n}}\left(J^{s}u(x,t)\right)^{2} \psi'''(\sigma \cdot x)\,\mathrm{d}x \\
		&  + \int_{\mathbb{R}^{n}}\left(J^{s}\left(u\partial_{x_{1}}u\right)J^{s}u\right)(x,t)\,\psi_{\sigma}(x)\,\mathrm{d}x=0.
		\end{split}
		\end{equation}
		At this point  we use  the approach used in   \cite{LPZK}, to obtain the smoothing effect. More precisely, 		for  $m$  a positive integer we define   
		\begin{equation*}
		\mu_{m}^{2}:=\int_{\mathbb{R}^{n}} \left(\partial_{x_{m}}J^{s}u\right)^{2}\,\psi'(\sigma\cdot x)\,\mathrm{d}x.
		\end{equation*}
	We claim that 
		\begin{equation}\label{zk22}
		\sigma_{1}\sum_{m=1}^{n}\mu_{m}^{2}+2\sum_{m=1}^{n}\sigma_{m}\int_{\mathbb{R}^{n}}\left(\left(\partial_{x_{m}}J^{s}u\right)\left(\partial_{x_{1}}J^{s}u\right)\right)(x,t)\psi'(\sigma\cdot x
		)\,\mathrm{d}x\equiv_{\sigma} \sum_{m=1}^{n} \mu_{m}^{2}.
		\end{equation}
		For our proposes it is necessary to decouple the term  involving the interactions of the derivatives in \eqref{zk22}, this can be achieved by using Cauchy-Schwarz inequality. More precisely, 
		  due to   $\psi'\geq 0,$ it is clear that   
		\begin{equation}\label{interaction}
	\left|	\int_{\mathbb{R}^{n}}\left(\left(\partial_{x_{m}}J^{s}u\right)\left(\partial_{x_{1}}J^{s}u\right)\right)(x,t)\psi'(\sigma\cdot x
		)\,\mathrm{d}x\right|\leq\mu_{m}\mu_{1}, \quad\mbox{for} \quad m\in \{1,2\dots,n\}.
		\end{equation}
		So that, for  $\sigma\in \mathbb{R}^{n}$ with 
		\begin{equation*}
		\sigma_{1}>0\qquad\mbox{and}\qquad \sqrt{3}\sigma_{1}>\sqrt{\sigma_{2}^{2}+\sigma_{3}^{2}+\cdots+\sigma_{n}^{2}},
		\end{equation*}
		there exist $\lambda=\lambda(\sigma)>0,$ such that 
		\begin{equation}\label{zk23}
		\sigma_{1}\sum_{m=1}^{n}\mu_{m}^{2}+2\sum_{m=1}^{n}\sigma_{m}\int_{\mathbb{R}^{n}}\left(\partial_{x_{m}}J^{s}u\right)\left(\partial_{x_{1}}J^{s}u\right)\psi'(\sigma\cdot x)
		\,\mathrm{d}x\geq \sum_{m=1}^{n} \lambda(\sigma)\mu_{m}^{2}.
		\end{equation}
		The opposite  inequality follows as  an straightforward  application of    Young's inequality in \eqref{interaction},  that is
		\begin{equation}\label{interaction2}
		\sigma_{1}\sum_{m=1}^{n}\mu_{m}^{2}+2\sum_{m=1}^{n}\sigma_{m}\int_{\mathbb{R}^{n}}\left(\partial_{x_{m}}J^{s}u\right)\left(\partial_{x_{1}}J^{s}u\right)\psi'(\sigma\cdot x)\,\mathrm{d}x\lesssim_{\sigma} \sum_{m=1}^{n} \mu_{m}^{2}.
		\end{equation}
	Finally, we  gather the  inequalities \eqref{interaction} and \eqref{interaction2} to obtain 
		\begin{equation*}
		\sigma_{1}\sum_{m=1}^{n}\mu_{m}^{2}+2\sum_{m=1}^{n}\sigma_{m}\int_{\mathbb{R}^{n}}\left(\partial_{x_{m}}J^{s}u\right)\left(\partial_{x_{1}}J^{s}u\right)\psi'(\sigma\cdot x
		)\,\mathrm{d}x\equiv_{\sigma} \sum_{m=1}^{n} \mu_{m}^{2}.
		\end{equation*}
		Instead, the nonlinear part is handled by means of integration by parts, H\"{o}lder's inequality and   \eqref{zk2}, whence we get 
		\begin{equation}\label{zk24}
		\begin{split}
		\int_{\mathbb{R}^{n}}J^{s}\left(u\partial_{x_{1}}u\right)J^{s}u\,\psi_{\sigma}\,\mathrm{d}x
		&= \int_{\mathbb{R}^{n}} \psi_{\sigma}J^{s}u\left[J^{s}; u\right]\partial_{x_{1}}u\,\mathrm{d}x-\frac{1}{2}\int_{\mathbb{R}^{n}}\partial_{x_{1}}u\left(J^{s}u\right)^{2}\psi_{\sigma}\,\mathrm{d}x \\
		&\quad -\frac{\sigma_{1}}{2}\int_{\mathbb{R}^{n}}u \left(J^{s}u\right)^{2}\psi'(\sigma\cdot x)\,\mathrm{d}x\\
		&\lesssim_{\sigma}\left(\left\|\nabla u\right\|_{L^{\infty}}+\|u\|_{L^{\infty}}\right)\|u\|_{H^{s}}^{2}.
		\end{split}
		\end{equation} 
		Therefore, we deduce after  gathering \eqref{zk25}, \eqref{zk23}, \eqref{zk24} and integrating in time   that
		\begin{equation*}
		\begin{split}
		&\sum_{m=1}^{n}\int_{0}^{T}\int_{\mathbb{R}^{n}}\left|\partial_{x_{m}}J^{s}u(x,t)\right|^{2}\,\psi'(\sigma\cdot x)\,\mathrm{d}x\,\mathrm{d}t\\
		&\lesssim_{n,\sigma} \left(1+T+\left\|\nabla u\right\|_{L^{1}_{T}L^{\infty}} +T\|u\|_{L^{\infty}_{T}L^{\infty}}\right) \left\|u\right\|_{L^{\infty}_{T}H^{s}}^{2}.
		\end{split}
		\end{equation*} 
		From the last inequality above  it is straightforward  to obtain \eqref{zk25} by means of  Sobolev embedding.
	\end{proof}
	  \section{Proof of Theorem \ref{zk9}}
In this section we provide  a proof of theorem \ref{zk9}, nevertheless before provide the proof scheme we will describe the weighted functions to be used in our analysis.

More precisely, for $\epsilon>0$ and $\tau\geq 5\epsilon$ we define the following  families  of functions
\begin{equation*}
\chi_{\epsilon, \tau},\widetilde{\phi_{\epsilon,\tau}}, \phi_{\epsilon,\tau},\psi_{\epsilon}\in C^{\infty}(\mathbb{R}),
\end{equation*} 
 satisfying the   conditions indicated below:
\begin{itemize}
	\item[(i)] $
	\chi_{\epsilon, \tau}(x)=
	\begin{cases} 
	1 & x\geq \tau \\
	0 & x\leq \epsilon 
	\end{cases},
$
\item[(ii)] $\supp(\chi_{\epsilon, \tau}')\subset[\epsilon,\tau],$ 
\item[(iii)] $\chi_{\epsilon, \tau}'(x)\geq 0,$
\item[(iv)] $
\chi_{\epsilon, \tau}'(x)\geq\frac{1}{10(\tau-\epsilon)}\mathbb{1}_{[2\epsilon,\tau-2\epsilon]}(x),$
\item[(v)] $\supp\left(\widetilde{\phi_{\epsilon,\tau}}\right),\supp(\phi_{\epsilon,\tau})\subset \left[\frac{\epsilon}{4},\tau\right],$
\item[(vi)] $\phi_{\epsilon,\tau}(x)=\widetilde{\phi_{\epsilon,\tau}}(x)=1,\,\mbox{if}\quad  x\in \left[\frac{\epsilon}{2},\epsilon\right],$
\item[(vii)] $\supp(\psi_{\epsilon})\subset \left(-\infty,\frac{\epsilon}{4}\right].$
\item[(viii)] For all $x\in\mathbb{R}$ the following quadratic partition of the unity holds
$$
\chi_{\epsilon, \tau}^{2}(x)+\widetilde{\phi_{\epsilon,\tau}}^{2}(x)+\psi_{\epsilon}(x)=1,$$
\item[(ix)] also for all $x\in\mathbb{R}$
 $$
\chi_{\epsilon, \tau}(x)+\phi_{\epsilon,\tau}(x)+\psi_{\epsilon}(x)=1,\quad x\in\mathbb{R}.
$$
\end{itemize}
For a more detailed construction of these families of weighted  functions see  \cite{ILP1}.
\begin{proof}[Proof of theorem \ref{zk9}]
Formally we apply the operator $J^{s}$ to the equation  in \eqref{zk4}, followed by a multiplication by $J^{s}u\,\chi_{\epsilon,\tau,\sigma}^{2}(x,t)$   to obtain 
\begin{equation*}
\left(J^{s}\partial_{t}u J^{s}u+ J^{s}\partial_{x_{1}}\Delta uJ^{s}u+J^{s}\left(u\partial_{x_{1}}u\right)J^{s}u\right)\chi_{\epsilon,\tau,\sigma}^{2}(x,t)=0,
\end{equation*}
where 
\begin{equation*}
\chi_{\epsilon, \tau,\sigma }(x,t):=\chi_{\epsilon, \tau}\left(\sigma\cdot x+\nu t-\beta\right),\quad x\in \mathbb{R}^{n},\,t\geq 0,
\end{equation*}
similarly are  defined the functions  $\phi_{\epsilon, \tau,\sigma }(x,t),\widetilde{\phi_{\epsilon, \tau,\sigma}}(x,t)$ and $\psi_{\epsilon,\sigma}(x,t).$

Without loss of generality we will assume  that  $\beta=0.$

So that,  after integrating by parts  we obtain the following identity:
\begin{equation}\label{zkwee}
\begin{split}
&\frac{1}{2}\frac{\mathrm{d}}{\mathrm{d}t}\int_{\mathbb{R}^{n}}\left(J^{s}u(x,t)\right)^{2} \chi^{2}_{\epsilon,\tau,\sigma}(x,t)\,\mathrm{d}x\underbrace{-\nu\int_{\mathbb{R}^{n}} \left(J^{s}u(x,t)\right)^{2} \left(\chi_{\epsilon,\tau}\,\chi_{\epsilon, \tau}'\right)(\sigma\cdot x+\nu t)\,\mathrm{d}x}_{A_{1}(t)}\\
& +\sigma_{1}\sum_{m=1}^{n}\underbrace{\int_{\mathbb{R}^{n}}\left(\partial_{x_{m}}J^{s}u(x,t)\right)^{2}\left(\chi_{\epsilon,\tau}\,\chi_{\epsilon,\tau}'\right)(\sigma\cdot x+\nu t)\,\mathrm{d}x}_{A_{2,m}(t)}\\
&+2\sum_{m=1}^{n}\sigma_{m}\underbrace{\int_{\mathbb{R}^{n}}\left(\partial_{x_{m}}J^{s}u(x,t)\right)\left(\partial_{x_{1}}J^{s}u(x,t)\right)\left(\chi_{\epsilon,\tau}\,\chi_{\epsilon,\tau}'\right)(\sigma\cdot x+\nu t)\,\mathrm{d}x}_{A_{3,m}(t)}\\
& \underbrace{-3\sigma_{1}|\sigma|^{2}\int_{\mathbb{R}^{n}}\left(J^{s}u(x,t)\right)^{2} \left(\chi_{\epsilon,\tau}'\,\chi_{\epsilon,\tau}''\right)(\sigma\cdot x+\nu t)\,\mathrm{d}x}_{A_{4}(t)}\\
&    \underbrace{-\sigma_{1}|\sigma|^{2}\int_{\mathbb{R}^{n}}\left(J^{s}u(x,t)\right)^{2}\left(\chi_{\epsilon,\tau}\,\chi_{\epsilon,\tau}'''\right)(\sigma\cdot x+\nu t)\,\mathrm{d}x}_{A_{5}(t)}\\
& +\underbrace{\int_{\mathbb{R}^{n}}J^{s}\left(u(x,t)\partial_{x_{1}}u(x,t)\right)J^{s}u(x,t)\,\chi_{\epsilon,\tau,\sigma }^{2}(x,t)\,\mathrm{d}x}_{A_{6}(t)}=0.
\end{split}
\end{equation}

\begin{flushleft}
{\sc Case : $s\in \left(s_{n}, s_{n}+1\right)$.}
\end{flushleft}
For $\epsilon>0$ and $\tau\geq 5\epsilon,$  there exist $\delta>0$ and $c>0$ such that 
 \begin{equation*}
 \chi_{\epsilon,\tau,\sigma}(x, t)\,\chi_{\epsilon, \tau}'(\sigma\cdot x+\nu t  )\leq c\mathbb{1}_{\mathcal{Q}_{\{\sigma, -\delta, \delta\}}}(x) \quad x\in \mathbb{R}^{n},\, t\in [0,T].
 \end{equation*}
 Hence, 
 \begin{equation}\label{zk36}
\begin{split}
\int_{0}^{T}|A_{1}(t)|\,\mathrm{d}t&\leq|\nu|\int_{0}^{T}\int_{\mathbb{R}^{n}} \left(J^{s}u(x,t)\right)^{2}\left( \chi_{\epsilon,\tau
	,}\,\chi_{\epsilon, \tau}'\right)(\sigma\cdot x+\nu t)\,\mathrm{d}x\,\mathrm{d}t\\
&\leq c|\nu| \int_{0}^{T}\int_{\mathcal{Q}_{\{\sigma,-\delta, \delta\}}} \left(J^{s}u(x,t)\right)^{2} \,\mathrm{d}x\,\mathrm{d}t.\\
\end{split}
\end{equation}
Nevertheless, by  constructing  $\psi$  properly in  lemma \ref{se} it can be deduced that 
there exist $\delta_{1}>0,$ such that 
\begin{equation}\label{zk35}
\int_{0}^{T}\int_{\mathcal{Q}_{\left\{\sigma, -\delta_{1}, \delta_{1}\right\}}}\!\!\!\!\! \left(J^{r}u(x,t)\right)^{2}\,\mathrm{d}x\,\mathrm{d}t\leq c,\quad \mbox{for any}\quad r\in \left[0, \beta\right],
\end{equation}
where $\beta=s_{n}+1.$  

Thus,  as a particular case we get that  $A_{1}\in L^{1}_{T}.$

By using a similar argument  we deduce  that
\begin{equation*}
\begin{split}
\int_{0}^{T}|A_{4}(t)|\,\mathrm{d}t&\leq 3\sigma_{1}|\sigma|^{2}\int_{0}^{T}\int_{\mathbb{R}^{n}}\left(J^{s}u\right)^{2} \left(\chi_{\epsilon,\tau
}'\,\chi_{\epsilon,\tau}''\right)(\sigma\cdot x+\nu t)\,\mathrm{d}x\,\mathrm{d}t\\
&\leq c_{\sigma,\epsilon},
\end{split}
\end{equation*}
and 
\begin{equation*}
\begin{split}
\int_{0}^{T}|A_{5}(t)|\,\mathrm{d}t &\leq 3\sigma_{1}|\sigma|^{2}\int_{0}^{T}\int_{\mathbb{R}^{n}}\left(J^{s}u(x,t)\right)^{2} \left(\chi_{\epsilon,\tau}'\,|\chi_{\epsilon,\tau}''|\right)(\sigma\cdot x+\nu t)\,\mathrm{d}x\,\mathrm{d}t\\
&\leq c_{\sigma,\epsilon}.
\end{split}
\end{equation*}

Next, for $m$  a positive  integer with $1\leq m\leq n,$  we set
\begin{equation}\label{interec}
\mu_{m}^{2}(t):=\int_{\mathbb{R}^{n}} \left(\partial_{x_{m}}J^{s}u(x,t)\right)^{2}\left(\chi_{\epsilon,\tau}\,\chi_{\epsilon,\tau}'\right)(\sigma\cdot x+\nu t)\,\mathrm{d}x, \quad  t\in (0,T).
\end{equation}
By using an argument similar to that one used in the proof of lemma \ref{se},   we obtain that  there exist a positive number  $\lambda=\lambda(\sigma),$  such that    
\begin{equation}\label{interaction3}
\begin{split}
       &\sum_{m=1}^{n}\lambda \mu_{m}^{2}\\
       &\lesssim 2\sum_{m=1}^{n}\sigma_{m}\int_{\mathbb{R}^{n}}\left(\partial_{x_{m}}J^{s}u(x,t)\right)\left(\partial_{x_{1}}J^{s}u(x,t)\right)\left(\chi_{\epsilon,\tau}\,\chi_{\epsilon,\tau}'\right)(\sigma\cdot x +\nu t)\,\mathrm{d}x\\
       &\quad  +\sigma_{1}\sum_{m=1}^{n}\mu_{m}^{2}.
\end{split}
\end{equation}

%
 Finally, we show how to control  the nonlinear part. First, notice that by  Cauchy-Schwarz inequality  it is enough to estimate $ \chi_{\epsilon, \tau,\sigma}J^{s}(u\partial_{x_{1}}u)$ in the $L^{2}-$norm. In this sense,  the following   relationships are very useful
 \begin{equation*}
\chi_{\epsilon, \tau,\sigma}(x,t)+\phi_{\epsilon, \tau,\sigma}(x,t)+\psi_{\epsilon,\sigma}(x,t)=1,\quad x\in \mathbb{R}^{n},t\in\mathbb{R}
\end{equation*}
and 
\begin{equation*}
\chi_{\epsilon, \tau,\sigma}^{2}(x,t)+\widetilde{\phi_{\epsilon, \tau,\sigma}}^{2}(x,t)+\psi_{\epsilon,\sigma}(x,t)=1,\quad x\in \mathbb{R}^{n},t \in \mathbb{R}.
\end{equation*}
So that,  in order to integrate  these decomposition into the non-linear part we write
\begin{equation}\label{nlp1}
\begin{split}
&\chi_{\epsilon, \tau,\sigma} J^{s}(u\partial_{x_{1}}u)\\
&= -\frac{1}{2}\left[J^{s}; \chi_{\epsilon,\tau,\sigma}\right]\partial_{x_{1}}\left((u\chi_{\epsilon,\tau,\sigma})^{2}+\left(u\widetilde{\phi_{\epsilon,\tau,\sigma}}\right)^{2}+u^{2}\psi_{\epsilon,\sigma}\right)\\
&\quad +\left[J^{s};u\chi_{\epsilon,\tau,\sigma}\right]\partial_{x_{1}}\left(u\chi_{\epsilon,\tau,\sigma}\right)+ \left[J^{s};u\chi_{\epsilon,\tau,\sigma}\right]\partial_{x_{1}}\left(u\phi_{\epsilon,\tau,\sigma}\right)\\ 
&\quad +\left[J^{s};u\chi_{\epsilon,\tau,\sigma}\right]\partial_{x_{1}}\left(u\psi_{\epsilon,\sigma}\right)+ \chi_{\epsilon,\tau,\sigma }uJ^{s}\partial_{x_{1}}u\\
&=B_{6,1}+B_{6,2}+B_{6,3}+B_{6,4}+B_{6,5}+B_{6,6}+B_{6,7}.
\end{split}
\end{equation}

First, we denote by $\Psi_{\zeta}$ the operator
${\displaystyle 
\Psi_{\zeta}:=\left[J^{s}; \chi_{\epsilon,\tau,\sigma}\right]},$
 that according to proposition \ref{prop1}
\begin{equation}\label{Commutator1}
\begin{split}
\zeta(x,\xi)\sim \sum_{\alpha} \frac{(2\pi i)^{-|\alpha|}}{\alpha!}\left(\partial^{\alpha}_{\xi}\left(\langle \xi\rangle^{s}\right)\partial_{x}^{\alpha}\left(\chi_{\epsilon,\tau,\sigma}(x,t)\right)\right)\quad \mbox{for any}\quad t\geq 0.
\end{split}
\end{equation}
Nevertheless,  we have  previously  obtained similar decomposition for this operator in the proof of lemma \ref{A}, so that, for the sake of brevity we will omit the details.

Therefore, we claim that there exist a pseudo-differential operator $\Psi_{\kappa_{s-m-1}}\in\mathrm{OP}\mathbb{S}^{s-m-1},$  such that for any  $f\in\mathcal{S}$  the following identity holds
\begin{equation}\label{e10}
\begin{split}
\left[J^{s}; \chi_{\epsilon, \tau,\sigma}\right]f(x)&=\sum_{j=1}^{m}\sum_{|\alpha|=j}\sum_{\beta\leq\alpha} c_{\alpha,\beta}\sigma^{\alpha} \,\partial_{x}^{\alpha}\chi_{\epsilon, \tau,\sigma}(x, t)\Psi_{\eta_{\alpha,\beta}}J^{s-|\alpha|}f(x)\\
&\quad + \Psi_{\kappa_{s-m-1}}f(x), 
\end{split}
\end{equation}
where $m=\ceil[\Big]{\frac{n}{2}}+1,$ and  $\Psi_{\eta_{\alpha,\beta}}\in \mathrm{OP}\mathbb{S}^{|\beta|-|\alpha|}\subset\mathrm{OP}\mathbb{S}^{0}$ is defined  as in  \eqref{e11}. 

Hence,   combining  theorem \ref{zkth1}, interpolation and  inequality \eqref{zk8},  we obtain 
\begin{equation}\label{e12}
\begin{split}
\|B_{6,1}\|_{L^{2}}&=\left\|\left[J^{s};\chi_{\epsilon, \tau,\sigma}\right]\partial_{x_{1}}\left(\left(u\chi_{\epsilon,\tau,\sigma}\right)^{2}\right)\right\|_{L^{2}}\\
&\leq \sum_{j=1}^{m}\sum_{|\alpha|=j}\sum_{\beta\leq\alpha} c_{\alpha,\beta}\sigma^{\alpha} \,\left\|\partial_{x}^{\alpha}\chi_{\epsilon, \tau,\sigma}(\cdot,t)\right\|_{L^{\infty}_{x}}\left\|\Psi_{\eta_{\alpha,\beta}}J^{s-|\alpha|}\left(\partial_{x_{1}}\left(\left(u\chi_{\epsilon,\tau,\sigma}\right)^{2}\right)\right)\right\|_{L^{2}}\\
&\quad +\left\| \Psi_{\kappa_{s-m-1}}\left(\partial_{x_{1}}\left(\left(u\chi_{\epsilon,\tau,\sigma}\right)^{2}\right)\right)\right\|_{L^{2}}\\
&\lesssim_{\epsilon,\tau,\sigma,m}\left\|u\right\|_{L^{\infty}}\left\|J^{s}\left(u\chi_{\epsilon,\tau,\sigma}\right)\right\|_{L^{2}}.
\end{split}
\end{equation}
Similarly, 
\begin{equation}\label{e13}
\begin{split}
\|B_{6,2}\|_{L^{2}}=&\left\|\left[J^{s}; \chi_{\epsilon,\tau,\sigma }\right]\partial_{x_{1}}\left(\left(u\widetilde{\phi_{\epsilon,\tau,\sigma}}\right)^{2}\right)\right\|_{L^{2}}\\
&\lesssim_{\epsilon,\tau,\sigma,m}\left\|u\right\|_{L^{\infty}}\left\|J^{s}\left(u\widetilde{\phi_{\epsilon,\tau,\sigma}}\right)\right\|_{L^{2}}.
\end{split}
\end{equation}
For  the term $B_{6,3}$ it is not   required the use of the previous machinery to obtain some upper  bounds. Instead, we  notice  that  for any $t\geq 0,$
\begin{equation}\label{e18}
\dist\left(\supp \left(\chi_{\epsilon, \tau,\sigma}(\cdot,t)\right), \supp \left(\psi_{\epsilon,\sigma}(\cdot,t)\right)\right)\geq \frac{\epsilon}{2|\sigma|}.
\end{equation}
So, we fall in the hypothesis of lemma \ref{lem1}, therefore it is clear that 
\begin{equation}\label{e19} 
\|B_{6,3}\|_{L^{2}}=\left\|\chi_{\epsilon, \tau,\sigma}J^{s}\partial_{x_{1}}\left(u^{2}\psi_{\epsilon,\sigma}\right)\right\|_{L^{2}}\lesssim_{\epsilon,\sigma} \|u_{0}\|_{L^{2}}\|u\|_{L^{\infty}}.
\end{equation}
Also, from the remark indicated in \eqref{e18} and lemma \ref{lem1} it also follows that
\begin{equation}\label{e20}
\left\|B_{6,6}\right\|_{L^{2}}=\left\|u\chi_{\epsilon, \tau,\sigma}J^{s}\left(u\psi_{\epsilon,\sigma}\right)\right\|_{L^{2}_{x}}
\lesssim _{\epsilon,\sigma}\|u\|_{L^{\infty}}\|u_{0}\|_{L^{2}}.
\end{equation} 
Concerning to the  terms $B_{6,4}$ and $B_{6,5},$ the scenario is quite different,  and the argument used above  can be avoided by  using the Kato-Ponce commutator estimate \eqref{zk2}. More precisely,
\begin{equation}\label{e14}
\left\|\left[J^{s}; u\chi_{\epsilon, \tau,\sigma}\right]\partial_{x_{1}}\left(u\chi_{\epsilon,\tau,\sigma}\right)\right\|_{L^{2}}\lesssim \left\|J^{s}\left(u\chi_{\epsilon, \tau,\sigma}\right)\right\|_{L^{2}}\left\|\partial\left(u \chi_{\epsilon, \tau,\sigma }\right)\right\|_{L^{\infty}},
\end{equation}
and 
\begin{equation}\label{e15}
\begin{split}
\left\|\left[J^{s}; u\chi_{\epsilon, \tau,\sigma }\right]\partial_{x_{1}}\left(u\phi_{\epsilon, \tau,\sigma }\right)\right\|_{L^{2}}
&\lesssim \left\|J^{s}\left(u\phi_{\epsilon,\tau,\sigma }\right)\right\|_{L^{2}}\left\|\nabla\left(u\chi_{\epsilon, \tau,\sigma }\right)\right\|_{L^{\infty}}\\
&\quad  +\left\|J^{s}\left(u\chi_{\epsilon, \tau,\sigma }\right)\right\|_{L^{2}}\left\|\partial_{x_{1}}\left(u\phi_{\epsilon, \tau,\sigma }\right)\right\|_{L^{\infty}}.
\end{split}
\end{equation}
 Next, we bound the term $B_{6,7},$ for this  we notice that up to constants,  this term  is in essence  upper bounded by  $\|u(t)\|_{L^{\infty}}$ times  $A_{1}(t),$  more precisely,   
\begin{equation}
\begin{split}
|B_{6,7}(t)|&\lesssim_{\sigma} \|u(t)\|_{L^{\infty}}|A_{1}(t)|.
\end{split}
\end{equation}
Since it was proved previously that $A_{1}\in L^{1}_{T}$  is bounded,  for the sake of brevity we will omit the proof,  so that,  we  shall  only guarantee that $\|u(t)\|_{L^{\infty}}<\infty,$ for almost all $t\in [0,T].$ Nevertheless, a combination of the local theory and the Sobolev  embedding clearly imply that $\|u(t)\|_{L^{\infty}}<\infty.$

Instead, to handle the term  $B_{6,8},$  we only require to use Sobolev's embedding,  this in order to guarantee    that   $\nabla  u\in L^{1}\left( [0,T]:L^{\infty}(\mathbb{R}^{n})\right).$  More precisely, 

\begin{equation*}
\begin{split}
|B_{6,8}(t)| 
&\lesssim \left\|\nabla u(t)\right\|_{L^{\infty}}\int_{\mathbb{R}^{n}} \left(J^{s}u(x,t)\right)^{2} \chi_{\epsilon,\tau,\sigma }^{2}(x, t)\,\mathrm{d}x,
\end{split}
\end{equation*}
notice that the quantity to be estimated by Gronwall's inequality corresponds to the integral expression in the r.h.s  above.

Finally, we turn our attention to several terms that remains  to estimate in several inequalities above e.g.   the term $\left\|J^{s}\left(u\chi_{\epsilon, \tau,\sigma }\right)\right\|_{L^{2}} $ present in   \eqref{e12},  \eqref{e14}, \eqref{e15}. Nevertheless, after taking into consideration  the following  decomposition 
\begin{equation*}
\begin{split}
J^{s}\left(u\chi_{\epsilon, \tau,\sigma}\right)
&=\chi_{\epsilon, \tau,\sigma }J^{s}u+\left[J^{s}; \chi_{\epsilon, \tau,\sigma }\right]\left(u\chi_{\epsilon, \tau,\sigma }+u\phi_{\epsilon, \tau,\sigma }+u\psi_{\epsilon,\sigma}\right)\\
&=I +II+III+IV.
\end{split}
\end{equation*}
The  term $I$ in the r.h.s above  is the quantity to be estimated by Gronwall's inequality; instead the term $IV$ can be easily  handled by using lemma \ref{lem1}. More precisely, we get 
\begin{equation*}
\|IV\|_{L^{2}}=\left\|\chi_{\epsilon, \tau,\sigma }J^{s}\left(\psi_{\epsilon,\sigma}u\right)\right\|_{L^{2}}\lesssim_{\epsilon,\sigma} \|u_{0}\|_{L^{2}}.
\end{equation*}
Instead the terms, $\left\|J^{s}\left(u\phi_{\epsilon, \tau,\sigma }\right)\right\|_{L^{2}},\, \left\|J^{s}\left(u\widetilde{\phi_{\epsilon, \tau,\sigma }}\right)\right\|_{L^{2}}$   can be estimated by using lemma \ref{A}.

However, to bound the terms $II$ and $III$ we  require to  decompose the commutator in a    similar manner  to that one in \eqref{Commutator1}-\eqref{e10}.  So that, we get   
\begin{equation*}
\|II\|_{L^{2}}\lesssim_{\epsilon,\tau,\sigma,s} \left\|J^{s-1}(u\chi_{\epsilon, \tau,\sigma })\right\|_{L^{2}}
\end{equation*}
and
\begin{equation*}
\|III\|_{L^{2}}\lesssim_{\epsilon,\tau,\sigma,s} \left\|J^{s-1}(u\phi_{\epsilon, \tau,\sigma })\right\|_{L^{2}}.
\end{equation*}
Later, after  integrating in time, we get that 
\begin{equation*}
\|II\|_{L^{2}_{T}L^{2}}^{2}\lesssim_{\epsilon,\tau,\sigma,s} T\left\|J^{s-1}(u\chi_{\epsilon, \tau,\sigma })\right\|_{L^{2}_{T}L^{2}}^{2}<\infty,
\end{equation*}
the last inequality above follows as a direct application of  theorem \ref{lwp}.  

Similarly, 
\begin{equation*}
\|III\|_{L^{2}_{T}L^{2}}^{2}\lesssim_{\epsilon,\tau,\sigma,s} T\left\|J^{s-1}(u\phi_{\epsilon, \tau,\sigma })\right\|_{L^{2}_{T}L^{2}}^{2}<\infty.
\end{equation*}
Finally, this step conclude  by gathering the estimates above combined with Gronwall's inequality and integration in time  whence we obtain that: \\
for any $ \nu\geq 0, \epsilon>0, \, \tau \geq 5 \epsilon,$ 
\begin{equation}\label{p1}
\sup_{0\leq t\leq T}\left\|J^{s}u\right\|_{L^{2}\left(\mathcal{H}_{\{\sigma,\epsilon-\nu t\}}\right)}^{2}+\sum_{m=1}^{n}\lambda(\sigma)\left\|\partial_{x_{m}}J^{s}u\right\|_{L^{2}_{T}L^{2}\left(\mathcal{Q}_{\{\sigma,\epsilon-\nu t, \tau-\nu t \}}\right)}^{2}\leq c^{*}_{(s)},
\end{equation}
where   $c^{*}_{(s)}$ is a positive constant depending on the following quantities  \\${\displaystyle c^{*}_{(s)}=c^{*}_{(s)}\left(\epsilon;\tau;\sigma;\lambda;\nu;n;s;T;\left\|J^{s}u_{0}\right\|_{L^{2}\left(\mathcal{H}_{\{\sigma,\epsilon\}}\right)};\left\|u\right\|_{L^{\infty}_{T}H^{s}}\right)>0.}$


Notice that the second term in the left hand side above  provides a gain of one local derivative in all directions, being this  indicated  by the presence of   the operator $\nabla J^{s}$. However, our method of proof requires   a  modified version of the smoothing effect obtained in  \eqref{p1}. Roughly speaking,    we shall  indicate the  smoothing effect in terms of  the operator $J^{s+1}$ instead of $\nabla J^{s}.$ 

So that,  we will undertake this modification by means of a chains o claims that will imply the smoothing effect required.
\begin{flushleft}
{\sc Claim 1:}
	\end{flushleft}
If for any $\nu\geq 0,\, \epsilon_{1}>0$ and $\tau_{1}\geq 5\epsilon_{1}$
\begin{equation}\label{e22}
\sum_{m=1}^{n}\lambda(\sigma)\left\|\partial_{x_{m}}J^{s}u\right\|_{L^{2}_{T}L^{2}\left(\mathcal{Q}_{\{\sigma,\epsilon_{1}-\nu t, \tau_{1}-\nu t \}}\right)}^{2}\leq c^{*}_{(s)},
\end{equation}
for some positive constant $c^{*}_{(s)},$ then   there exist a  constant $c^{**}_{(s)}>0,$  such that 
for any $ \nu \geq 0,\epsilon>0,$ and $\tau\geq \epsilon,$
\begin{equation}\label{e24}
\left\|J^{s+1}u\right\|_{L^{2}_{T}L^{2}\left(\mathcal{Q}_{\{\sigma,\epsilon-\nu t, \tau-\nu t \}}\right)}\leq c^{**}_{(s)},
\end{equation}
where ${\displaystyle c^{**}_{(s)}=c^{**}_{(s)}\left(\epsilon;\tau; T;n; c^{*}_{(s)};\lambda(\sigma);\|u_{0}\|_{L^{2}}\right)}.$
\begin{proof}[{\sc Proof of Claim 1}]
We claim that there exist functions $\theta_{1},\theta_{2}\in C^{\infty}\left(\mathbb{R}^{n}\right),$ with bounded derivatives such that:
	\begin{equation*}
	\supp\left(\theta_{1}\right)\subseteq \mathcal{Q}_{\left\{\sigma,\frac{\epsilon}{2},\tau+\epsilon\right\}}\quad\mbox{with}\quad \theta_{1}\equiv 1 \quad \mbox{on}\quad \overline{\mathcal{Q}_{\left\{\sigma,\epsilon,\tau\right\}}},
	\end{equation*}
	and 
		\begin{equation*}
	\supp\left(\theta_{2}\right)\subseteq \mathcal{Q}_{\left\{\sigma,\frac{3\epsilon}{2},\tau-\epsilon\right\}}\quad\mbox{with}\quad \theta_{2}\equiv 1 \quad \mbox{on}\quad \overline{\mathcal{Q}_{\left\{\sigma,2\epsilon,\tau-2\epsilon\right\}}}.
	\end{equation*}
	Next, notice  that	for any $s>0,$ the following representation holds
	\begin{equation*}
	J^{s+1}f=J^{s-1}f+\sum_{m=1}^{n}c_{m}\Psi_{m}\partial_{x_{m}}J^{s}f.
	\end{equation*} 
Roughly speaking, up to constant the operator $ \Psi_{m}$ is nothing more than $\partial_{x_{m}}J^{-1}, $ for $m=1,2,\cdots,n.$ Besides,  $\Psi_{m}\in \mathrm{OP}\mathbb{S}^{0},$ so that, in virtue of theorem \ref{zkth1} it maps $L^{2}(\mathbb{R}^{n})$ into $L^{2}(\mathbb{R}^{n}).$

  Notice that notwithstanding  we recover in theory all the derivatives  in the channel of propagation, it is  not so useful for our proposes, due to   the perturbation by the  non-local pseudo-differential operator $\Psi_{m}.$ So that,   in order to reconcile all these issues, we introduce into our analysis the  functions $\theta_{1},\theta_{2},$ by means of the    following decomposition:
 \begin{equation*}
 \begin{split}
 \theta_{2}\Psi_{m}\partial_{x_{m}}J^{s}u&=\left[\Psi_{m}; \theta_{2}\right]\theta_{1}\partial_{x_{m}}J^{s}u+\theta_{2}\Psi_{m}\left(\left(1-\theta_{1}\right)\partial_{x_{m}}J^{s}f\right)+\Psi_{m}\left(\theta_{2}\partial_{x_{m}}J^{s}u\right).\\
 \end{split}
 \end{equation*}
 Nevertheless, in order to incorporate this decomposition  properly into the  argument, it is necessary to define the following functions
 \begin{equation*}
  \vartheta_{1}(x,t):=\theta_{1}\left(x+\frac{\nu t\sigma}{|\sigma|^{2}}\right)\quad \mbox{and}\quad  \vartheta_{2}(x,t):=\theta_{2}\left(x+\frac{\nu t\sigma}{|\sigma|^{2}}\right),\quad x\in \mathbb{R}^{n},t\in \mathbb{R}.
 \end{equation*}
 Therefore, combining lemma \ref{zk19}, the continuity of $\Psi_{m}$ and the properties of the weighted functions $\theta_{1},\theta_{2},$ we get 
 \begin{equation*}
 \begin{split}
&\left\|J^{s+1}u\right\|_{L^{2}\left(\mathcal{Q}_{\{\sigma,\epsilon-\nu t, \tau-\nu t \}}\right)}\\
& \leq\left\|\vartheta_{2}J^{s+1}u\right\|_{L^{2}}\\
&\leq \sum_{m=1}^{n}|c_{m}|\left\|\vartheta_{2}\Psi_{m}\partial_{x_{m}}J^{s}u\right\|_{L^{2}}+\left\|\vartheta_{2}J^{s-1}u\right\|_{L^{2}}\\
 & =\sum_{m=1}^{n}\gamma_{m} \left\{ \left\|\left[\Psi_{m}; \vartheta_{1}\right]\vartheta_{2}\partial_{x_{m}}J^{s}u\right\|_{L^{2}}+\left\|\vartheta_{2}\Psi_{m}\left(\left(1-\vartheta_{1}\right)\partial_{x_{m}}J^{s}f\right)\right\|_{L^{2}} \right.\\
 & \left. \quad + \left\|\Psi_{m}\left(\vartheta_{2}\partial_{x_{m}}J^{s}u\right)\right\|_{L^{2}}\right\} +\left\|\vartheta_{2}J^{s-1}u\right\|_{L^{2}}\\
 &\lesssim_{\epsilon,\tau,n}  \sum_{m=1}^{n}\gamma_{m}\left\{\left\|\vartheta_{2}\partial_{x_{m}}J^{s}u\right\|_{L^{2}} +\|u_{0}\|_{L^{2}}+\left\|\vartheta_{2}\partial_{x_{m}}J^{s}u\right\|_{L^{2}}\right\}+\left\|\vartheta_{2}J^{s-1}u\right\|_{2}\\
 &\lesssim_{\epsilon,\tau,n}\sum_{m=1}^{n}\gamma_{m}\left\{\left\|\partial_{x_{m}}J^{s}u\right\|_{L^{2}\left(\mathcal{Q}_{\left\{\sigma,\frac{3\epsilon}{2}-\nu t, \tau-\epsilon-\nu t \right\}}\right)} +\|u_{0}\|_{L^{2}}\right.\\
 &\quad \left. +\left\|\partial_{x_{m}}J^{s}u\right\|_{L^{2}\left(\mathcal{Q}_{\left\{\sigma,\frac{\epsilon}{2}-\nu t, \tau+\epsilon-\nu t \right\}}\right)}\right\}+ \left\|\vartheta_{2}J^{s-1}u\right\|_{L^{2}}.
 \end{split}
 \end{equation*}
 Hence, in virtue of \eqref{p1} and  \eqref{e22}, it  is clear that 
 \begin{equation}\label{e23}
 \begin{split}
\left\|J^{s+1}u\right\|_{L^{2}_{T}L^{2}\left(\mathcal{Q}_{\{\sigma,\epsilon-\nu t, \tau-\nu t \}}\right)}^{2}
 &\lesssim_{\epsilon,\tau,n} T\left(\|u_{0}\|_{L^{2}}^{2}+c^{*}_{(s-1)}\right)+\left(\frac{c^{*}_{(s)}}{\lambda(\sigma)}\right).
 \end{split}
 \end{equation}
 Gathering the estimates above we get that: for all $\nu\geq 0, \epsilon>0$ and $\tau\geq 5\epsilon,$ the following inequality holds
 \begin{equation*}
 \left\|J^{s+1}u\right\|_{L^{2}_{T}L^{2}\left(\mathcal{Q}_{\{\sigma,\epsilon-\nu t, \tau-\nu t \}}\right)}\lesssim_{\epsilon,\tau,n} c^{**}_{(s)},
 \end{equation*}
whence $c^{**}_{(s)}:=c\left(T\left(\|u_{0}\|_{L^{2}}^{2}+c^{*}_{(s-1)}\right)+\left(\frac{c^{*}_{(s)}}{\lambda(\sigma)}\right)\right)^{1/2}$\, and \,$c=c(\epsilon,\tau,n)$ is a positive constant depending on the parameters indicated.
\end{proof}
\begin{flushleft}
	{\sc Claim 2:}
\end{flushleft}
If for any $\nu\geq 0,\, \epsilon>0$ and $\tau\geq 5\epsilon$
\begin{equation}\label{smoothing1}
\left\|J^{s+1}u\right\|_{L^{2}_{T}L^{2}\left(\mathcal{Q}_{\{\sigma,\epsilon-\nu t, \tau-\nu t \}}\right)}\leq c^{**}_{(s)},
\end{equation}
then for any $\epsilon>0,\, \nu \geq 0 $ and $\tau\geq 5\epsilon,$ there exist a positive  constant such that 
\begin{equation}\label{smoothing2}
 \left\|J^{r}u\right\|_{L^{2}_{T}L^{2}\left(\mathcal{Q}_{\{\sigma,\epsilon-\nu t, \tau-\nu t \}}\right)}\leq c^{***}_{(r)},\quad \mbox{for any}\quad r\in (0,s+1].
 \end{equation}
 
 \begin{proof}[\sc Proof of claim 2:]
 The proof follows by using an argument quite similar to the one used in the proof of lemma \ref{zk37}. Nevertheless, we will indicate some details about the proof.
 
First that all we  shall give us  enough room to handle the operators involved in the smoothing effect \eqref{smoothing1}. For that, notice that the inequality \eqref{smoothing1} holds for any $\epsilon>0$ and $\tau\geq5\epsilon$ in particular if we choose $(\epsilon,\tau)=\left(\frac{2\epsilon}{3},2\tau+\frac{2\epsilon}{3}\right).$

Next, we consider  $\theta_{1},\theta_{2}$  smooth functions satisfying: $0\leq \theta_{1},\theta_{2}\leq 1,$ with   bounded  derivatives, verifying the  conditions indicated below
\begin{equation*}
\supp\left(\theta_{1}\right)\subseteq \mathcal{Q}_{\left\{\sigma,\frac{\epsilon}{3},\tau+\frac{2\epsilon}{3}\right\}}\quad\mbox{with}\quad \theta_{1}\equiv 1 \quad \mbox{on}\quad \overline{\mathcal{Q}_{\left\{\sigma,\frac{2\epsilon}{3},\tau+\frac{\epsilon}{3}\right\}}},
\end{equation*}
and 
\begin{equation*}
\supp\left(\theta_{2}\right)\subseteq \mathcal{Q}_{\left\{\sigma,\frac{2\epsilon}{3},\tau+\frac{\epsilon}{3}\right\}}\quad\mbox{with}\quad \theta_{2}\equiv 1 \quad \mbox{on}\quad \overline{\mathcal{Q}_{\left\{\sigma,\epsilon,\tau\right\}}}.
\end{equation*}
From these conditions it is clear that the following chain of inequalities holds
\begin{equation}\label{ine1}
\begin{split}
%
\int_{0}^{T}\left\|J^{s+1}u(\cdot,t)\theta_{2}\left(\sigma\cdot +\nu t\right)\right\|^{2}_{L^{2}}\mathrm{d}t
&\leq \int_{0}^{T}\left\|J^{s+1}u(\cdot,t)\theta_{1}\left(\sigma\cdot +\nu t\right)\right\|^{2}_{L^{2}}\mathrm{d}t\\
&\leq \int_{0}^{T}\int_{\mathcal{Q}_{\left\{\sigma,\frac{\epsilon}{3}-\nu t,\frac{2\epsilon}{3}+2\tau-\nu t\right\}}}\left|J^{s+1}u(x,t)\right|^{2}\mathrm{d}x\,\mathrm{d}t\\
&\leq c^{**}_{(s)}.
\end{split}
\end{equation}
Next,  for $t\in (0,T)$ (fixed) we define the function
\begin{equation*}
F_{t}(z)=\theta_{2}\left(x+ \frac{\nu t\sigma}{|\sigma|^{2}}\right)J^{z}u(x,t),\quad z=\alpha+i\beta\in \mathbb{C}, \,\alpha\in [0,s+1]\quad\mbox{and}\quad \beta\in \mathbb{R}.
\end{equation*}
Notice that from   theorem \ref{zkth1} it is clear that 
\begin{equation*}
F_{t}(i\beta)=\theta_{2}\left(\cdot+ \frac{\nu t\sigma}{|\sigma|^{2}}\right)J^{i\beta}u(\cdot,t)\in L^{2}(\mathbb{R}^{n}).
\end{equation*}
Instead, in the case we evaluate $F_{t}$ at $z=\alpha+i\beta,$  we notice that it  falls on the scope     of lemma \ref{zk19}, so 
 that, 
 \begin{equation*}
 F_{t}(\alpha+i\beta)=\theta_{2}\left(\cdot+ \frac{\nu t\sigma}{|\sigma|^{2}}\right)J^{\alpha+i\beta}u(\cdot,t)=\theta_{2}\left(\cdot+ \frac{\nu t\sigma}{|\sigma|^{2}}\right)J^{i\beta}J^{s}u(\cdot,t)\in L^{2}(\mathbb{R}^{n}).
 \end{equation*}
 Hence,  by the three lines lemma  and  Young's inequality, we get  for  fixed $t,$ the following: 
\begin{equation}\label{sm1}
\left\|\theta_{2}\left(\cdot+ \frac{\nu t\sigma}{|\sigma|^{2}}\right)J^{r}u(\cdot,t)\right\|_{L^{2}}\lesssim \left\|u_{0}\right\|_{L^{2}}+\left\|\theta_{2}\left(\cdot+ \frac{\nu t\sigma}{|\sigma|^{2}}\right)J^{s}u(\cdot,t)\right\|_{L^{2}},
\end{equation}
for $r\in (0,s].$

A careful inspection of the constant involved in the inequalities \eqref{sm1} show that these one's do not depend on the temporal variable. So that, after  integrating both sides in \eqref{sm1} we finally obtain: For any $\epsilon>0,\, \tau\geq 5\epsilon$ and $\nu\geq 0,$
\begin{equation*}
\int_{0}^{T}\int_{\mathcal{Q}_{\left\{\sigma,\epsilon-\nu t,\tau-\nu t\right\}}}\left|J^{r}u(x,t)\right|^{2}\mathrm{d}x\,\mathrm{d}t\lesssim \underbrace{T^{\frac{1}{2}}\left\|u_{0}\right\|_{L^{2}}+c^{**}_{(s)}}_{=: \, c^{***}_{(r)}},\quad r\in (0,s+1].
\end{equation*}
\end{proof}
Since  the technical details necessary has been clarified,   we turn back  our attention to the inductive process.
 \begin{flushleft}
	{\sc Case: ${\displaystyle s\in\left(l,l+1\right),\, l\in \mathbb{N}, \, l> s_{n}+1.}$}
 \end{flushleft}
As usual, our starting point is the identity \eqref{zkwee},  so that, to follow with the inductive argument we estimate the  corresponding terms  coming from such identity. 

Firstly,  we  handle $A_{1}.$ So,  the  main idea is to use the smoothing effect obtained from the former  case i.e,  $s\in(l-1,l],$ that as we have seen it provides one extra local derivative. 

 Nevertheless,  combining the inductive hypothesis, claim 1 and claim 2 we obtain that:
For  $\nu \geq 0, \, \epsilon>0,$ and   $\tau\geq 5\epsilon$ 
 \begin{equation}\label{e17}
\int_{0}^{T}\int_{\mathcal{Q}_{\{\sigma,\epsilon-\nu t ,\tau-\nu t\}}}\left(J^{r}u(x,t)\right)^{2}\,\mathrm{d}x\,\mathrm{d}t\leq c^{***}_{(r)}, \quad \mbox{for any}\quad  r\in (0,l+1]. 
 \end{equation}
So that, with this remark at hand it is enough to combine the properties of the weighted function  and \eqref{e17} to finally obtain 
\begin{equation*}	
\begin{split}
&\int_{0}^{T}\int_{\mathcal{Q}_{\{\sigma,\epsilon-\nu t ,\tau-\nu t\}}}\left(J^{r}u(x,t)\right)^{2}\,\mathrm{d}x\,\mathrm{d}t\\
&\lesssim_{\epsilon,\tau}\int_{0}^{T}\int_{\mathbb{R}^{n}}\left(\chi_{\epsilon/3, \tau+\epsilon}\chi'_{\epsilon/3,\tau+\epsilon}\right)\left(\sigma\cdot x+\nu t \right)\left(J^{r}u(x,t)\right)^{2}\,\mathrm{d}x\,\mathrm{d}t.
\end{split}
\end{equation*}		

Therefore, for $\epsilon>0$ and $\tau\geq 5\epsilon$
we obtain that 
\begin{equation*}
\begin{split}
\int_{0}^{T}|A_{1}(t)|\,\mathrm{d}t  
&=c\int_{0}^{T}\int_{\mathcal{Q}_{\{\sigma,\epsilon-\nu t ,\tau-\nu t\}}}\left(J^{s}u(x,t)\right)^{2}\,\mathrm{d}x\,\mathrm{d}t\\
&\lesssim_{\epsilon,\tau,\nu} \left(\frac{c^{***}_{(s-1)}}{\lambda(\sigma)}\right).
\end{split}
\end{equation*}

Next, we have that 
\begin{equation*}
\begin{split}
\int_{0}^{T} |A_{4}(t)|\,\mathrm{d}t 
&= 3cc_{1}\sigma_{1}|\sigma|^{2} \int_{0}^{T}\int_{\mathcal{Q}_{\{\sigma,\epsilon-\nu t ,\tau-\nu t\}}}\left(J^{s}u(x,t)\right)^{2}\,\mathrm{d}x\,\mathrm{d}t\\
&\lesssim_{\epsilon,\tau,\sigma}\left( \frac{c^{***}_{(s-1)}}{\lambda(\sigma)}\right),
\end{split}
\end{equation*}
being the last inequality a consequence of \eqref{e17}   with $r=s-1<l.$

Next, by combining the properties of the weighted functions  we obtain
\begin{equation*}
\begin{split}
\int_{0}^{T}|A_{5}(t)|\,\mathrm{d}t 
&\lesssim_{\epsilon,\tau,\sigma}  \int_{0}^{T}\int_{\mathcal{Q}_{\{\sigma,\epsilon-\nu t ,\tau-\nu t\}}}\left(J^{s}u(x,t)\right)^{2}\,\mathrm{d}x\,\mathrm{d}t\\
&\lesssim_{\epsilon,\tau,\sigma} \left(\frac{c^{***}_{(s-1)}}{\lambda(\sigma)}\right),
\end{split}
\end{equation*}
being the last inequality a consequence of \eqref{e17}   with $r=s-1<l.$

In what concerns to the terms $A_{2,m}$ and $A_{3,m},$  an analysis similar to the used in \eqref{interec}-\eqref{interaction3} imply  the existence of  a positive constant  $\lambda=\lambda(\sigma),$  such that    
\begin{equation}\label{interaction3.1}
\begin{split}
&\lambda(\sigma)\sum_{m=1}^{n} \int_{\mathbb{R}^{n}} \left(\partial_{x_{m}}J^{s}u(x,t)\right)^{2}\left(\chi_{\epsilon,\tau}\,\chi_{\epsilon,\tau}'\right)\left(\sigma\cdot x+\nu t\right)\,\mathrm{d}x\\
&\lesssim 
\sigma_{1}\sum_{m=1}^{n} \int_{\mathbb{R}^{n}} \left(\partial_{x_{m}}J^{s}u(x,t)\right)^{2}\left(\chi_{\epsilon,\tau}\,\chi_{\epsilon,\tau}'\right)(\sigma\cdot x+\nu t)\,\mathrm{d}x\\
&\quad +2\sum_{m=1}^{n}\sigma_{m}\int_{\mathbb{R}^{n}}\left(\partial_{x_{m}}J^{s}u(x,t)\right)\left(\partial_{x_{1}}J^{s}u(x,t)\right)\left(\chi_{\epsilon,\tau}\,\chi_{\epsilon,\tau}'\right)(\sigma\cdot x+\nu t)\,\mathrm{d}x.
\end{split}
\end{equation}

Notice that the terms in the l.h.s above are positive, 
and after integrating in time these will  provide the  smoothing effect. 



Next, we handle the non-linear part. In this sense,   we will denote   the commutator term    ${\displaystyle 
	\Psi_{\zeta_{\epsilon,\tau,s,\sigma}}=\left[J^{s}; \chi_{\epsilon,\tau,\sigma}\right]\in \mathrm{OP}\mathbb{S}^{s-1}.}$

 According to Proposition \ref{prop1},  the symbol $\zeta_{\epsilon,\tau,s,\sigma}$  admits the following decomposition: 
 \begin{equation*}
 \begin{split}
 \zeta_{\epsilon,\tau,s,\sigma}(x,\xi)\sim \sum_{\alpha} \frac{(2\pi i)^{-|\alpha|}}{\alpha!}\left(\partial^{\alpha}_{\xi}\left(\langle \xi\rangle^{s}\right)\partial_{x}^{\alpha}\left(\chi_{\epsilon,\tau,\sigma}(x,t)\right)\right)\quad \mbox{for any}\quad t\in \mathbb{R}.
 \end{split}
 \end{equation*}
 More precisely,  
 \begin{equation*}\label{}
 \begin{split}
 &\zeta_{\epsilon,\tau,s,\sigma}(x,\xi)\\
 &=\sum_{1\leq|\alpha|\leq l}\frac{(2\pi i)^{-|\alpha|}}{\alpha!}\left\{\partial^{\alpha}_{\xi}\left(\langle \xi\rangle^{s}\right)\partial_{x}^{\alpha}\left(\chi_{\epsilon,\tau,\sigma}(x,t)\right)\right\}+\kappa_{s-l-1}(x,\xi)\\
 &=\sum_{j=1}^{l}\sum_{|\alpha|= j}\frac{(2\pi i)^{-|\alpha|}}{\alpha!}\left\{\partial^{\alpha}_{\xi}\left(\langle \xi\rangle^{s}\right)\partial_{x}^{\alpha}\left(\chi_{\epsilon,\tau,\sigma}(x,t)\right)\right\}+\kappa_{s-l-1}(x,\xi)\\
 &=c_{1}(s)\left(\sum_{|\alpha|=1}(2\pi i\xi)^{\alpha}\sigma^{\alpha}\langle\xi\rangle^{s-2}\chi_{\epsilon,\tau}^{(1)}\left(\sigma\cdot x+\nu t\right)+\sum_{ \mathclap{\substack{|\alpha|=2\\\alpha=\alpha_{1}+\alpha_{2}\\|\alpha_{1}|=|\alpha_{2}|=1}}}\delta_{\alpha_{1},\alpha_{2}}\frac{\sigma^{\alpha}}{\alpha!}\chi_{\epsilon,\tau}^{(2)}\left(\sigma\cdot x+\nu t\right)\langle \xi\rangle^{s-2}\right)\\
 &\, +c_{2}(s)\sum_{\mathclap{\substack{|\alpha|=3\\\alpha=\alpha_{1}+\alpha_{2}+\alpha_{3}\\|\alpha_{1}|=\cdots=|\alpha_{3}|=1}}}\frac{\sigma^{\alpha}}{\alpha!}\left(\delta_{\alpha_{1},\alpha_{2}}(2\pi i \xi )^{\alpha_{3}}+\delta_{\alpha_{3},\alpha_{1}}(2\pi i\xi)^{\alpha_{2}}+\delta_{\alpha_{3},\alpha_{2}}(2\pi i\xi)^{\alpha_{1}}\right)\langle \xi \rangle^{s-4}\chi_{\epsilon,\tau}^{(3)}\left(\sigma\cdot x+\nu t\right)\\
 &\, +c_{3}(s)\sum_{\mathclap{\substack{|\alpha|=3\\\alpha=\alpha_{1}+\alpha_{2}+\alpha_{3}\\|\alpha_{1}|=\cdots=|\alpha_{3}|=1}}}\frac{\sigma^{\alpha}}{\alpha!}\left((2\pi i\xi )^{\alpha_{3}}\langle \xi \rangle ^{s-6}\right)\chi_{\epsilon,\tau}^{(3)}\left(\sigma\cdot x+\nu t\right)\\
 &\,
  + \cdots +\sum_{|\alpha|= l}\frac{(2\pi i)^{-|\alpha|}\sigma^{\alpha}}{\alpha!}\left\{\partial^{\alpha}_{\xi}\left(\langle \xi\rangle^{s}\right)\chi_{\epsilon,\tau}^{(|\alpha|)}\left(\sigma\cdot x+\nu t\right)\right\}+\kappa_{s-l-1}(x,\xi),
 \end{split}
 \end{equation*}
 where $\kappa_{s-l-1} \in \mathbb{S}^{l+1-s}\subset \mathbb{S}^{0}.$  Precisely, we associate to the symbol $\kappa_{s-l-1}$   the operator 
 \begin{equation*}
\Psi_{\kappa_{s-l-1}}g(x):= \int_{\mathbb{R}^{n}}e^{2\pi i x\cdot \xi} \kappa_{s-l-1}(x,\xi)\, \widehat{g}(\xi)\,\mathrm{d}\xi,\qquad g\in \mathcal{S}(\mathbb{R}^{n}).
 \end{equation*}
 In view that $\kappa_{s-l-1}\in \mathbb{S}^{0}$ and Theorem \ref{zkth1} it is clear that  $\Psi_{\kappa_{s-l-1}}$ maps  $L^{2}(\mathbb{R}^{n})$ into itself i.e,
 \begin{equation*}
 \left\|\Psi_{\kappa_{s-l-1}}f\right\|_{L^{2}}\lesssim \|f\|_{L^{2}} 
 \end{equation*}
 for $f$ in an appropriated class of functions.
 
 Also, for multi-index $\alpha,\beta $ with $\beta\leq \alpha$ we define  
 \begin{equation*}
 \eta_{\alpha,\beta}(x,\xi):=\frac{(2\pi i\xi)^{\beta}}{\left(1+|\xi|^{2}\right)^{\frac{|\alpha|}{2}}},\qquad x,\xi\in\mathbb{R}^{n},
 \end{equation*}
 whence  $ \eta_{\alpha,\beta}\in \mathbb{S}^{|\beta|-|\alpha|}\subset\mathbb{S}^{0},$ and we associated to it the operator 
 \begin{equation}\label{e11.1}
 \Psi_{\eta_{\alpha,\beta}}g(x):=\int_{\mathbb{R}^{n}}e^{2\pi ix\cdot \xi}\eta_{\alpha,\beta}(x,\xi)\widehat{g}(\xi)\,\mathrm{d}\xi,\quad g\in \mathcal{S}(\mathbb{R}^{n}),
 \end{equation}
 that according to Theorem \ref{zkth1} it satisfies $${\displaystyle \left\|\Psi_{\eta_{\alpha,\beta}}g\right\|_{L^{2}} \lesssim \|g\|_{L^{2}}.}$$
 
 Now, by rearranging the terms above in the decomposition of the symbol $\zeta_{\epsilon,\tau,s,\sigma}$,    we obtain 
\begin{equation*}
\begin{split}
\Psi_{\zeta_{\epsilon,\tau,s,\sigma}}f(x)=\sum_{j=1}^{l}\sum_{|\alpha|=j}\sum_{\beta\leq\alpha} \omega_{\alpha,\beta,\sigma,s} \,\partial_{x}^{\alpha}\chi_{\epsilon, \tau,\sigma}(x,t)\Psi_{\eta_{\alpha,\beta}}J^{s-|\alpha|}f(x)+ \Psi_{\kappa_{s-m-1}}f(x), 
\end{split}
\end{equation*}
where  $f\in \mathcal{S}(\mathbb{R}^{n})$ and   $\omega_{\alpha,\beta,\sigma,s}$ denotes a constant depending on the parameters indicated.

Now, we turn back our attention to the terms involving this commutator term. So that, combining    interpolation and  inequality \eqref{zk8} we get  
\begin{equation}\label{e12.1}
\begin{split}
&\left\|B_{6,1}\right\|_{L^{2}}\\
&\leq \sum_{j=1}^{l}\sum_{|\alpha|=j}\sum_{\beta\leq\alpha}|\omega_{\alpha,\beta,\sigma,s}| \,\left\|\chi_{\epsilon, \tau}^{(|\alpha|)}(\sigma \cdot(\cdot)+\nu t) \right\|_{L^{\infty}_{x}}\left\|\Psi_{\eta_{\alpha,\beta}}J^{s-|\alpha|}\left(\partial_{x_{1}}\left(\left(u\chi_{\epsilon,\tau,\sigma}\right)^{2}\right)\right)\right\|_{L^{2}}\\
&\quad +\left\| \Psi_{\kappa_{s-l-1}}\left(\partial_{x_{1}}\left(\left(u\chi_{\epsilon,\tau,\sigma}\right)^{2}\right)\right)\right\|_{L^{2}}\\
&\lesssim_{\epsilon,\tau,\sigma,l,n}\left\|u\right\|_{L^{\infty}}\left\|J^{s}\left(u\chi_{\epsilon,\tau,\sigma}\right)\right\|_{L^{2}}.
\end{split}
\end{equation}
Analogously,
\begin{equation}\label{e13.1}
\begin{split}
&\left\|B_{6,2}\right\|_{L^{2}}\\
&\leq \sum_{j=1}^{l}\sum_{|\alpha|=j}\sum_{\beta\leq\alpha} |\omega_{\alpha,\beta,\sigma,s}| \,\left\|\partial_{x}^{\alpha}\chi_{\epsilon, \tau,\sigma}(\cdot,t)\right\|_{L^{\infty}_{x}}\left\|\Psi_{\eta_{\alpha,\beta}}J^{s-|\alpha|}\left(\partial_{x_{1}}\left(\left(u\widetilde{\phi_{\epsilon,\tau}}\right)^{2}\right)\right)\right\|_{L^{2}}\\
&\quad +\left\| \Psi_{\kappa_{s-l-1}}\left(\partial_{x_{1}}\left(\left(u\widetilde{\phi_{\epsilon,\tau}}\right)^{2}\right)\right)\right\|_{L^{2}}\\
&\lesssim_{\epsilon,\tau,\sigma,l,n,s}\left\|u\right\|_{L^{\infty}}\left\|J^{s}\left(u\widetilde{\phi_{\epsilon,\tau,\sigma}}\right)\right\|_{L^{2}}.
\end{split}
\end{equation}
Since we have finished estimating the terms that require more effort, we focus our  attention on the remaining terms.

In the first place,
we note that the same argument used in the previous case for the terms $B_{6,3}$ and $B_{6,6}$ produce
\begin{equation*}
\|B_{6,3}\|_{L^{2}}\lesssim_{\epsilon,\sigma} \|u_{0}\|_{L^{2}}\|u\|_{L^{\infty}} \quad \mbox{and}\quad \left\|B_{6,6}\right\|_{L^{2}}
\lesssim \|u\|_{L^{\infty}}\|u_{0}\|_{L^{2}}.
\end{equation*}
 In the second place, the inequality \eqref{zk2} imply that 
 \begin{equation}\label{e14.1}
 \left\|\left[J^{s}; u\chi_{\epsilon, \tau,\sigma}\right]\partial_{x_{1}}\left(u\chi_{\epsilon,\tau,\sigma}\right)\right\|_{L^{2}}\lesssim \left\|J^{s}\left(u\chi_{\epsilon, \tau,\sigma}\right)\right\|_{L^{2}}\left\|\partial \left(u \chi_{\epsilon, \tau,\sigma }\right)\right\|_{L^{\infty}},
 \end{equation}
 and 
 \begin{equation}\label{e15.1}
 \begin{split}
 \left\|\left[J^{s}; u\chi_{\epsilon, \tau,\sigma }\right]\partial_{x_{1}}\left(u\phi_{\epsilon, \tau,\sigma }\right)\right\|_{L^{2}}
 &\lesssim \left\|J^{s}\left(u\phi_{\epsilon,\tau,\sigma }\right)\right\|_{L^{2}}\left\|\nabla \left(u\chi_{\epsilon, \tau,\sigma }\right)\right\|_{L^{\infty}}\\
 &\quad  +\left\|J^{s}\left(u\chi_{\epsilon, \tau,\sigma }\right)\right\|_{L^{2}}\left\|\partial_{x_{1}}\left(u\phi_{\epsilon, \tau,\sigma }\right)\right\|_{L^{\infty}}.
 \end{split}
 \end{equation}
 As in the previous case, our analysis requires to  estimate several terms in \eqref{e12.1}- \eqref{e15.1} for which we have not provided  with upper bounds.  To  finish our argument
 we  estimate 
\begin{equation*}
\begin{split}
J^{s}\left(u\chi_{\epsilon, \tau,\sigma }\right)
&=\chi_{\epsilon, \tau,\sigma }J^{s}u+\left[J^{s}; \chi_{\epsilon, \tau,\sigma }\right]\left(u\chi_{\epsilon, \tau,\sigma }+u\phi_{\epsilon, \tau,\sigma }+u\psi_{\epsilon,\sigma}\right)\\
&=I+II+III+IV.
\end{split}
\end{equation*}
Notice that $I$  is the quantity to be estimated. Instead, the  remainder terms are of lower order and after integrating in the time variable these can be bounded by combining \eqref{e17} and  lemma \ref{lemm}.

Similarly,  the terms $\left\|J^{s}\left(u\phi_{\epsilon, \tau,\sigma}\right)\right\|_{L^{2}},\, \left\|J^{s}\left(u\widetilde{\phi_{\epsilon, \tau,\sigma }}\right)\right\|_{L^{2}}$ can be bounded by combining lemma \ref{lemm} and   \eqref{e17}.

Finally, we gather the information relative to this step,   followed by an application of Gronwall's inequality and integration in time whence we get   that  for any $ \nu\geq 0, \epsilon>0, \, \tau \geq 5 \epsilon,$  
\begin{equation}\label{e21}
\sup_{0\leq t\leq T}\left\|J^{s}u\right\|_{L^{2}\left(\mathcal{H}_{\{\sigma,\epsilon-\nu t\}}\right)}^{2}+\sum_{m=1}^{n}\lambda(\sigma)\left\|\partial_{x_{m}}J^{s}u\right\|_{L^{2}_{T}L^{2}\left(\mathcal{Q}_{\{\sigma,\epsilon-\nu t, \tau-\nu t \}}\right)}^{2}\leq c^{****}_{(s)},
\end{equation}
where $c^{****}_{(s)}$ is a positive constant depending on the following quantities  \\${\displaystyle c^{****}_{(s)}=c^{****}_{(s)}\left(\epsilon;\tau;\sigma;\lambda;\nu;n;s;T;\left\|J^{s}u_{0}\right\|_{L^{2}\left(\mathcal{H}_{\{\sigma,\epsilon\}}\right)};\left\|u\right\|_{L^{\infty}_{T}H^{s_{n}+}}\right)>0.}$

	This last inequality finish the inductive argument. 
	
	
	Notice that 
from \eqref{e21} it can be deduced  \eqref{g1}  after combining   lemma \ref{zk37} and the properties of the weighted functions. Instead, to obtain \eqref{g2.1}  it can be  used an argument  quite similar  to the one  used in the proof of claim 1. 

Finally  we gather the estimates obtained to conclude  the proof of \eqref{g1} and \eqref{g2.1}.
\end{proof}
Next, we focus our attention in to understand the behavior of the $J^{s}u$ when we restrict  to the remainder part of the space, instead of the half-space where the propagation occurs. For that, we firts present   previous results that will help  us to provide the bounds required in the proof of Corollary \ref{cor11}.
\begin{cor}\label{cor1}
 Let  $\sigma=(\sigma_{1},\sigma_{2},\dots,\sigma_{n})\in \mathbb{R}^{n}$  with
$
\sigma_{1}>0,\,\, \sigma_{n},\dots,\sigma_{n}\geq  0.$	Let $f:\mathcal{H}_{\{\sigma,0\}}\longrightarrow [0,\infty)$ be a continuous   function, such that  for every $\alpha>0,$
	\begin{equation}\label{hyp1}
	\int_{\mathcal{Q}_{\{\sigma,0,\alpha\}}}f(x)\,\mathrm{d}x\leq c\alpha^{q},
	\end{equation} 
	for some $q>0$ and some positive constant $c.$
	
	Then, for  every $\delta>0,$
	\begin{equation}\label{HYP2}
	\int_{\mathcal{H}_{\{\sigma,0\}}}\frac{f(x)}{\langle \sigma\cdot x\rangle^{q+\delta}}\,\mathrm{d}x\leq c(\delta,\sigma,q).
	\end{equation}
\end{cor}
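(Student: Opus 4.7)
The plan is to dyadically decompose the half-space $\mathcal{H}_{\{\sigma,0\}}$ according to the size of $\sigma\cdot x$, bound $f$ on each piece using the hypothesis \eqref{hyp1}, and then exploit that the weight $\langle \sigma\cdot x\rangle^{-(q+\delta)}$ decays fast enough to make the resulting geometric series converge whenever $\delta>0$.

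More concretely, I would set $A_{0}:=\mathcal{Q}_{\{\sigma,0,1\}}$ and $A_{k}:=\mathcal{Q}_{\{\sigma,2^{k-1},2^{k}\}}$ for $k\geq 1$, so that up to a set of measure zero $\mathcal{H}_{\{\sigma,0\}}=\bigcup_{k\geq 0}A_{k}$ is a disjoint union. On $A_{k}$ with $k\geq 1$ the inequality $\sigma\cdot x\geq 2^{k-1}$ yields $\langle \sigma\cdot x\rangle^{-(q+\delta)}\leq 2^{-(k-1)(q+\delta)}$, while the monotonicity of the integral of $f\geq 0$ together with the hypothesis \eqref{hyp1} gives
\begin{equation*}
\int_{A_{k}}f(x)\,\mathrm{d}x\;\leq\;\int_{\mathcal{Q}_{\{\sigma,0,2^{k}\}}}f(x)\,\mathrm{d}x\;\leq\; c\,2^{kq}.
\end{equation*}
Combining these two estimates on each annulus produces a bound of order $2^{-k\delta}$ (up to a harmless factor $c\,2^{q+\delta}$). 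The dyadic piece $A_{0}$ is handled trivially by $\langle \sigma\cdot x\rangle\geq 1$ and \eqref{hyp1} with $\alpha=1$.

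Summing,
\begin{equation*}
\int_{\mathcal{H}_{\{\sigma,0\}}}\frac{f(x)}{\langle \sigma\cdot x\rangle^{q+\delta}}\,\mathrm{d}x\;\leq\; c+c\,2^{q+\delta}\sum_{k=1}^{\infty}2^{-k\delta}\;=\;c+c\,2^{q+\delta}\,\frac{2^{-\delta}}{1-2^{-\delta}},
\end{equation*}
which is finite and depends only on $\delta$, $q$, and $\sigma$ (the latter enters only through the constant in \eqref{hyp1}, together with the implicit role of $|\sigma|$ in the geometry of the strips). This yields \eqref{HYP2} with an explicit constant $c(\delta,\sigma,q)$.

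There is no genuine obstacle here: the entire argument is a textbook dyadic decomposition plus a geometric series, and the only thing to be careful about is that the implicit constants in \eqref{hyp1} are truly uniform in $\alpha$, which is how the hypothesis is stated. The role of the condition $\sigma_{1}>0$, $\sigma_{j}\geq 0$ is merely to guarantee that the half-space and strips $\mathcal{H}_{\{\sigma,0\}}$, $\mathcal{Q}_{\{\sigma,0,\alpha\}}$ are non-degenerate so that the decomposition makes sense.
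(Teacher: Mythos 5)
Your proof is correct and rests on the same core idea as the paper's: a dyadic decomposition of the half-space according to the size of $\sigma\cdot x$, the hypothesis \eqref{hyp1} applied at dyadic scales, and a geometric series whose convergence is exactly equivalent to $\delta>0$. The difference is that the paper first performs an affine change of variables $x=Ay$ with $\det A=1/\sigma_{1}$ to reduce to $\sigma=\mathrm{e}_{1}$, covers the resulting half-line by overlapping dyadic strips $[2^{k-1},2^{k+1}]$, and sums against a smooth partition of unity $\{\psi_{k}\}$; you instead decompose directly into the pairwise-disjoint strips $\mathcal{Q}_{\{\sigma,2^{k-1},2^{k}\}}$ and use only the monotonicity $A_{k}\subset\mathcal{Q}_{\{\sigma,0,2^{k}\}}$. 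Both routes give the same qualitative constant of the form $c(1+2^{O(q+\delta)}/(2^{\delta}-1))$; your version avoids the Jacobian bookkeeping and the partition of unity entirely (the change of variables is genuinely redundant here, since the weight and the strips are both expressed through the scalar $\sigma\cdot x$), so it is a cleaner, more elementary write-up of the same estimate. Your final remark is slightly loose — the bound you obtain does not in fact depend on $\sigma$ at all beyond the constant $c$ in \eqref{hyp1}, which is if anything an improvement over the paper's $c/\sigma_{1}$ factor — but this does not affect correctness.
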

 \begin{proof}
Firstly, we  make  the following  change of variable   
\begin{equation*}
\int_{\mathcal{H}_{\{\sigma,0\}}}\frac{f(x)}{\langle \sigma\cdot x\rangle^{q+\delta}}\,\mathrm{d}x=\frac{1}{\sigma_{1}}\int_{\mathcal{H}_{\{\mathrm{e}_{1},0\}}}\frac{f\left(Ay\right)}{\left\langle   y_{1}\right\rangle^{q+\delta}}\,\mathrm{d}y
\end{equation*}	
where  $A\in \mathcal{M}^{n\times n}(\mathbb{R}).$ 
More precisely,
\begin{equation*}
A=\begin{pmatrix}
\frac{1}{\sigma_{1}}&-\frac{\sigma_{2}}{\sigma_{1}}&\cdots&-\frac{\sigma_{n}}{\sigma_{1}}\\
0 &1&\cdots&0\\
\vdots&\vdots&\ddots&\vdots\\
0&0&\cdots& 1
\end{pmatrix}
.
\end{equation*}
Next, notice that 
\begin{equation}\label{decomp2}
\mathcal{H}_{\{\mathrm{e}_{1},0\}}\subset\mathcal{Q}_{\{\mathrm{e}_{1},-1,1\}}\cup\bigcup_{k\in \mathbb{N}}\mathcal{Q}_{\left\{\mathrm{e}_{1},2^{k-1},2^{k+1}\right\}}.
\end{equation}
So that, in virtue of the decomposition  indicated above, we consider $\{\psi_{k}\}_{k\geq 0}$ be a smooth partition of unity  of $\mathbb{R}^{+}$ such that 
\begin{equation*}
\supp\left(\psi_{k}\right)\subseteq [2^{k-1},2^{k+1}]\qquad\mbox{for}\quad k=1,2,\cdots,
\end{equation*} 
and  $\supp\left(\psi_{0}\right)\subseteq [-1,1].$ Under the  conditions specified above  we have that 
\begin{equation}\label{decomp1}
1=\psi_{0}(x)+\sum_{k\in\mathbb{N}} \psi\left(2^{-k}x\right)\quad \mbox{for all}\quad x\in\mathbb{R}, x\geq 0.
\end{equation}
Then, combining the hypothesis \eqref{hyp1} and  \eqref{decomp1}-\eqref{decomp2}  we obtain 
\begin{equation*}
\begin{split}
\int_{\mathcal{H}_{\{\sigma,0\}}} \frac{f(x)}{\langle \sigma\cdot x\rangle^{q+\delta}}\,\mathrm{d}x&=\frac{1}{\sigma_{1}}\int_{\mathcal{H}_{\{\mathrm{e}_{1},0\}}} \frac{f(Ax)}{\langle x_{1}\rangle^{q+\delta}}\,\mathrm{d}x\\
&\leq\frac{1}{\sigma_{1}}\int_{\mathcal{Q}_{\left\{\mathrm{e}_{1},0,1\right\}}} \frac{\psi_{0}(x_{1})}{\langle x_{1}\rangle^{q
		+\delta}}f(Ax)\,\mathrm{d}x\\
&\quad +\frac{1}{\sigma_{1}}\sum_{k=1}^{\infty}\int_{\mathcal{Q}_{\left\{\mathrm{e}_{1},2^{k-1},2^{k+1}\right\}}} \frac{\psi_{0}(x_{1})}{\langle x_{1}\rangle^{q+\delta}}f(Ax)\,\mathrm{d}x\\
&\leq \underbrace{\frac{c}{\sigma_{1}}\left(1+\left(\frac{2^{2q+\delta}}{2^{\delta}-1}\right)\right)}_{=: \, c(\delta,\sigma,q)},
\end{split}
\end{equation*}
for all $\delta>0.$

Therefore,
\begin{equation*}
\int_{\mathcal{H}_{\{\sigma,0\}}} \frac{f(x)}{\langle \sigma \cdot x\rangle^{q+\delta}}\,\mathrm{d}x\lesssim c(\delta,\sigma,q),\quad \mbox{for all}\quad \delta>0.
\end{equation*}
 \end{proof}
\begin{rem}\label{rem1.1}
At this point 	several issues have  to be emphasized and clarified.
	\begin{itemize}
		\item[(i)]  We shall remark that  the corollary also applies  when integrating a non-negative  function on the set $\mathcal{Q}_{\{\sigma,-(\alpha+\epsilon),-\epsilon\}}$ which implies decay on the  complement of the half-space $\mathcal{H}_{\{\sigma,0\}}.$ 
		\item[(ii)] 	It is also important to emphasize   that the constant $c$  appearing  in \eqref{hyp1} also appears implicitly in \eqref{HYP2},  as it was evidenced in the proof of  corollary \ref{cor1}. 
	\end{itemize}
\end{rem}

\begin{proof}[Proof of Corollary \ref{cor11}]
	Without loss of generality we will assume that $\beta=0$ in Theorem \ref{zk9}. So that, it is clear that 
for $t\in(0,T)$ (fixed),	 and for all $\epsilon>0,$
	\begin{equation*}
	\begin{split}
	\int_{\mathcal{H}_{\{\sigma,\epsilon-\nu t\}}}\left(J^{s}u(x,t)\right)^{2}\,\mathrm{d}x&=\int_{\mathcal{Q}_{\{\sigma,\epsilon-\nu t,\epsilon\}}}\left(J^{s}u(x,t)\right)^{2}\,\mathrm{d}x+\int_{\mathcal{H}_{\{\sigma,\epsilon\}}}\left(J^{s}u(x,t)\right)^{2}\,\mathrm{d}x\\
	&\lesssim c^{*}.
	\end{split}
	\end{equation*}
	Notice that the second term in the r.h.s above   is bounded, to see this it is enough  to take $\nu=0$ in Theorem \ref{zk9}. So that, it only remains to estimate the missing term above.
	
	Since,
	\begin{equation*}
	\begin{split}
	\int_{\mathcal{Q}_{\{\sigma,\epsilon-\nu t,\epsilon\}}}\left(J^{s}u(x,t)\right)^{2}\,\mathrm{d}x&=\int_{\mathcal{Q}_{\{\sigma,-(\epsilon+\nu t) , -\epsilon\}}}\left(J^{s}u\left(x+\left(\frac{2\epsilon}{|\sigma|^{2}}\right)\sigma,t\right)\right)^{2}\,\mathrm{d}x\\
	&\lesssim c^{*}tt^{-1} \quad \mbox{for}\quad t\in(0,T) 
	\end{split}
	\end{equation*}
	and $\nu>0.$
	
	So that,  combining  corollary  \ref{cor1} and the remark \ref{rem1.1} with $\alpha=t$ and $q=s,$  we obtain 
	\begin{equation*}
	\begin{split}
&\int_{\mathcal{H}_{\{\sigma,-\epsilon\}}^{c}}\frac{1}{\left\langle \sigma\cdot x+2\epsilon\right\rangle^{s+\delta}}\left(J^{s}u\left(x+\left(\frac{2\epsilon}{|\sigma|^{2}}\right)\sigma,t\right)\right)^{2}\,\mathrm{d}x\\
&=\int_{\mathcal{H}_{\{\sigma,\epsilon\}}^{c}}\frac{1}{\left\langle \sigma\cdot x\right\rangle^{s+\delta}}\left(J^{s}u\left(x,t\right)\right)^{2}\,\mathrm{d}x\\
&\lesssim_{\delta,s,\sigma} \frac{1}{t}\quad\mbox{for}\quad \delta>0.
\end{split}
	\end{equation*}
	Finally, we gather the estimates above to obtain 
	\begin{equation*}
	\int_{\mathbb{R}^{n}} \frac{1}{\left\langle \left(\sigma\cdot x\right)_{-}\right\rangle^{s+\delta}}\left(J^{s}u\left(x,t\right)\right)^{2}\,\mathrm{d}x\lesssim_{\delta,s,\sigma}\frac{1}{t}\quad \mbox{for all} \quad t\in(0,T).
	\end{equation*}
\end{proof}

\section{Acknowledgment}
I would   would like to    thanks to Prof. Felipe Linares  for call my attention on this problem as well as   its valuable  comments that help to improve a previous version  of this work. 


\end{document}